\newcommand{\bn}{{\mathbf N}}
\newcommand{\bc}{{\mathbb C}}
\newcommand{\br}{{\mathbb R}}
\newcommand{\bs}{{\mathbb S}}
\newcommand{\bz}{{\mathbb Z}}
\newcommand{\bd}{{\mathbb D}}
\newcommand{\bbC}{{\mathbb C}}
\newcommand{\rat}{{\mathcal M}_2}
\newcommand{\calh}{{\mathcal H}}
\newcommand{\calo}{{\mathcal O}}
\newcommand{\per}{\rm Per}
\newcommand{\J}{\mathcal J(f)}
\newcommand{\eps}{\varepsilon}
\renewcommand{\hat}{\widehat}
\newcommand{\orb}{\calo}
\newtheorem{Theorem}{Theorem}[section]
\newtheorem{Corollary}[Theorem]{Corollary}
\newtheorem{Lemma}[Theorem]{Lemma}
\newtheorem{Proposition}[Theorem]{Proposition}
\newtheorem*{thmA}{Theorem A}
\newtheorem*{thmB}{Theorem B}
\newtheorem*{thmC}{Theorem C}
\theoremstyle{definition}
\newtheorem{Remark}[Theorem]{Remark}
\newtheorem{Definition}[Theorem]{Definition}
\newsavebox{\savepar}
\definecolor{green}{rgb}{0,0.7,0.3}
\definecolor{red}{rgb}{0.9,0,0.5}
\title{Sierpi\'{n}ski curve Julia sets for quadratic rational maps}
\author{\small Robert L.~Devaney \\
{\small Department of Mathematics}\\
{\small Boston University}\\
{\small 111 Cummington Street}\\
{\small Boston, MA 02215, USA} \and 
{\small N\'uria Fagella }\\
{\small Dept.~de Matem\`atica Aplicada i An\`alisi}\\
{\small Universitat de Barcelona}\\
{\small Gran Via de les Corts Catalanes, 585}\\
{\small 08007 Barcelona, Spain} \and 
{\small Antonio Garijo}\\
{\small Dept.~d'Eng.~Inform\`atica i Matem\`atiques}\\
{\small Universitat Rovira i Virgili}\\
{\small Av. Pa\"isos Catalans 26}\\
{\small Tarragona 43007, Spain} \and 
{\small Xavier Jarque \thanks{ corresponding autor:   xavier.jarque@ub.edu}}\\
{\small Dept.~de Matem\`atica Aplicada i An\`alisi}\\
{\small Universitat de Barcelona}\\
{\small Gran Via de les Corts Catalanes, 585}\\
{\small 08007 Barcelona, Spain} }
\date{\today}
\begin{document}
\maketitle

\abstract{ We investigate under which dynamical conditions the Julia set of a quadratic rational map  is a Sierpi\'{n}ski curve. }

\section{Introduction}\label{section:intro}

Iteration of rational maps in one complex variable has been widely studied in recent decades continuing the remarkable papers of P. Fatou and G. Julia who introduced normal families and Montel's Theorem to the subject at the beginning of the twentieth century. Indeed, these maps are  the natural family of functions when considering iteration of holomorphic maps on the Riemann sphere $\mathbb {\hat C}$. For a given rational map $f$, the sphere splits into two complementary domains: the {\it Fatou set} $\mathcal F(f)$ where the family of iterates $\{f^n(z)\}_{n\geq 0}$ forms a normal family, and its complement,  the {\it Julia set} $\mathcal(f)$. The Fatou set, when non-empty, is given by the union of, possibly, infinitely many open sets in $\hat{\bbC}$, usually called Fatou components. On the other hand, it is  known that the Julia set is a closed, totally invariant, perfect non-empty set, and coincides with the closure of the set of (repelling) periodic points. For background see \cite{milnor}. 

Unless the Julia set of $f$  fills up the whole sphere, one of the major problems in complex dynamics is to characterize the topology of the Julia set (or at least determine some topological properties) and, if possible, study the chaotic dynamics on this invariant set when iterating the map.  Indeed, depending on $f$, the Julia set can have either trivial topology (for instance just a circle), or a highly rich topology (for instance it may be a  non locally connected continuum, a dendrite, a Cantor set, a Cantor set of circles, etc.) 

The Sierpi\'{n}ski carpet fractal shown in Figure \ref{fig:carpet} is one of the best known planar, compact and connected sets. On the one hand, it is a {\it universal plane continuum} in the sense that it contains a homeomorphic copy of any planar, one-dimensional, compact and connected set. On the other hand, there is a topological characterization of this set due to G. Whyburn \cite{Why} 
which explain why it is not uncommon to find Sierpi\'{n}ski carpet like-sets in complex dynamics

\begin{Theorem}[\cite{Why}, Theorem 3]\label{theorem:why}
Any non-empty planar set that is compact, connected, locally connected, nowhere dense, and has the property that any two complementary domains are bounded by disjoint simple closed curves is homeomorphic to the Sierpi\'{n}ski carpet.
\end{Theorem}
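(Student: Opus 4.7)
The plan is to prove this by comparing the given set $X$ with a concrete model of the Sierpi\'{n}ski carpet $S$ and constructing a homeomorphism of $\hat{\bbC}$ that matches them up complementary-domain by complementary-domain. First I would enumerate the complementary domains of $X$ as $U_0, U_1, U_2, \ldots$ (with $U_0$ the unbounded one) and those of $S$ as $V_0, V_1, V_2, \ldots$, both in order of decreasing diameter. A preliminary observation is that local connectedness together with nowhere density forces $\mathrm{diam}(U_n), \mathrm{diam}(V_n)\to 0$: otherwise one could extract a subsequence of complementary disks of uniformly positive diameter accumulating at some $p \in X$, around which $X$ would fail to be locally connected or would pick up interior points.

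The second step invokes the Schoenflies theorem: any homeomorphism between two simple closed curves in $\hat{\bbC}$ extends to a homeomorphism of their closed complementary disks. This lets me pair $\partial U_n$ with $\partial V_n$ (after a reindexing consistent with the diameter ranking) and build matching homeomorphisms $\overline{U_n}\to \overline{V_n}$. The hypothesis that all the boundary curves are pairwise disjoint is essential here, since it guarantees that each pairing can be performed independently, without having to reconcile shared boundary points between different complementary domains.

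Next I would construct a sequence $h_n\colon \hat{\bbC}\to\hat{\bbC}$ of homeomorphisms inductively. Start with $h_0$ a Schoenflies map sending $\overline{U_0}$ onto $\overline{V_0}$. Having constructed $h_n$, define $h_{n+1}$ to coincide with $h_n$ outside the closed disk in $\hat{\bbC}$ bounded by the simple closed curve $h_n(\partial U_{n+1})$, and redefine it on that disk via Schoenflies so that $U_{n+1}$ is carried exactly onto $V_{n+1}$ (reshuffling is allowed on the already-fixed domains $V_0,\ldots,V_n$ if needed, provided we never touch them after they are placed). Because $\mathrm{diam}(U_n)$ and $\mathrm{diam}(V_n)$ tend to zero, the successive modifications become uniformly small, so $(h_n)$ and $(h_n^{-1})$ are Cauchy in the sup metric on $\hat{\bbC}$ and the limit $h=\lim h_n$ is a homeomorphism of the sphere with $h(X)=S$.

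The main obstacle is the limiting procedure: at each inductive step the Schoenflies extension must be chosen quantitatively, with diameter distortion controlled by $\mathrm{diam}(U_n)$ and $\mathrm{diam}(V_n)$, so that neither $h_n$ nor $h_n^{-1}$ develops pinches in the limit. This control relies on the local connectedness of $X$ and $S$ (which, via a Moore-type decomposition argument, delivers the uniform equicontinuity of the inverse sequence) and on the disjointness of boundary curves, which ensures no two distinct points of $X$ ever get identified. A secondary bookkeeping point is how to break ties in the diameter enumeration; this can be settled by any consistent refinement, because the argument only uses that every $U_n$ is eventually paired with some $V_{\sigma(n)}$ of comparable size, not the specific permutation $\sigma$.
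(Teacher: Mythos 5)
This statement is quoted by the paper from Whyburn's 1958 article and is not proved in the paper at all, so the only meaningful comparison is with Whyburn's actual argument. Your general strategy --- null sequence of complementary domains, Schoenflies extensions, a limit of sphere homeomorphisms --- is in the right spirit, and your preliminary observation that local connectedness forces $\mathrm{diam}(U_n)\to 0$ is a correct and standard first step. But the inductive construction at the heart of your proposal does not work as described, and the gap is structural rather than cosmetic.

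The problem is the local-modification step. You want $h_{n+1}$ to agree with $h_n$ outside the closed disk bounded by $h_n(\partial U_{n+1})$ and to carry $U_{n+1}$ onto $V_{n+1}$. If a sphere homeomorphism agrees with $h_n$ on the complement of $h_n(\overline{U_{n+1}})$, then it necessarily maps $\overline{U_{n+1}}$ onto $h_n(\overline{U_{n+1}})$, so it cannot send $U_{n+1}$ to $V_{n+1}$ unless $h_n$ already did; and if instead you modify $h_n$ on a larger region so as to drag $h_n(U_{n+1})$ over to $V_{n+1}$, that region must contain points of $X$, you lose all control over where those points of $X$ go, and the modification is no longer small just because $\mathrm{diam}(U_{n+1})$ is small --- the relevant diameter is that of the \emph{image} region under $h_n$, for which you have no modulus-of-continuity bound (asserting one is exactly the equicontinuity you are trying to prove, so the argument is circular). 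A related unresolved point is surjectivity: nothing in the greedy diameter-ordered pairing guarantees that every $V_m$ is eventually used, which a genuine back-and-forth would be needed to ensure. Whyburn's proof avoids all of this by reversing the order of operations: he constructs compatible subdivisions of the two carpets into small pieces that are again Sierpi\'nski curves, meeting along arcs and boundary curves, builds the homeomorphism first on the nested $1$-complexes (where uniform control comes from the mesh of the subdivisions going to zero), and only at the end fills in each complementary disk by Schoenflies. Without some version of that subdivision machinery, the limit $h=\lim h_n$ in your scheme cannot be shown to exist, let alone to be injective on $X$.
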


Sets with this property are known as  {\it Sierpi\'{n}ski curves}. Building bridges among complex dynamics and {\it Sierpi\'{n}ski curves}  is the main goal of different studies including this paper. The first example of a (rational) map whose Julia set is a Sierpi\'{n}ski curve is due to  J.~Milnor and L.~Tan  (\cite{MilLei}) in 1992. Their example belongs to   
the family of quadratic rational maps given by $z \mapsto a(z+1/z)+b$. Almost at the same time, in his thesis, K. Pilgrim gave the cubic, critically finite, family of rational maps $z\to c(z-1)^2(z+2)/(3z-2)$ having Sierpi\'{n}ski curve Julia sets for some values of $c$ (for instance $c\approx 0.6956$). Unlike to J. Milnor and T. Lei, who proved their result using polynomial-like maps, K. Pilgrim proved the existence of Sierpis\'nki curve Julia sets  from a systematic study of the contacts among boundaries of Fatou components. 

\begin{figure}[hbt!]
	\centering
	\includegraphics[width=0.5\textwidth]{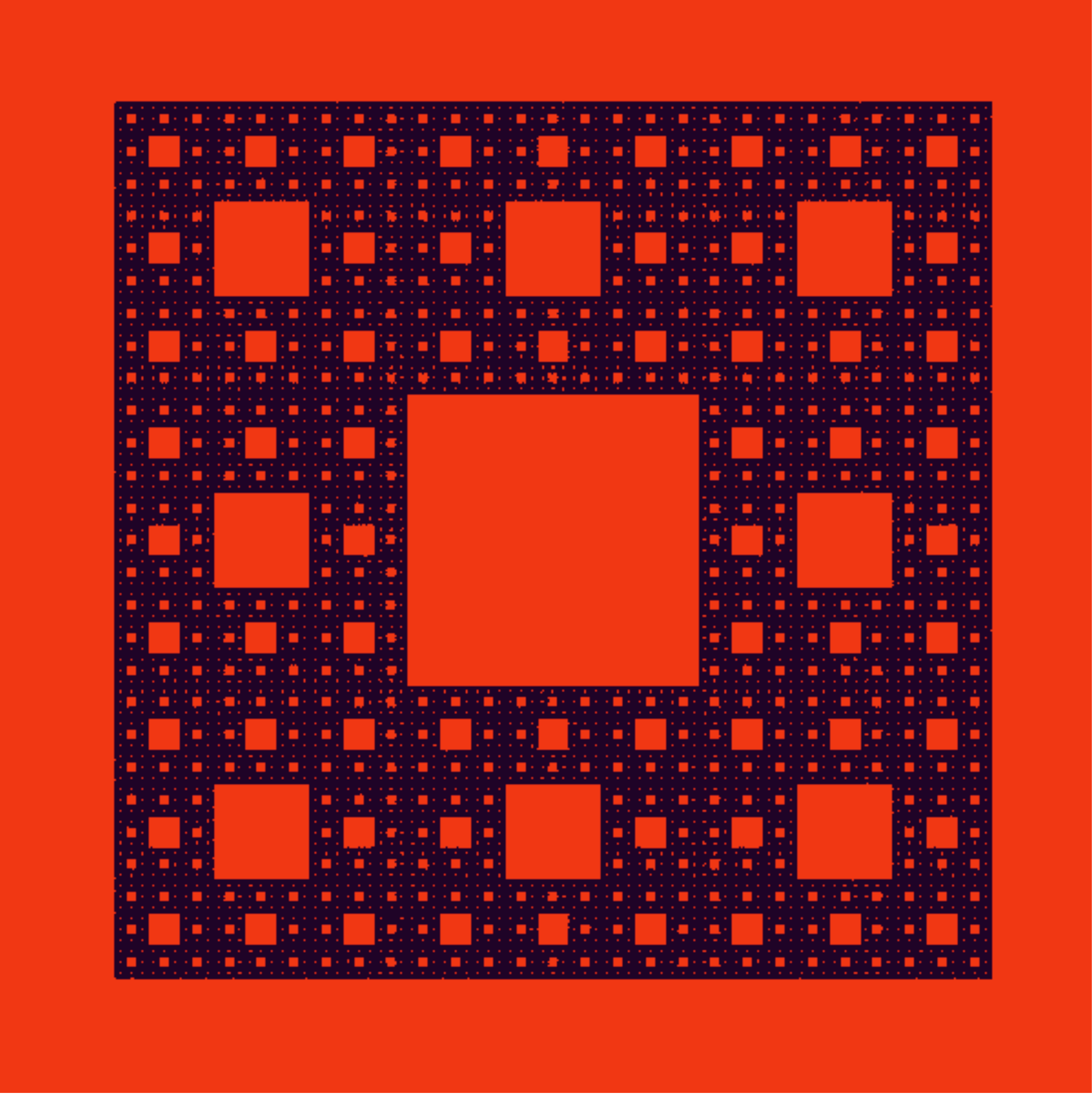}
	\parbox{0.85\textwidth}{\caption{\small The Sierpi\'{n}ski carpet fractal. The black region corresponds the the limit set by taking out the corresponding central  squares.}}
		\label{fig:carpet}
\end{figure}

More recently, other authors have  shown that the Julia sets of  a rational map of
arbitrary degree can be a Sierpi\'{n}ski curve
(\cite{escapet,Stein}). For example, in \cite{escapet},
Sierpi\'{n}ski curve Julia sets were shown to occur in the family  $z \mapsto
z^n + \lambda/z^d$  for some values of  $\lambda$, and, in  \cite{Stein}, for
the rational map $z \mapsto t (1+(4/27)z^3/(1-z))$ also for some values of
$t$.  However, it is not only rational maps that can exhibit  Sierpi\'{n}ski
curve  Julia sets,  as was proven by S. Morosawa in \cite{Moro}. He showed that the  
entire transcendental family $z \mapsto a e^a (z-(1-a)) e^z,$ have Sierpi\'{n}ski curve Julia sets for all $a>1$. Notice that, 
for those maps, the Julia set includes a non-locally connected Cantor bouquet (Cantor set of curves) making this result highly unexpected (see also \cite{GarJarMor} for more details). In Figure
\ref{fig:examples} we show four examples of Sierpi\'{n}ski curve Julia sets, one in each of the  families mentioned above.

\begin{figure}[ht]
    \centering
    \subfigure[\scriptsize{Milnor and Tan Lei's example $ -0.138115091(z+1/z) - 0.303108805 $.}  ]{
     \includegraphics[width=0.4\textwidth]{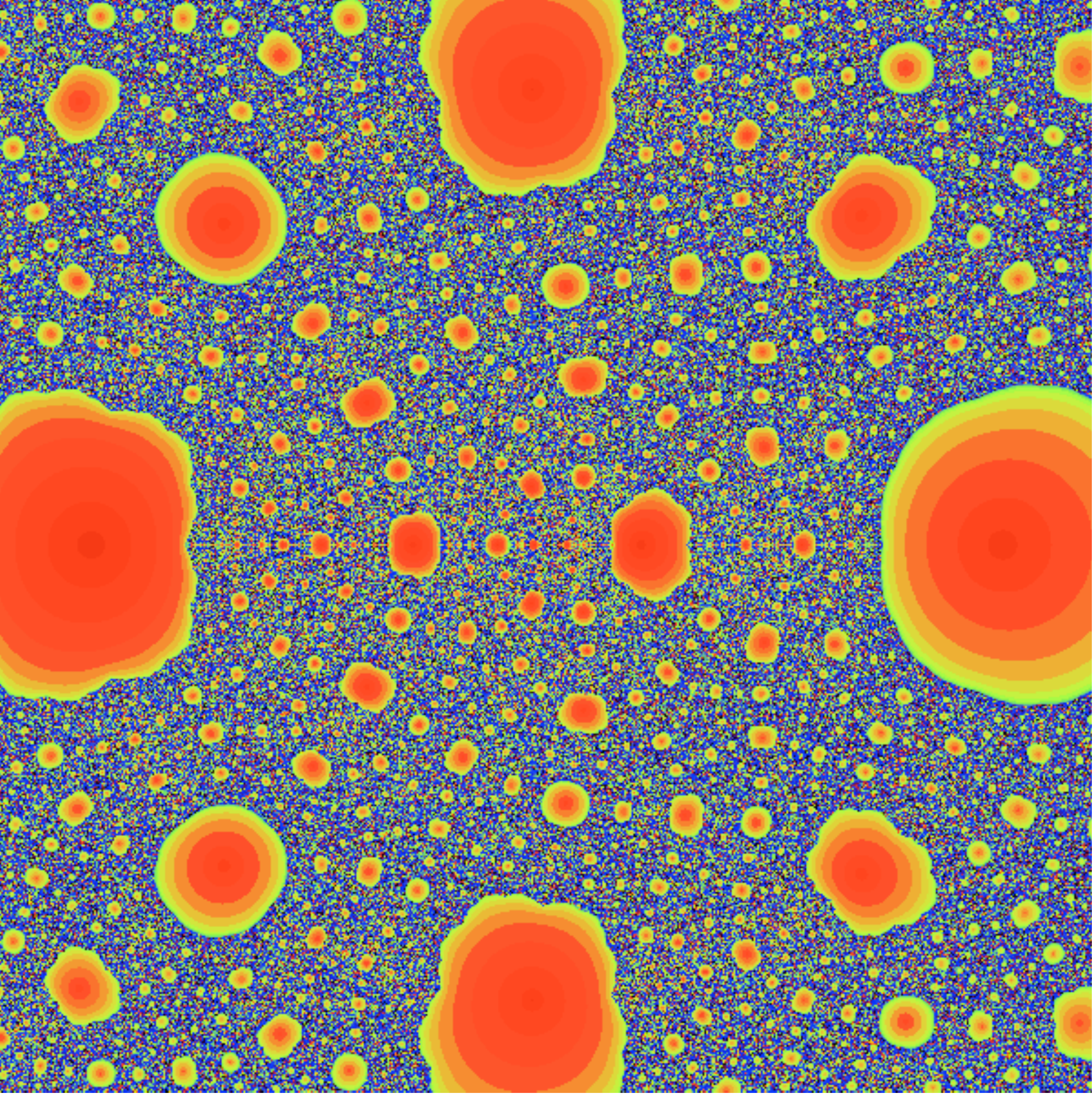}}
    \subfigure[\scriptsize{Devaney's example  $z^2-\frac{1}{16 z^2} $.}  ]{
    \includegraphics[width=0.4\textwidth]{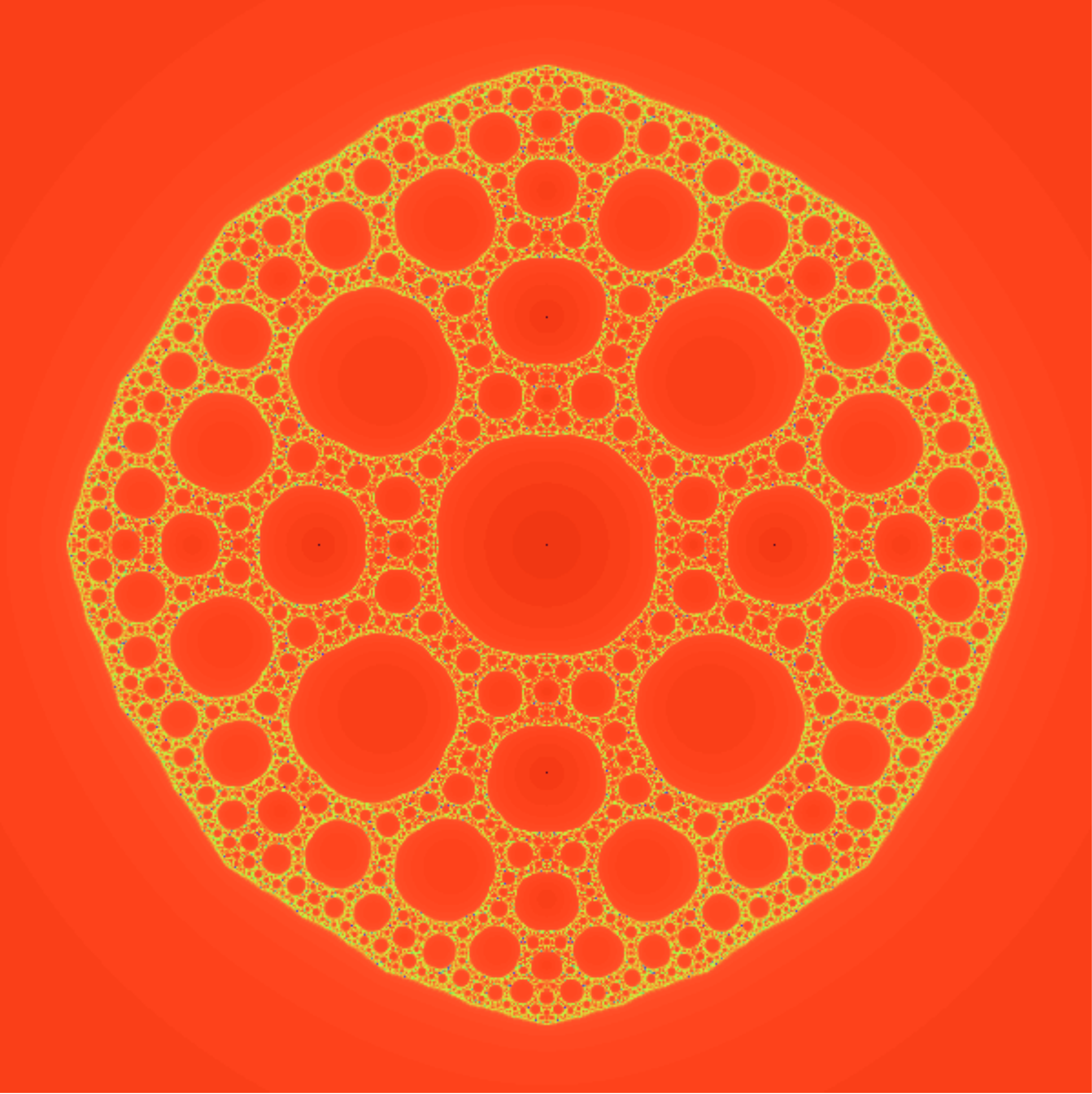}}
    \subfigure[\scriptsize{Steinmetz's example  $1+(4/27)z^3/(1-z)$.}  ]{
     \includegraphics[width=0.4\textwidth]{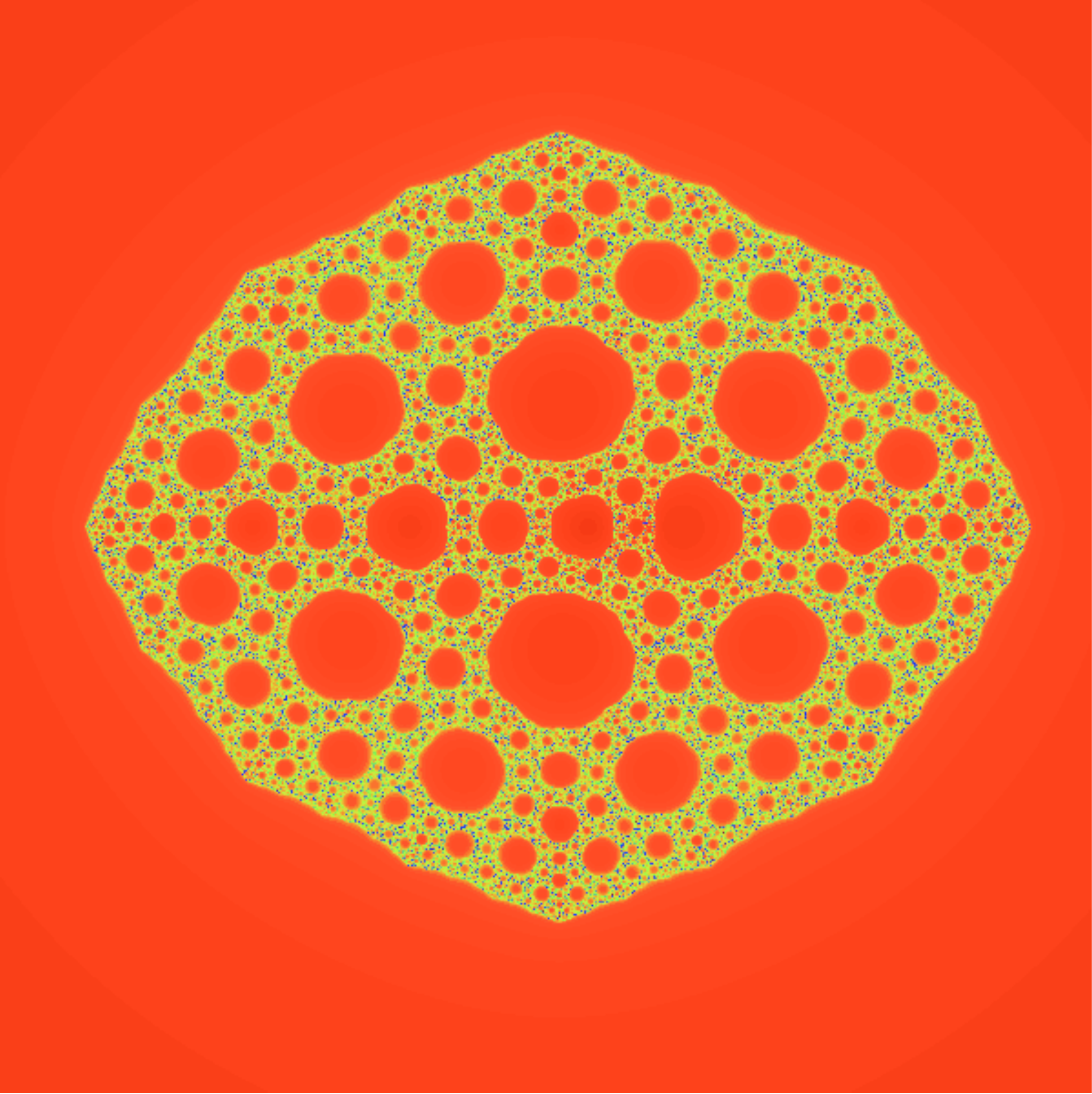}}
    \subfigure[\scriptsize{Morosawa's example $\quad 1.1 (e^z(z-1)+1)$.}  ]{
     \includegraphics[width=0.4\textwidth]{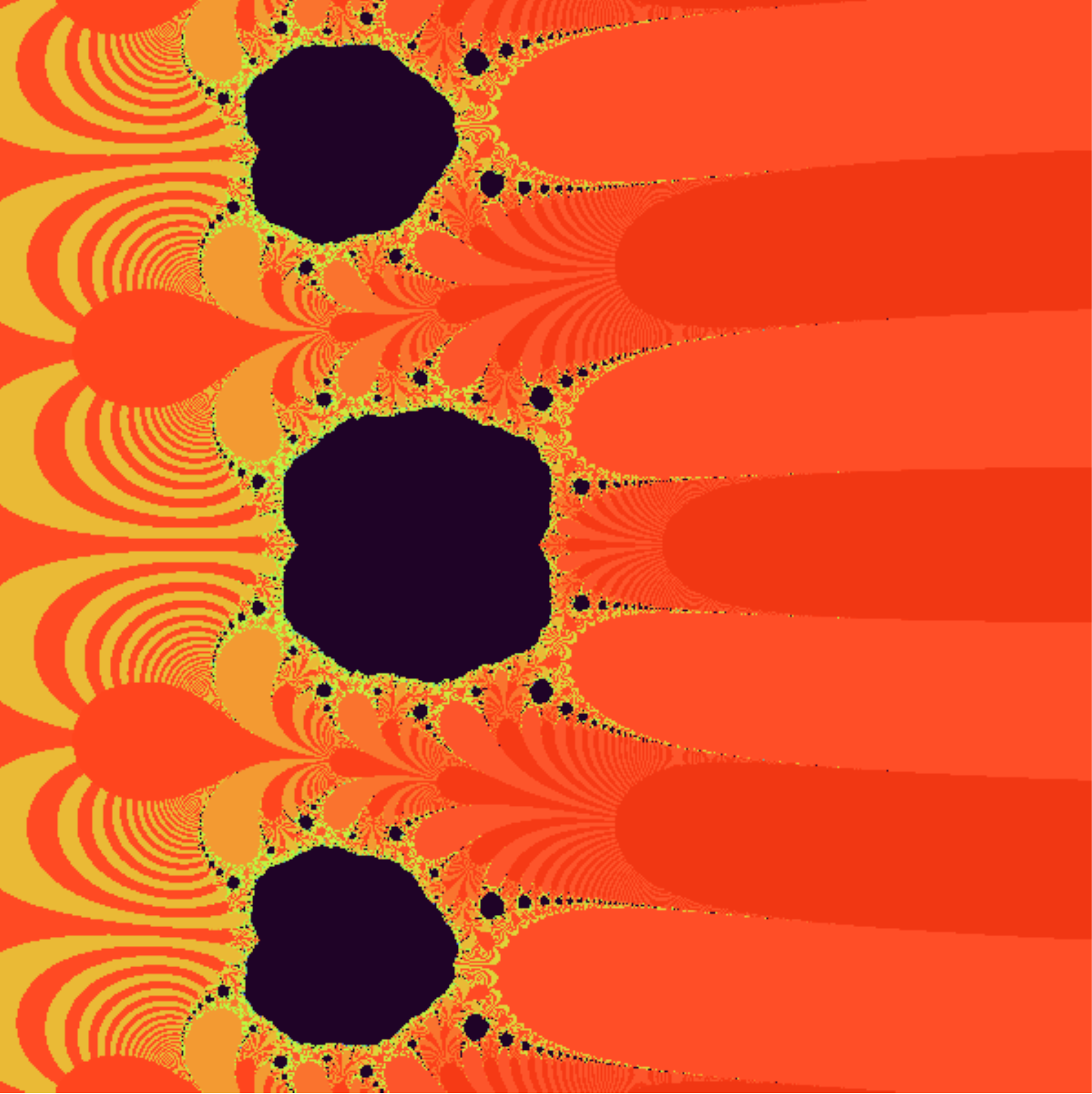}}
    \caption{\small{Two examples of Sierpi\'{n}ski curve Julia sets.}}    
    \label{fig:examples}
    \end{figure}

In this paper we present a more systematic approach to the
problem of existence of Sierpi\'{n}ski curves as Julia sets of rational
maps. In most of the cases mentioned above, the functions at hand have a
basin of attraction of a superattracting periodic orbit, which additionaly captures 
all of the existing critical points. Our goal is to find sufficient  and, if possible, also necessary dynamical conditions under which we can assure that 
the Julia set of a certain rational map is a Sierpi\'{n}ski curve. 

To find  general conditions for all rational maps is a long term
program. In this paper we restrict to rational maps of degree two (quadratic rational maps in what follows) which have an attracting periodic orbit, i.e., those which belong to
$\per_n(\lambda)$ for some $|\lambda|<1$, the multiplier of the attracting periodic
orbit of period $n$. We cannot even characterize all of those, but we cover mainly all of the
hyperbolic cases. To do so, we take advantage of the work of M. Rees \cite {Re0,Re1,Re2}, J. Milnor \cite{MilLei} and K. Pilgrim \cite{Pil} who  deeply studied quadratic rational maps and its parameter space. Indded, the space of all  quadratic rational maps  from the Riemann sphere to itself can be parametrized using 5 complex parameters. However, the space consisting of all conformal conjugacy classes is biholomorphic to  $\bc^2$ \cite{Mil} and will be denoted by $\rat$.

Following \cite{Re0}, hyperbolic  maps in $\rat$ can be classified into four
types {\it A, B, C and D,}  according to the behaviour of their two critical
points:  {\it Adjacent} (type $A$), {\it Bitransitive}  (type $B$), {\it
  Capture}  (type $C$) and {\it Disjoint}  (type D). In the {\it Adjacent}
type, both critical points belong to the same Fatou component; in the {\it Bitransitive} case  the critical points belong to two different Fatou components, both part of the same immediate basin of attraction; in the {\it Capture} type  both critical points belong to the basin of an attracting periodic point but only one of them belongs to the immediate basin; and  finally, in the  {\it Disjoint} type,  the two critical points belong to the attracting basin of two disjoint attracting cycles. 

\begin{figure}[hbt!]
	\centering
	\includegraphics[width=0.5\textwidth]{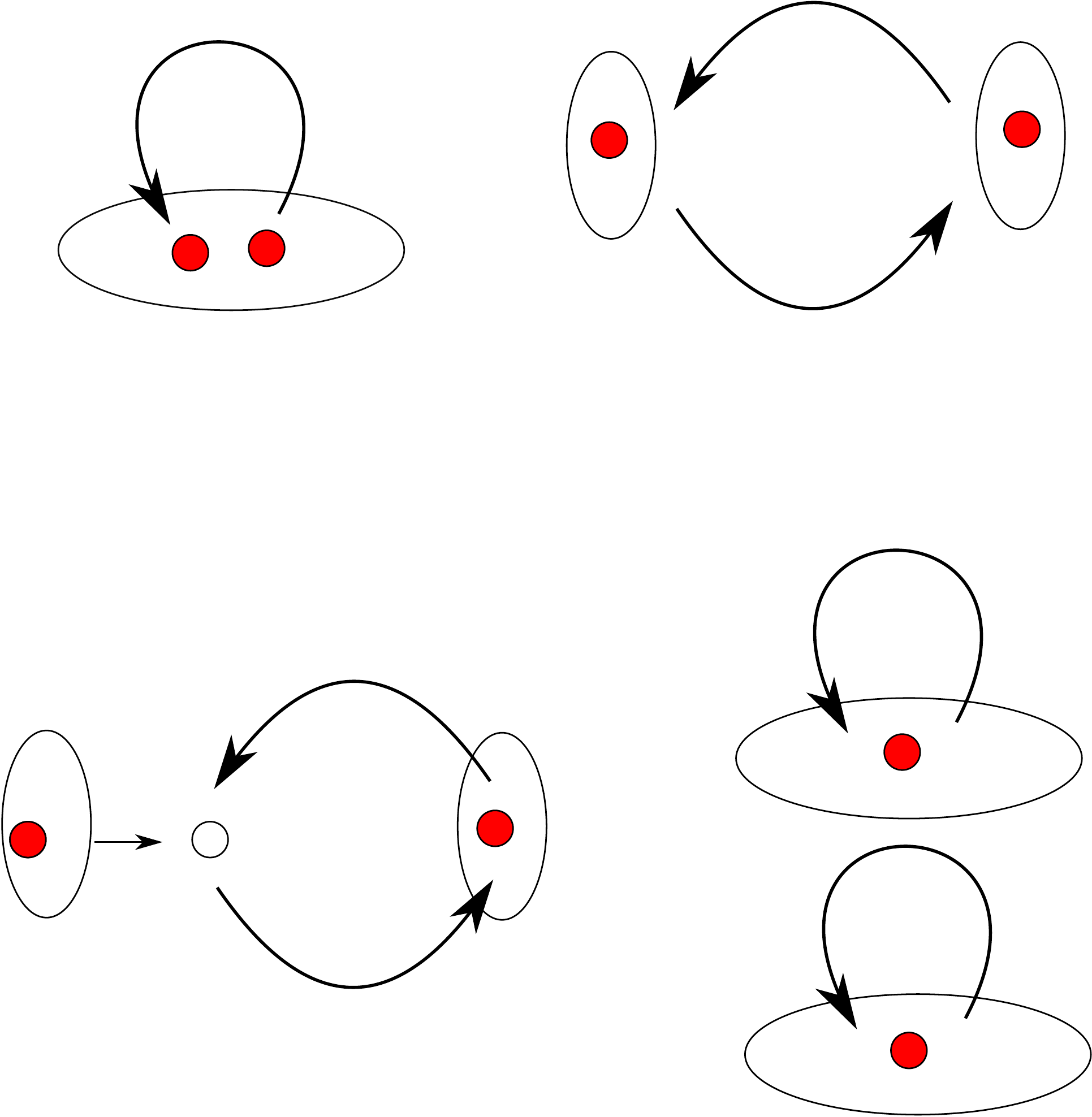}
	\parbox{0.85\textwidth}{\caption{\small Sketch of the different types of hyperbolic maps attending to the behaviour of the critical orbits.}}
		\label{fig:hyper_types}
\end{figure}

In many of our statements we consider one-dimensional complex slices of $\rat$ and in particular to  $\per_n(0)$, for $n\geq 1$. These slices  $\per_n(0)$ contain all the conformal  conjugacy classes of maps with a periodic critical orbit of period $n$. The first slice, $\per_1(0)$, consists of all  quadratic  rational maps having a fixed critical point, which must be superattracting. By sending this point to infinity and the other critical point to 0, we see that all rational maps in this slice are conformally conjugate to a quadratic polynomial of the form  $Q_c(z)=z^2+c$. Consequently, there are no Sierpi\'{n}ski curve Julia sets in $\per_1(0)$, since any Fatou component must share boundary points with the basin of infinity. The second slice, $\per_2(0)$, consists of all quadratic rational maps having a period two critical orbit. Such quadratic rational maps has been investigated  by M. Aspenberg and M. Yampolsky \cite{matings}, where the authors consider the mating between the Basilica with other suitable quadratic polynomials. Among other results they proved that the two Fatou components containing  the period two critical orbit have non-empty intersection. Therefore there are no  Sierpi\'{n}ski curve Julia sets in $\per_2(0)$.  Hence Sierpi\'{n}ski carpets can only appear as Julia sets of maps in  $\per_n(0)$, for $n \geq 3$.

\begin{figure}[hbt]
    \centering
    \subfigure[\scriptsize{Parameter plane of $z^2+c$}  ]{
     \includegraphics[width=0.4\textwidth]{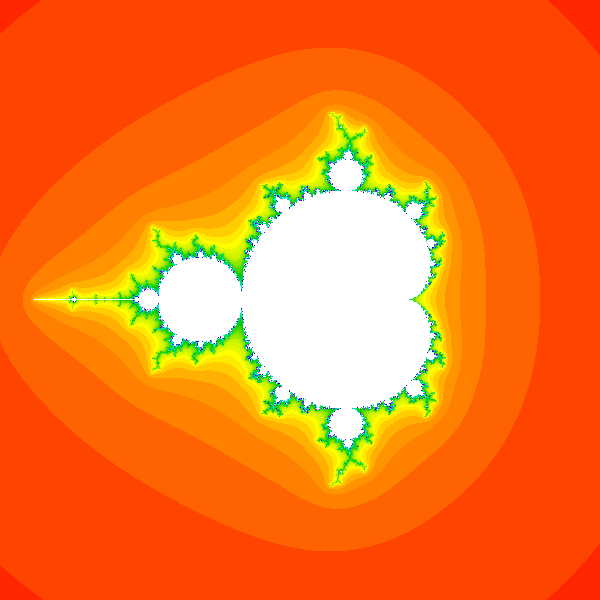}}
    \subfigure[\scriptsize{Parameter plane of $\frac{az-a^2/2}{z^2}$}  ]{
    \includegraphics[width=0.4\textwidth]{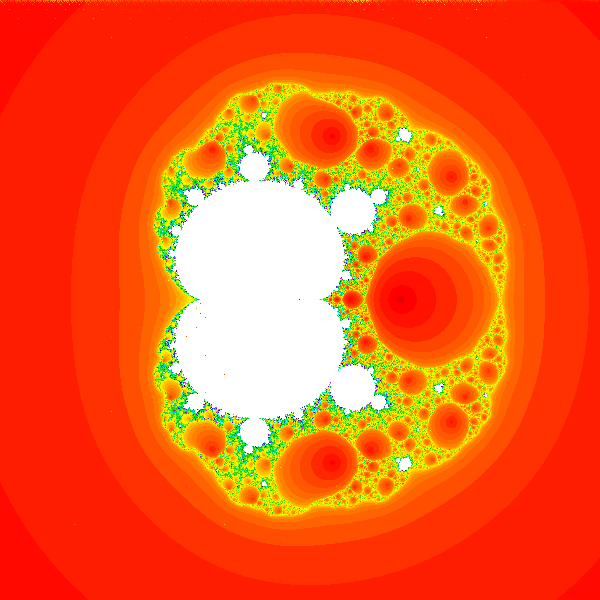}}
    \subfigure[\scriptsize{Parameter plane of   $\frac{(z-1)(z-a/(2-a))}{z^2}$}  ]{
     \includegraphics[width=0.4\textwidth]{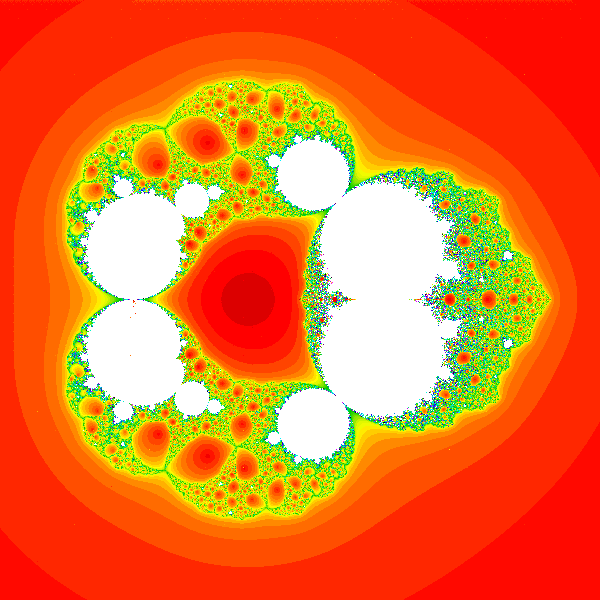}}
    \subfigure[\scriptsize{Parameter plane of  $\frac{(z-a)(z-(2a-1)/(a-1))}{z^2}$}  ]{
     \includegraphics[width=0.4\textwidth]{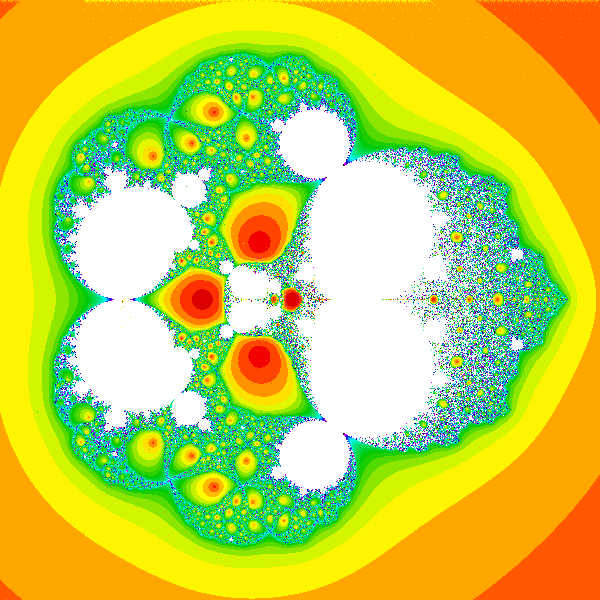}}
    \caption{\small{The slices $\per_1(0), \per_2(0), \per_3(0)$ and $\per_4(0)$}}    
    \label{fig:slices}
\end{figure}

In the hyperbolic setting, when dealing with the topology of the Julia set, restricting  to $\per_n(0)$ is not a  loss of generality. 
Indeed, if $f$ is a hyperbolic rational map of degree two not of type $A$ (we will see later that this is not a relevant restriction), it follows from Rees's Theorem (see Theorem \ref{th:Rees}) that the hyperbolic component $\calh$ which contains $f$ has a unique {\em center} $f_0$, i.e., a map for which all attracting cycles are actually superattracting. In other words, $\calh$ must intersect $\per_n(0)$ for some $n\geq 1$, and this intersection is actually a topological disc. Moreover,  all maps in $\calh$ are conjugate to $f_0$ in a neighborhood of their Julia set (see \cite{mss}). Hence the Julia set of $f_0 \in \per_n(0)$ is a Sierpi\'{n}ski curve if and only if the Julia set of all maps $f\in\calh$ are  Sierpi\'{n}ski curves. This discussion applies in particular, to maps in $\per_n(\lambda)$ with $|\lambda|<1$ of any type $B, C$ and $D$.

We now introduce some terminology in order to state our main results. Let $\lambda, \mu \in \bd$ and $n,m \in \bn$ with $n,m\geq3$. 
Suppose $f\in \per_n(\lambda)$. We denote  by $U$ the immediate basin of attraction of the attracting cycle and  $U_0, U_1, \cdots U_{n-1}$ the Fatou components which form the immediate basin of the attracting cycle. If $f \in \per_n(\lambda) \cap \per_m(\mu)$  then we denote by $U$ and $V$ the immediate basin of attraction of the two attracting cycles, and we denote by $U_0, U_1, \cdots U_{n-1}$ and $V_0, V_1, \cdots V_{m-1}$ the corresponding Fatou components.

As we mentioned before, our goal in this paper is to obtain {\it dynamical} conditions that ensure that the Julia set of a quadratic rational map is a  Sierpi\'{n}ski curve. The first requirement for a quadratic rational map to have a Sierpi\'{n}ski curve Julia set is that the map is hyperbolic. Using the hyperbolicity of the map and previous results of other authors the problem reduces to study the  contact between Fatou components. 

\begin{thmA} Let $n \geq 3$, and let $f \in \rat$ be a  hyperbolic map in $\per_n(\lambda)$ without (super) attracting fixed points. The following conditions hold.
\begin{enumerate}

\item[(a)] If $f$ is  of type $C$ or $D$ and $i \neq m$,  then $\partial U_{i}  \cap \partial U_m$ is either empty or reduces to a unique point $p$ satisfying $f^j(p)=p$, for some $1\leq j<n$ a divisor of $n$. 
\item[(b)]  Let $f$ be of type $D$, and  $f \in \per_n(\lambda) \cap \per_m(\mu)$  such that $gcd(n,m)=1$. Assume that  $\partial U_{i_1}  \cap \partial U_{i_2}= \emptyset$ for $0\leq i_1 < i_2 \leq n-1$ and $\partial V_{j_1}  \cap \partial V_{j_2}= \emptyset$ for $0 \leq j_1 < j_2 \leq m-1$.  Then $\partial U_i \cap \partial V_j = \emptyset$, for $0 \leq i \leq n-1$ and $0 \leq j \leq m-1$. 
\end{enumerate}
\end{thmA}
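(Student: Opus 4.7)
Both parts rest on the same structural input. By hyperbolicity, $\J$ is locally connected and $f$ is uniformly expanding on $\J$. In types $C$ and $D$ each component $U_k$ of the immediate basin contains at most one critical point, so Riemann--Hurwitz gives $U_k$ simply connected; combined with local connectedness and Carath\'eodory's theorem, every $\partial U_k$ (and every $\partial V_\ell$ in part (b)) is a Jordan curve.

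For part (a), set $K:=\partial U_i\cap\partial U_m$ with $i\neq m$; the cyclic action of $f$ on the immediate basin gives $f^n(K)\subseteq K$. The proper degree-two map $f^n\colon U_i\to U_i$ is conformally conjugate, via the Riemann map, to a degree-two Blaschke product $B$ on $\bd$ fixing $0$, and Carath\'eodory extends the conjugacy to the boundary; an explicit calculation shows $B|_{S^1}$ has a unique fixed point. The main obstacle is upgrading this to uniqueness of $K$ itself. The plan is: first, rule out $K=\partial U_i$, since this would force $\partial U_i=\partial U_m$ and leave no room on the sphere for the remaining $n-2$ components of the cycle; then, use expansion to reduce to the eventual periodic core $\bigcap_{k\geq 0} f^{nk}(K)$, whose points are all $f^n$-periodic. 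Two distinct periodic points $p,q$ in $K$ would be connected by disjoint internal arcs $\alpha\subset U_i$ and $\beta\subset U_m$, forming a Jordan curve $\gamma=\alpha\cup\beta$ whose complementary disks cannot accommodate the other components $U_k$ in a manner compatible both with the cyclic permutation $U_k\mapsto U_{k+1}$ and with the forced location of the critical orbit, giving the desired contradiction. Once $K=\{p\}$, one has $f^n(p)=p$, so $p$ has period $j\mid n$; the bound $j<n$ follows because the set $\{k:p\in\partial U_k\}$ contains $\{i,m\}$ and is stabilised by the shift $k\mapsto k+j\pmod n$, so this shift is a non-trivial rotation of $\bz/n\bz$, forcing $j$ to be a proper divisor.

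For part (b), the argument is a planarity obstruction. Suppose for contradiction that $p\in\partial U_i\cap\partial V_j$ for some indices. Then $f^k(p)\in\partial U_{(i+k)\bmod n}\cap\partial V_{(j+k)\bmod m}$, and since $\gcd(n,m)=1$, the Chinese Remainder Theorem guarantees that $\bigl((i+k)\bmod n,(j+k)\bmod m\bigr)$ enumerates all of $(\bz/n\bz)\times(\bz/m\bz)$ as $k$ varies. Hence every intersection $\partial U_{i'}\cap\partial V_{j'}$ is non-empty; pick a contact point $c_{i',j'}$ in each. Pairwise disjointness of the $\partial U$'s (resp.\ $\partial V$'s) promotes to pairwise disjointness of the closed Jordan disks $\bar U_{i'}$ (resp.\ $\bar V_{j'}$), so the $nm$ contact points are all distinct. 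Choosing interior base points $a_{i'}\in U_{i'}$ and $b_{j'}\in V_{j'}$ and drawing, inside each Jordan disk, pairwise disjoint internal arcs from the base point to the relevant $c_{i',j'}$'s, the concatenations yield a topological embedding of the complete bipartite graph $K_{n,m}$ in $\hat{\bbC}$. Since $n,m\geq 3$, $K_{n,m}$ contains $K_{3,3}$ as a subgraph, which is non-planar by Kuratowski's theorem, a contradiction.
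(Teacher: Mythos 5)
Part (b) of your proposal is essentially the paper's argument: the paper also iterates $f^n$ to show (under $\gcd(n,m)=1$ and the disjointness hypotheses) that every $\partial U_i$ meets every $\partial V_j$, and then derives a contradiction by joining centers $u_0,u_1,u_2$ to $v_0,v_1,v_2$ through internal rays ending at the contact points --- which is precisely the classical hand-drawn proof that $K_{3,3}$ is non-planar. Your invocation of Kuratowski is a legitimate packaging of the same obstruction, and your checks (distinctness of the $nm$ contact points, disjointness of the internal arcs inside each Jordan domain) go through. One smaller caveat that applies to both parts: local connectivity of $\J$ plus Carath\'eodory only gives a \emph{continuous} extension of the Riemann map, hence that $\partial U_k$ is a curve; that it is a \emph{simple} closed curve is Pilgrim's theorem on critically finite maps with two critical points, which the paper imports explicitly and which you cannot get from Riemann--Hurwitz and local connectivity alone.

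Part (a) has a genuine gap at its core. You must show that \emph{every} point of $K=\partial U_i\cap\partial U_m$ is fixed by $f^n$, and your reduction to ``the eventual periodic core $\bigcap_{k\geq 0}f^{nk}(K)$, whose points are all $f^n$-periodic'' does not do this: a compact set with $f^n(K)\subseteq K$ need not consist of (eventually) periodic points, expansion on $\J$ notwithstanding, and nothing you say rules out a point of $K$ whose $f^n$-orbit is an infinite subset of $K$, or a periodic point of period $\geq 2$ under $f^n$. This is exactly where the paper's technical work lives: it shows that a non-fixed $p\in\partial U_0\cap\partial U_j$ forces, after applying $f^j$, either three Jordan domains sharing too many boundary points (excluded by a separation lemma using internal rays), or two distinct orbits of the doubling map on $\partial U_j$ that must be ``mixed'' combinatorially (a lemma about the doubling map), again contradicting planarity; the symmetric case $2j=n$ needs a separate monotonicity argument on the circle forcing convergence to the repelling fixed angle $0$. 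None of these mechanisms appears in your sketch, and your fallback --- that two distinct periodic points $p,q\in K$ yield a Jordan curve whose complementary disks ``cannot accommodate'' the remaining $U_k$ --- is an assertion, not an argument (for $n$ large there is plenty of room for the other components in either disk). Finally, your derivation of $j<n$ is circular: the shift $k\mapsto k+j$ on $\bz/n\bz$ is trivial when $j=n$, so it does not force $j$ to be a proper divisor; the correct reason is that $j=n$ would place two distinct fixed points of $f^n$ (namely $p$ and $f^m(p)$) on a single $\partial U_k$, contradicting the uniqueness of the boundary fixed point of the degree-two Blaschke model that you yourself established.
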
 

Now we apply the above result in order to investigate when a hyperbolic rational map has a Sierpi\'{n}ski curve Julia set. The first statement of Theorem B follows from Lemma 8.2  in \cite{MilLei} but we include it here for completeness. 

\begin{thmB}
Let $n \geq 1$ and let  $f \in \rat$. Assume that $f \in \per_n(\lambda)$ is a hyperbolic map. Then the following statements hold.
\begin{enumerate}
\item[(a)] If $f$ is of type $A$ (Adjacent) then $\J$  is not a Sierpi\'{n}ski curve.
\item[(b)]  If $f$ is  of type $B$ (Bitransitive) and $n=1,2,3,4$ then  $\J$ is not a Sierpi\'{n}ski curve.
\item[(c)] If  $f$ is  of type $C$ (Capture), $n\geq3$ and $\partial U$ does not contain any fixed point of $f^j$ for $j\mid n$ and $j<n$ then $\J$ is a Sierpi\'{n}ski curve.
\item[(d)] Suppose $f$ is  of type $D$ (Disjoint) and  $n,m\geq 3$ with $gcd(n,m)=1$. If   $\partial U$ does not contain any fixed point of $f^j$ for $j \mid n$, $j<n$ and  $\partial V$ does not contain any fixed point of $f^j$ for $j \mid m$, $j<m$,   then $\J$ is a Sierpi\'{n}ski curve.
\end{enumerate}
\end{thmB}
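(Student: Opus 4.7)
The plan is to treat parts (a) and (b) by exhibiting topological obstructions that prevent Whyburn's characterization (Theorem \ref{theorem:why}) from applying, and to treat parts (c) and (d) by verifying all five Whyburn conditions, using Theorem A to supply the crucial disjoint-boundary property.

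For part (a), I would invoke Lemma 8.2 of \cite{MilLei} directly, as the paper's own remark suggests: when both critical points lie in the same Fatou component $U_0$, the degree and branching of the first-return map $f^n|_{U_0}$ force $\partial U_0$ to touch the boundary of another Fatou component at more than one point, which contradicts Whyburn's disjointness condition. For part (b), I would split into three subcases. The case $n=1$ is vacuous because bitransitivity requires at least two components in the cycle. The case $n=2$ follows from the Aspenberg--Yampolsky result cited in Section \ref{section:intro}: the two Fatou components of the period-$2$ critical cycle intersect. For $n=3,4$ I would carry out a combinatorial case analysis. Exactly two of the $n$ cycle components carry a critical point, so the degree sequence of $f$ along the cycle is determined up to cyclic permutation, and the first-return $f^n|_{U_i}$ is a degree-$4$ Blaschke product on each component; pigeonholing the landing points of the low-period fixed points of $f^j$ ($j\mid n$, $j<n$) on the boundaries $\partial U_i$ produces an unavoidable contact between two such boundaries.

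For part (c), let $f\in\per_n(\lambda)$ be a hyperbolic type $C$ map with immediate basin cycle $U_0,\dots,U_{n-1}$. Theorem A(a) says that for $i\neq j$, the intersection $\partial U_i\cap\partial U_j$ is either empty or consists of a single fixed point of $f^\ell$ for some $\ell\mid n$ with $\ell<n$; the hypothesis on $\partial U$ rules out the second possibility, so the $\partial U_i$'s are pairwise disjoint. All other Fatou components are iterated preimages of the $U_i$'s, and since $f$ is a degree-$2$ branched covering, preimages of disjoint Jordan curves (whose images avoid the critical values, which lie in the interior of the Fatou set) are again disjoint Jordan curves; by induction on the preperiod, every pair of Fatou components has disjoint Jordan-curve boundaries. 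The remaining Whyburn conditions --- non-emptiness, compactness, connectedness, local connectedness, and nowhere denseness --- are standard for hyperbolic rational maps all of whose critical points lie in a single attracting basin; in particular local connectedness of $\J$ together with the B\"{o}ttcher coordinates of the periodic components give that each component's boundary is itself a Jordan curve. Whyburn's theorem then yields that $\J$ is a Sierpi\'{n}ski curve. Part (d) follows by the same scheme: Theorem A(a), applied to each cycle, eliminates intra-cycle contacts via the hypotheses on $\partial U$ and on $\partial V$, and Theorem A(b), applicable because $\gcd(n,m)=1$, eliminates the cross contacts $\partial U_i \cap \partial V_j$.

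The main obstacle is the combinatorial analysis of part (b) for $n=3,4$. Parts (a), (c), and (d) reduce cleanly to earlier results or directly to Theorem A, but for bitransitive maps with small periods I must show that regardless of how the two critical orbits distribute along the cycle an unavoidable boundary contact arises. I expect to proceed by studying the permutation action of $f$ on the cyclically-ordered boundaries $\partial U_i$, counting the possible landing points of low-period fixed points of $f^j$, and arguing that for small $n$ there is simply not enough combinatorial room for all the Jordan boundaries to remain pairwise disjoint.
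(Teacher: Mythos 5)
Your treatment of parts (c) and (d) matches the paper's: Lemma \ref{lemma:sierpins} supplies all Whyburn conditions except boundary disjointness, Theorem A(a) (with the hypothesis on $\partial U$, resp.\ $\partial V$) kills contacts within each cycle, Theorem A(b) kills cross contacts in the disjoint case, and hyperbolicity (no critical points on $\J$) propagates disjointness to all preimage components. Those two parts are fine. Part (a), however, is argued by the wrong mechanism. The actual obstruction is not a boundary contact: one first shows (as in Lemma \ref{lemma:sierpins}) that a type $A$ map cannot have its doubly-critical component in a cycle of period $\geq 2$ (else $f:U\to f(U)$ would have degree $3$ on a simply connected domain, contradicting global degree $2$), so the attracting cycle is a fixed point attracting both critical orbits; then Theorem \ref{th:Milnor} gives that $\J$ is \emph{totally disconnected}, and connectedness --- not the disjoint-boundary condition --- is the Whyburn axiom that fails. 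Your description of $\partial U_0$ "touching another component at more than one point" does not correspond to anything that happens here (in the totally disconnected case the Fatou set is a single component).

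The genuine gap is part (b) for $n=3,4$. You correctly identify that $f^n|_{\overline{U_i}}$ has degree $4$, but "pigeonholing the landing points of the low-period fixed points of $f^j$" is backwards and does not close the argument: fixed points of $f^j$ for $j<n$ have no a priori reason to lie on any $\partial U_i$, so there is nothing to pigeonhole. The paper's argument runs in the opposite direction. Since $f^n|_{\partial U_i}$ is conjugate to $\theta\mapsto 4\theta$, each $\partial U_i$ carries exactly \emph{three} fixed points of $f^n$; each such point has exact period $d\mid n$ under $f$, and if $d<n$ for any of them, that point lies on $\partial U_i\cap\partial U_{i+d}$ and we are done. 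The decisive input for ruling out the remaining case ($d=n$ for all of them) is a \emph{global count of period-$n$ cycles of a quadratic rational map}: the $3n$ boundary points would form $3$ distinct $n$-cycles, which together with the superattracting cycle gives at least $4$ cycles of period $n$, whereas a quadratic rational map has at most $2$ cycles of period $3$ and at most $3$ of period $4$. This counting step is exactly what makes the statement work for $n=3,4$ (and fail as an argument for larger $n$), and it is absent from your sketch; "not enough combinatorial room for the Jordan boundaries to remain disjoint" is not a substitute for it.
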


As an application of Theorems A and B we can make a fairly complete study of
$\per_3(0)$ (with its extensions mentioned above).  According to Rees
\cite{Re3} it is possible to partition the one-dimensional slice into five
pieces, each with different dynamics. 
In Figure \ref{fig:slice3_cero} we display this partition, which we shall explain in detail in Section \ref{sec:slice3}. Two and only two of the pieces, $B_1$ and $B_\infty$, are hyperbolic components of type $B$ (Bitransitive). The regions $\Omega_1$, $\Omega_2$ and $\Omega_3$  contain all hyperbolic components of type $C$ (Capture) and $D$ (Disjoint) and, of course, all non--hyperbolic parameters. We can prove the following. 

\begin{thmC} Let $f\in\per_3(0)$. Then,
\begin{enumerate} 
\item[(a)] If $a\in B_1 \cup B_\infty$ then $\mathcal J \left(f_a\right)$ is  not a Sierpi\'{n}ski curve. 
\item[(b)] If  $a\in \Omega_2 \cup \Omega_3$  then $\mathcal J\left(f_a\right)$ is  not a Sierpi\'{n}ski curve.
\item[(c)] If $a \in \Omega_1$  is a type C parameter,  then $\mathcal J\left(f_a\right)$ is  a Sierpi\'{n}ski curve. 
\item[(d)] If $a \in \Omega_1$ is a disjoint parameter and $\partial V$ does not contain a fixed point of $f^j$ for $j \mid m$, $j<m$ and $3 \nmid m$ then $\J$ is a Sierpi\'{n}ski curve.
\end{enumerate} 
\end{thmC}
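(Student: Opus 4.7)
The approach is to apply Theorem~B to each subcase, using the dynamical description of the Rees partition of $\per_3(0)$ developed in Section~\ref{sec:slice3} to verify its hypotheses.

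Part~(a) is immediate from Theorem~B(b), since by construction $B_1$ and $B_\infty$ consist of bitransitive hyperbolic maps with $n=3$, one of the periods explicitly excluded there.

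For parts~(c) and~(d), observe that the only divisor of $n=3$ strictly less than $n$ is $j=1$, so the hypothesis ``$\partial U$ contains no fixed point of $f^j$ for $j\mid 3$, $j<3$'' in Theorem~B collapses to the single requirement that no $f_a$-fixed point lies on $\partial U$. I would establish this for every $a\in\Omega_1$ directly from the geometric description of $\Omega_1$ given in Section~\ref{sec:slice3}: $\Omega_1$ is precisely the piece of the Rees partition in which both repelling fixed points of $f_a$ are separated from $U$ by the boundaries $\partial U_0, \partial U_1, \partial U_2$. With this verified, Theorem~B(c) yields~(c) immediately; for~(d) the remaining hypothesis on $\partial V$ is supplied by assumption and the coprimality requirement $\gcd(3,m)=1$ is precisely $3\nmid m$, so Theorem~B(d) applies.

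Part~(b) requires the opposite conclusion, and is the substantive step. I plan to show that for every $a\in\Omega_2\cup\Omega_3$ at least one repelling fixed point of $f_a$ lies on $\partial U$, again by unfolding the combinatorial definition of the pieces $\Omega_2$ and $\Omega_3$. For hyperbolic parameters of type $C$ or $D$, Theorem~A(a) then forces two distinct components $U_i, U_j$ of the immediate basin to share this fixed point on their common boundary, so the complementary domains of $\J$ are not bounded by disjoint simple closed curves. For non-hyperbolic parameters inside $\Omega_2\cup\Omega_3$, either the same fixed-point contact persists by continuity of the landing of external rays at repelling fixed points, or $\J$ fails the Sierpi\'nski curve criterion for an independent reason (a parabolic cycle producing contact of Fatou components, or failure of local connectivity/nowhere-denseness).

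The main obstacle I anticipate is the accessibility bookkeeping underlying both~(b) and the $\Omega_1$ verification in~(c)--(d): one must read off, from the combinatorial data attached to each Rees piece, whether a given repelling fixed point is the landing point of an external ray reaching $U$ and thus sits on $\partial U$. Once this classification is carried out piece by piece, the roles of Theorems~A and~B reduce to essentially formal applications.
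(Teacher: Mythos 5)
Your overall architecture --- reduce each case to Theorem B (or to a direct boundary contact) by determining, piece by piece, whether a fixed point of $f_a$ lies on $\partial U$ --- is exactly the paper's, and parts (a) and (d) are handled the same way in both. The problem is that the heart of the matter, the classification of the fixed-point configuration on $\Omega_1$ versus $\Omega_2\cup\Omega_3$, is asserted rather than proved: you write that you ``would establish'' that $\Omega_1$ is the piece where no fixed point meets $\partial U$ ``directly from the geometric description of $\Omega_1$,'' and you defer the analogous claim for $\Omega_2,\Omega_3$ to ``accessibility bookkeeping'' of ``combinatorial data attached to each Rees piece.'' Rees's description of the pieces (Proposition \ref{Pro:Rees}) does not hand you this information; it is precisely what Section \ref{sec:slice3} has to prove. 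The paper's mechanism is concrete: each $\Omega_i$ contains a unique polynomial parameter (airplane in $\Omega_1$, rabbit in $\Omega_2$, co-rabbit in $\Omega_3$), for which the configuration of the unique point $z_0(a)\in\partial U_0$ fixed by $f_a^3$ is known (a repelling $3$-cycle with $\partial U_0\cap\partial U_\infty\cap\partial U_1=\emptyset$ for the airplane; a repelling fixed point lying in $\partial U_0\cap\partial U_\infty\cap\partial U_1$ for the rabbit and co-rabbit); and this configuration can only change when the unique repelling $3$-cycle collides with a fixed point, i.e.\ at a parabolic parameter of multiplier $e^{\pm 2\pi i/3}$. Proposition \ref{lem:delta1} shows these parameters are exactly $x,\bar x\in\partial\Delta_1$, which lie on the boundary between $B_1$, $B_\infty$ and $\Omega_1$ and not inside $\Omega_2$ or $\Omega_3$, so the configuration is constant on each piece. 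Without this collision/bifurcation argument (or some substitute for it), parts (b) and (c) are not actually proved.

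A smaller point: in (b) you invoke Theorem A(a) to conclude that a fixed point on $\partial U$ forces two components $U_i,U_j$ to share it. Theorem A(a) is the implication in the other direction (a nonempty intersection $\partial U_i\cap\partial U_m$ must be a fixed point of some iterate). The correct, and elementary, reason is forward invariance: if $p=f_a(p)\in\partial U_i$ then $p\in f_a(\partial U_i)=\partial U_{i+1}$, so $p$ lies on the boundary of every component of the cycle. With that repaired, your treatment of (b) for hyperbolic parameters is fine, but the non-hyperbolic case is dispatched with ``or $\mathcal J(f_a)$ fails the criterion for an independent reason,'' which is not an argument; the paper gets all of $\Omega_2\cup\Omega_3$ at once because the persistent fixed point $z_0(a)$ remains in $\partial U_0\cap\partial U_\infty\cap\partial U_1$ for every parameter in the piece, hyperbolic or not, until it bifurcates at $x$ or $\bar x$.
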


\begin{Remark} 
As mentioned above, if $f$ is hyperbolic, these properties  extend to all maps in the hyperbolic component in $\mathcal{M}_2$ which $f$ belongs to.
\end{Remark}

\begin{Remark}
Theorem C illustrates that in fact  when $n$ is a prime number, the conditions of Theorems A  and B reduce to study 
the location of the three fixed points of $f$. So, for those values of $n$ a deep study in parameter space is plausible.    
\end{Remark}

\begin{figure}[ht]
	\centering
	\includegraphics[width=0.5\textwidth]{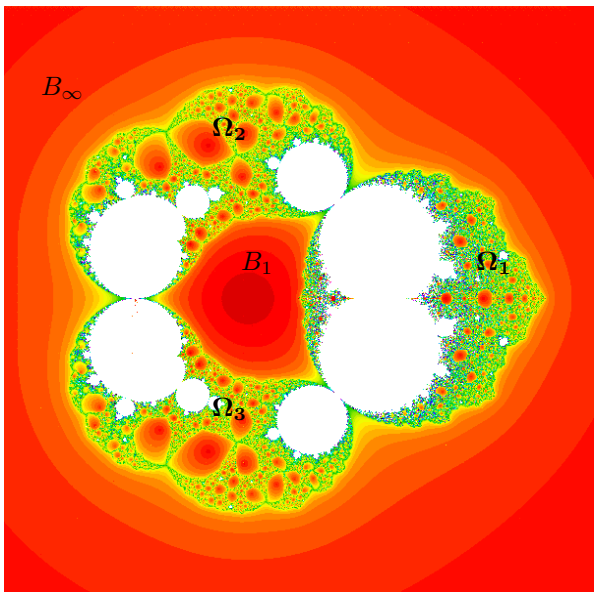}
	\setlength{\unitlength}{220pt}
	\put(-0.6,0.55){\small $B_1$}
	\put(-0.94,0.85){\small $B_\infty$}
	\put(-0.65,0.78){\small $\bf\Omega_2$}
	\put(-0.65,0.3){\small $\bf\Omega_3$}
	\put(-0.2,0.55){\small $\bf\Omega_1$}
	\caption{\small The slice $Per_3(0)$ and its pieces.}
	\label{fig:slice3_cero}
\end{figure}

The outline of the paper is as follows: in Section \ref{sec:previous} we give previous results concerning the topology of the Julia set of quadratic  hyperbolic rational maps.  In Section \ref{sec:sufficient} we concentrate on the contacts between boundaries of Fatou components.  In Section \ref{sec: proof_A_B} we prove Theorems A and B. Finally,
 in \ref{sec:slice3} we study the slice $Per_3(0)$ and prove Theorem C.

\bigskip
\noindent \emph{Acknowledgments.} We are grateful to the referee for many helpful comments and observations which helped us improve the paper. We also thank S\'ebastien Godillon for helpful discussions. The second, third and fourth authors are partially supported by the Catalan
grant 2009SGR-792, and by the Spanish grants MTM-2008-01486 Consolider
(including a FEDER contribution) and MTM2011-26995-C02-02. The first author was partially supported by grant \#208780 from the Simons Foundation. The second and fourth authors are also partially supported by Polish NCN grant decision DEC-2012/06/M/ST1/00168.

\section{Preliminary results} \label{sec:previous}

In this section we collect some results related to the topology of Julia sets of   rational maps, which we will use repeatedly. The first theorem  states a  dichotomy between the connectivity of the Julia set of a  quadratic  rational map and the dynamical behaviour of its critical points. 

\begin{Theorem}[\cite{MilLei}, Lemma 8.2]\label{th:Milnor}
The Julia set $\J$ of a quadratic rational map $f$ is either connected or
totally disconnected (in which case the map is conjugate on the Julia set to
the one-sided shift on two symbols). It is totally disconnected if and only if
either: 
\begin{enumerate}
\item[(a)] both critical orbits converge to a common attracting fixed point, or
\item[(b)] both critical orbits converge to a common parabolic fixed point of multiplicity two but neither critical orbit actually lands on this point.
\end{enumerate}
\end{Theorem}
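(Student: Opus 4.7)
Assume (a) holds, and let $p$ be the common attracting fixed point with $U_0$ its immediate basin. I first argue both critical points $c_1,c_2$ must lie in $U_0$ itself (not merely in the full basin): $f\colon U_0\to U_0$ is a proper holomorphic map of degree $k\in\{1,2\}$, and if $k=1$ then by K\"onig/B\"ottcher linearization $f|_{U_0}$ is conjugate to a linear or power map without critical points in $U_0$; but then for a critical orbit originating outside $U_0$ to converge to $p$, that orbit would first have to enter $U_0$ through $\partial U_0\subset\J$, contradicting invariance of $\J$. Hence $k=2$ and both critical points lie in $U_0$, forcing $f^{-1}(U_0)=U_0$ (complete invariance) since $\deg f=2$. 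Then $\J=\hat{\bbC}\setminus U_0$ and $f|_\J$ is a degree-$2$ self-cover. Using B\"ottcher (super-attracting case) or K\"onig (attracting case) coordinates near $p$, I would build a nested family of invariant topological disks around $p$; their complements pull back to a Markov partition on $\J$, conjugating $f|_\J$ to the one-sided $2$-shift and exhibiting $\J$ as a Cantor set. Case (b) is parallel, using Leau--Fatou flowers: a parabolic fixed point of multiplicity $2$ has exactly two attracting petals, each absorbing one critical orbit, and the hypothesis that no critical orbit lands on $p$ keeps both critical points inside the basin (not in $\J$).

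\textbf{Hard direction.} Conversely, suppose $\J$ is disconnected. Choose a Jordan curve $\gamma$ in the Fatou set separating $\J$. Since $\deg f=2$ and $f$ has only two critical points, Riemann--Hurwitz tightly controls preimage topology: each $f^{-n}(\gamma)$ is a finite disjoint union of Jordan curves in the Fatou set. The Fatou component containing $\gamma$ is, by Sullivan's non-wandering theorem, eventually periodic, landing on some periodic component $W$. Herman rings are excluded in degree $2$ by Shishikura's inequality, so $W$ is simply connected and thus of attracting, super-attracting, parabolic, or Siegel type. Siegel disks contain no critical points and cannot absorb escaping critical orbits, so they cannot serve here. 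Next, the disconnection of $\J$ forces $W$ to be completely invariant; otherwise a non-trivial periodic cycle $W,f(W),\dots,f^{q-1}(W)$ would share boundary points with neighboring components in a way that reconnects $\J$. Hence $q=1$, i.e., $W=U_0$ is fixed, and a Riemann--Hurwitz count applied to $f\colon U_0\to U_0$ of degree $2$ forces both critical points into $U_0$. Therefore $U_0$ is the immediate basin of an attracting fixed point (case (a)) or a parabolic fixed point of multiplicity exactly $2$ (case (b), where multiplicity $2$ is needed to accommodate both critical orbits; the orbits cannot land on the fixed point since the critical points lie in $U_0$, not in $\J$).

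\textbf{Main obstacle.} The principal difficulty is the hard direction, specifically proving that any disconnection of $\J$ forces a \emph{completely invariant} (hence fixed) simply-connected Fatou component rather than merely a non-trivial periodic cycle of them. This requires combining the Riemann--Hurwitz bookkeeping above, the exclusion of Herman rings via Shishikura's inequality, and a careful argument that non-trivial periodic cycles of Fatou components must glue $\J$ along shared boundary arcs or points. A secondary delicate point is pinning down the parabolic multiplicity at exactly $2$: this follows by matching the number of attracting petals (determined by multiplicity via the Leau flower theorem) against the degree-$2$ critical-point budget, so that each of the two critical orbits converges along its own petal and neither is left stranded.
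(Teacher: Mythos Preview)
The paper does not supply a proof of this theorem; it is quoted verbatim as Lemma~8.2 of Milnor--Tan~Lei and used as a black box in the preliminaries. So there is no in-paper argument to compare against, and I can only assess your proposal on its own merits.

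Your easy direction is essentially right but the justification that both critical points lie in $U_0$ itself is faulty. You claim that if $f|_{U_0}$ had degree $1$ then a critical orbit from outside $U_0$ would have to ``enter $U_0$ through $\partial U_0\subset\J$''; this is false, since a point in a strict preimage component of $U_0$ maps directly into $U_0$ without ever touching the boundary. The correct one-line argument is the classical fact that the immediate basin of an attracting (or parabolic) fixed point always contains a critical point, so $\deg(f|_{U_0})\ge 2$, hence $=2$, hence $U_0$ is completely invariant; then backward invariance forces the second critical point into $U_0$ as well.

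Your hard direction has two genuine gaps. First, a Jordan curve $\gamma$ in the Fatou set separating $\J$ need not lie in a \emph{single} Fatou component, so invoking Sullivan on ``the Fatou component containing $\gamma$'' is not yet legitimate; you must first argue that some Fatou component is multiply connected (or otherwise get your hands on a single component that does the separating). Second, and more seriously, the step you yourself flag as the main obstacle---that disconnection of $\J$ forces a completely invariant fixed Fatou component---is asserted rather than proved: the sentence ``otherwise a non-trivial periodic cycle \dots\ would share boundary points with neighboring components in a way that reconnects $\J$'' is not an argument. The actual proof in Milnor--Tan~Lei proceeds by showing that if $\J$ is not connected then some Fatou component $U$ has non-connected boundary, taking a simple closed curve in $U$ that separates $\J$, and then running a Riemann--Hurwitz count on iterated preimages of that curve to force both critical values to lie on one side; this is the substantive combinatorial step you have not supplied.
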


\begin{Theorem}[\cite{milnor}, Theorem 19.2] \label{th:loco}
If the Julia set of a hyperbolic rational map is connected, then it is locally connected.
\end{Theorem}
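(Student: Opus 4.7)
The plan is to reduce local connectivity of $\J$ to a topological property of the Fatou components, and then to establish that property from hyperbolicity.

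First, I would show that every Fatou component of $f$ is a Jordan domain. By the classification of Fatou components together with Sullivan's non-wandering theorem and hyperbolicity, every Fatou component eventually lands in the immediate basin of a (super)attracting cycle. On a superattracting immediate basin $U$ of period $n$, the B\"ottcher coordinate conjugates $f^n|_U$ to $z\mapsto z^d$ on the unit disc, and since $P(f)\cap\J=\emptyset$ this isomorphism extends continuously to $\overline U$, making $\partial U$ a Jordan curve. The attracting (non-superattracting) case reduces to the previous one via a standard quasi-conformal surgery; the Jordan property then propagates to every preimage component through the univalent inverse branches of $f$, once more because critical points avoid $\J$.

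Second, I would show that the spherical diameters of the Fatou components tend to zero. The open set $\hat{\bc}\setminus P(f)$ carries its Poincar\'e metric $\rho$, with respect to which $f$ is uniformly expanding by a factor $\lambda>1$ on a neighbourhood of $\J$. There are only finitely many immediate basin components, each of finite $\rho$-diameter, and every other Fatou component is an iterated $f$-preimage of one of them; hence the $\rho$-diameter, and therefore the spherical diameter, of an $n$-th preimage decays like $\lambda^{-n}$, so for every $\eta>0$ only finitely many Fatou components have spherical diameter exceeding $\eta$.

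Finally, $\J$ is a connected compact planar set whose complementary domains are Jordan domains with diameters tending to zero. A standard topological argument (in the spirit of, and provable by the same techniques as, Theorem~\ref{theorem:why}) then yields local connectivity: given $z\in\J$ and $\eps>0$, choose a spherical disc $D$ of radius $\eps/3$ around $z$, list the finitely many Fatou components meeting $D$ whose diameter is at least $\eps/3$, and modify $D$ along their Jordan-curve boundaries to obtain a connected open set $W\ni z$ with $W\cap\J$ connected and of spherical diameter $<\eps$. The main obstacle is the first step: proving that $\partial U$ is a Jordan curve for every Fatou component $U$ really does use hyperbolicity, since without $P(f)\cap\J=\emptyset$ the B\"ottcher/Koenigs coordinates need not extend continuously to the boundary.
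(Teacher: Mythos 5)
The paper offers no proof of this statement --- it is quoted directly from Milnor's book (Theorem 19.2 there) --- so your attempt has to be judged on its own merits. Your overall strategy (control the boundaries of the Fatou components, show their diameters shrink, and finish with a classical plane-topology criterion) is the standard route, but your first step contains a genuine error on which the rest of the argument leans.

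The claim that \emph{every} Fatou component of a hyperbolic rational map with connected Julia set is a Jordan domain is false. Take $f(z)=z^2-1$: it is hyperbolic ($0$ lies on a superattracting $2$-cycle, $\infty$ is a superattracting fixed point) and its Julia set is connected, yet the basin of $\infty$ is a Fatou component whose boundary is the entire Julia set, which has cut points (the $\alpha$-fixed point, for instance) and so is not a simple closed curve. The faulty inference is ``the B\"ottcher isomorphism extends continuously to $\overline U$, making $\partial U$ a Jordan curve'': by Carath\'eodory, continuous extension of the Riemann/B\"ottcher map to the closed disc is equivalent to $\partial U$ being \emph{locally connected}, not to $\partial U$ being a Jordan curve --- the latter additionally requires the boundary extension to be injective, which is exactly what fails in the example. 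What hyperbolicity genuinely yields (via the expansion you set up in step 2) is the continuous extension, hence local connectivity of $\partial U$ for each periodic component, and then for each preimage component because $f$ has no critical points on $\J$. That weaker conclusion suffices, provided you also replace your last step by the correct classical criterion (Torhorst/Whyburn --- not Theorem \ref{theorem:why}, which characterizes the carpet and is a different and stronger statement): a continuum $K\subset\hat\bc$ is locally connected if every complementary component has locally connected boundary and, for each $\eps>0$, only finitely many complementary components have spherical diameter exceeding $\eps$. Step 2 also needs repair: an immediate basin component contains points of $P(f)$ (the attracting cycle itself), so it is not contained in $\hat\bc\setminus\overline{P(f)}$ and has no finite $\rho$-diameter; instead work in a compact neighbourhood $N$ of $\J$ with $f^{-1}(N)\subset \mathrm{int}(N)$ on which the expansion is uniform, observe that all but finitely many Fatou components lie in $N$, and run the $\lambda^{-n}$ estimate only while the forward orbit of a component remains in $N$.
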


The next  theorem, due to M. Rees, states that each hyperbolic component  of type $B$, $C$ and $D$ in the parameter space contains a critically finite rational map as its unique center. We also conclude that maps that belong to the same hyperbolic component are conjugate on their Julia set and so we will frequently consider only critically finite maps when referring to hyperbolic maps.

\begin{Theorem}[\cite{Re0}, Main Theorem, pp. 359-360]\label{th:Rees}
Let $\cal{H}$ be a hyperbolic component of type $B$,C or D of $\rat$. Then, $\cal{H}$ contains a unique center $f_0$, i.e., $f_0$ is the unique critically finite map inside the hyperbolic component $\cal{H}$. Moreover, all maps in the same hyperbolic component are J-stable.
\end{Theorem}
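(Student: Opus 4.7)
This is a deep theorem of Rees, and I would organize a proof around two classical tools from complex dynamics: the Ma\~n\'e--Sad--Sullivan theory of holomorphic motions, for the $J$-stability assertion, and Thurston's rigidity theorem for postcritically finite branched covers, for the existence and uniqueness of the superattracting center.

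For $J$-stability, the argument is largely a direct application of the Ma\~n\'e--Sad--Sullivan theorem. Throughout a hyperbolic component $\calh$, the critical orbits converge to attracting cycles and hence stay uniformly away from $\J$, which equals the closure of the repelling periodic points. The multiplier of any repelling cycle is a holomorphic function of the parameter and cannot cross the unit circle inside $\calh$ without destroying hyperbolicity. These two facts are precisely the hypotheses of the holomorphic motion theorem, which then produces, via the $\lambda$-lemma, a holomorphic motion of $\J$ over $\calh$ that conjugates the dynamics on the Julia set. In particular every two maps of $\calh$ are $J$-equivalent, and their Julia sets are homeomorphic via a quasiconformal map.

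For existence of the center I would use a compactness/properness argument. Parametrize $\calh$ by the multiplier $\rho$ of the attracting cycle (and, in the type $D$ case, by the pair of multipliers of the two cycles); these depend holomorphically on the parameter in $\rat\simeq\bc^2$. Take a sequence of parameters inside $\calh$ along which every multiplier tends to $0$. One must check this sequence does not escape: any limit with multipliers in $\bd$ still lies in $\calh$ since hyperbolicity persists under small perturbations, while escape to the topological boundary of $\rat$ in moduli space can be ruled out by standard normal-family and distortion estimates that keep the postcritical set controlled as long as the multipliers stay in $\bd$. The limit is then a superattracting, hence postcritically finite, rational map in $\calh$. For uniqueness, any two postcritically finite maps $f_0, f_1 \in \calh$ share the same combinatorial data---the cyclic portrait of the superattracting cycle(s) together with the placement of critical points in Fatou components, which is exactly the type $B$, $C$, or $D$ information---so one can build an explicit topological (in fact isotopic rel.\ postcritical set) conjugacy between them. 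Thurston's rigidity theorem then upgrades this to a M\"obius conjugacy, i.e., $f_0$ and $f_1$ represent the same point in $\rat$.

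The main obstacle I expect is the application of Thurston's theorem. One must verify that the orbifold associated with the candidate postcritically finite center is hyperbolic and that its combinatorial type admits no Thurston obstruction. For degree two rational maps, obstructions correspond to invariant multicurves of non-contracting type on the sphere minus the postcritical set; a direct combinatorial analysis for types $B$, $C$ and $D$ (where the postcritical set is small and well-structured) rules these out, but the verification is technically delicate, and it is precisely what makes Rees's original proof substantial. A secondary difficulty is to justify rigorously the properness of the multiplier map at the boundary of $\calh$ in $\rat$; this requires a classification of possible degenerations of quadratic rational maps in moduli space, which is the starting point of Rees's parameter-space machinery.
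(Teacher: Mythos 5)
The first thing to say is that the paper does not prove this statement at all: it is quoted verbatim as Rees's Main Theorem with a citation to \cite{Re0}, and is used as a black box throughout. So there is no in-paper argument to compare yours against; what can be assessed is whether your sketch is a viable blueprint for Rees's result. The overall architecture you propose --- Ma\~n\'e--Sad--Sullivan for the $J$-stability clause, and properness of the multiplier map plus Thurston rigidity for existence and uniqueness of the center --- is indeed the standard modern route, and the $J$-stability half is essentially complete as you state it: inside a hyperbolic component no cycle can change type and the critical orbits are normal, so the MSS characterization applies directly.

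The center half, however, contains the two places where all of the real work of Rees's proof lives, and your sketch leaves both as acknowledged gaps rather than arguments. For existence, ``standard normal-family and distortion estimates that keep the postcritical set controlled'' is not a proof that a sequence with multipliers tending to $0$ stays in a compact subset of $\rat\simeq\bc^2$; ruling out degeneration to the boundary of moduli space is precisely the content of Rees's parameter-space analysis, and it is where type $A$ components (for which the theorem is \emph{not} asserted) would break the argument --- your sketch never uses the hypothesis that $\calh$ is of type $B$, $C$ or $D$, which is a sign that something essential is missing. For uniqueness, you have the logic of Thurston's theorem slightly misplaced: since both candidate centers $f_0,f_1$ are already rational maps, no obstruction needs to be ruled out --- obstructions are only relevant for \emph{realizing} a combinatorial class. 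What you actually need is (i) a genuine Thurston equivalence between $f_0$ and $f_1$, which ``same combinatorial data'' does not by itself provide (the clean way to get it is from the quasiconformal conjugacy on the sphere that the hyperbolic structure plus the holomorphic motion furnishes), and (ii) the check that the orbifold is hyperbolic, which does hold here because a superattracting cycle of period $\geq 1$ forces points of weight $\infty$ incompatible with the Euclidean signatures. With those two repairs the uniqueness argument goes through; the existence/properness step remains the genuinely substantial gap.
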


Another important result gives conditions under which we can assure that all Fatou components are Jordan domains. Recall that this was one of the conditions for having Sierpi\'{n}ski curve Julia sets.

\begin{Theorem}[\cite{Pil}, Theorem 1.1]\label{th:Pilgrim}
Let $f$ be a critically finite rational map with exactly two critical points, not counting  multiplicity. Then exactly one of the following possibilities holds:
\begin{enumerate}
\item[(a)] $f$ is conjugate to $z^d$ and the Julia set of $f$ is a Jordan curve, or
\item[(b)] $f$ is conjugate to a polynomial of the form $z^d+c, \, c \neq 0$, and the Fatou component corresponding to the basin of infinity under a conjugacy is 
the unique Fatou component which is not a Jordan domain, or 
\item[(c)] $f$ is not conjugate to a polynomial, and every Fatou component is a Jordan domain.
\end{enumerate} 
\end{Theorem}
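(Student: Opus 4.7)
The plan is to combine Carath\'eodory's criterion for Jordan domains with a case analysis depending on whether $f$ admits a totally invariant Fatou component. First I would set up the topological preliminaries. Since both critical points of $f$ are (pre)periodic and land in (super)attracting cycles, $f$ is hyperbolic. Theorem~\ref{th:Milnor} then forces $\J$ to be connected: the totally disconnected alternative would require both critical orbits to converge to a common attracting fixed point, which cannot coexist with the existence of two distinct periodic Fatou components. Local connectivity of $\J$ follows from Theorem~\ref{th:loco}. A Riemann--Hurwitz count, using that $f$ has exactly two critical points (counted without multiplicity) and that each periodic Fatou component carries a critical point, shows that every Fatou component is simply connected. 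Thus each Fatou component $U$ is a simply connected domain with locally connected boundary, and by Carath\'eodory the problem reduces to showing that $\partial U$ is a Jordan curve, equivalently that the boundary parametrization induced by a Riemann map of $U$ is injective.

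The two polynomial subcases are then dealt with directly. If $f$ is conjugate to $z^d$, B\"ottcher coordinates at $0$ and $\infty$ extend globally and give case~(a) immediately: $\J$ is the unit circle and both Fatou components are round disks. If $f$ is conjugate to $z^d+c$ with $c\neq 0$, then critical finiteness forces the finite critical point onto a cycle of period $\geq 2$. The Fatou component $U_\infty$ containing $\infty$ is totally invariant, so its boundary is all of $\J$; via external rays one exhibits pairs of periodic rays co-landing at points where several Fatou components of the finite cycle accumulate, and any such landing point is then a cut point of $\partial U_\infty$, so $U_\infty$ fails to be a Jordan domain. The bounded Fatou components, being iterated preimages of superattracting basins whose B\"ottcher parametrizations extend injectively to the boundary by a standard combinatorial argument, are Jordan domains. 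This yields case~(b).

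The core of the proof is case~(c). I would argue by contradiction: assume some $\partial U$ is not a Jordan curve, so the boundary parametrization identifies at least two distinct arguments on $\mathbb{S}^1$. Pulling back along $f$ propagates this self-identification to every Fatou component in the grand orbit of $U$, producing an $f$-equivariant family of identified pairs of internal rays. One then organizes the corresponding arcs on the sphere into a multicurve in the complement of the postcritical set and seeks to show that it is a Thurston obstruction. Thurston rigidity for critically finite rational maps delivers the contradiction, the point being that the only way such an obstruction can arise for a map with exactly two critical points is if one of the components into which the multicurve divides the sphere is totally invariant, that is, if $f$ is polynomial--like, contrary to the assumption of case~(c). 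The main obstacle is precisely this last step: turning the slogan ``cut points on a Fatou boundary force a polynomial--like component'' into a genuine combinatorial obstruction, which requires a careful choice of the separating multicurve and an essential use of critical finiteness. The preliminaries (Theorems~\ref{th:Milnor}, \ref{th:loco} and \ref{th:Rees}) and the two polynomial subcases are, by contrast, routine.
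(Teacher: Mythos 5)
First, a point of order: the paper does not prove this statement at all --- it is quoted verbatim from Pilgrim's work (\cite{Pil}, Theorem 1.1) and used as a black box, so there is no in-paper argument to compare yours against. Judged on its own terms, your sketch points in roughly the right direction (Pilgrim's actual proof does run through Carath\'eodory semiconjugacies on Fatou boundaries and Thurston's characterization of critically finite maps), but it has genuine gaps, one of them fatal as written.

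The fatal one is that the entire content of the theorem is concentrated in the step you explicitly defer: converting ``$\partial U$ has a cut point'' into an honest Thurston obstruction or Levy cycle, and showing that for a bicritical map such an obstruction can only occur when some Fatou component is totally invariant (the polynomial case). Saying that this ``requires a careful choice of the separating multicurve'' is naming the theorem, not proving it; without a construction of the multicurve, a verification that it is $f$-stable, and an eigenvalue (or Levy-cycle) estimate, case (c) is unproven. Beyond that, your preliminary reductions are misapplied. Critical finiteness does \emph{not} imply hyperbolicity: a postcritically finite map may have a critical point whose orbit lands on a repelling cycle (e.g.\ $z^2+i$), so ``both critical orbits land in superattracting cycles'' is false in general, and Theorem~\ref{th:loco} is not available. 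Moreover, Theorems~\ref{th:Milnor} and \ref{th:Rees} concern \emph{quadratic} rational maps, whereas Pilgrim's theorem is about bicritical maps of arbitrary degree $d$ (note that $z^d+c$ appears in the statement); you cannot invoke them here. Connectivity and local connectivity of $\J$, and simple connectivity of all Fatou components, do hold for every critically finite rational map, but by different (standard) results specific to the postcritically finite setting, not by the quadratic-hyperbolic ones cited in this paper.
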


We combine the two results above to get the following corollary.

\begin{Corollary} \label{cor:useful}
Let $f\in \rat$ be  hyperbolic or critically finite and assume $f$ has no (super) attracting fixed points. Then every Fatou component is a Jordan domain.
\end{Corollary}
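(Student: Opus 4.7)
The plan is to reduce both cases of the hypothesis to Pilgrim's trichotomy (Theorem \ref{th:Pilgrim}).

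If $f$ is critically finite, the reduction is immediate: since $f$ has degree two, its set of critical points, counted without multiplicity, has exactly two elements, so Theorem \ref{th:Pilgrim} applies. Cases (a) and (b) of that theorem correspond to $f$ being conjugate to a polynomial $z\mapsto z^d$ or $z\mapsto z^d+c$, each of which has $\infty$ as a superattracting fixed point. Our hypothesis on $f$ rules this out, so we must be in case (c), and every Fatou component of $f$ is a Jordan domain.

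If $f$ is hyperbolic but not already critically finite, I would use Theorem \ref{th:Rees} to locate the unique critically finite center $f_0$ of the hyperbolic component $\mathcal{H}\subset \rat$ containing $f$. By the $J$-stability conclusion of that theorem, $f$ and $f_0$ are conjugate on a neighborhood of their Julia sets via a holomorphic motion, so Fatou components correspond and the property of being a Jordan domain is preserved. Since multipliers of attracting cycles vary holomorphically over $\mathcal{H}$, $f_0$ inherits the hypothesis of having no superattracting fixed point, and the critically finite case above gives the result.

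The delicate point is the type $A$ hyperbolic case, since Theorem \ref{th:Rees} is stated only for types $B$, $C$, and $D$. In that situation both critical points lie in a common periodic Fatou component, and the absence of a superattracting fixed point forces the underlying cycle to have period at least two. One can still deform within the hyperbolic component to obtain a critically finite representative whose two critical orbits share a common attracting periodic component; such a representative cannot be conjugate to a polynomial (its two critical points share a grand orbit that is bounded, while polynomials have the critical point at $\infty$ in the unbounded component), so case (c) of Theorem \ref{th:Pilgrim} applies to it and then transfers back to $f$ by $J$-stability.
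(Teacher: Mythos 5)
Your treatment of the two main cases is exactly the paper's proof: Pilgrim's trichotomy (Theorem \ref{th:Pilgrim}) disposes of the critically finite case once the hypothesis rules out conjugacy to a polynomial (which would force a superattracting fixed point), and Rees's Theorem \ref{th:Rees} plus $J$-stability transfers the conclusion from the critically finite center $f_0$ to an arbitrary hyperbolic $f$ in the same component. You are also right to flag that Theorem \ref{th:Rees} is stated only for types $B$, $C$ and $D$ --- a point the paper's own proof passes over in silence.

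However, your patch for type $A$ does not work, because the object you propose to deform to does not exist: there is no quadratic rational map, critically finite or otherwise, whose two critical points lie in a common Fatou component of an attracting cycle of period at least two. The correct resolution is that the type $A$ branch is vacuous under the hypothesis. Indeed, if $f$ were hyperbolic of type $A$ with no (super)attracting fixed point, then by Theorem \ref{th:Milnor} its Julia set would be connected (total disconnectedness requires both critical orbits to converge to a common attracting or parabolic \emph{fixed} point), so the periodic Fatou component $U$ containing both critical points would be simply connected; but then $f\colon U\to f(U)$ would be proper of degree three by Riemann--Hurwitz, contradicting that $f$ has global degree two. This is precisely the argument the paper gives later in Lemma \ref{lemma:sierpins}, and it is the right way to close the loophole you identified. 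As written, your type $A$ paragraph asserts the existence of a critically finite representative and then invokes the $J$-stability clause of Theorem \ref{th:Rees}, both of which are unavailable for type $A$; none of this affects the truth of the corollary, since the branch is empty, but that portion of your argument is unsupported.
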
 
\begin{proof}
Since, by hypothesis, $f$ has no (super)attracting fixed points,  $f$ cannot be conjugate to a polynomial.

First assume that $f$ is critically finite, not necessarily hyperbolic. Then, using Pilgrim's Theorem \ref{th:Pilgrim} the corollary follows. If $f$ is hyperbolic,  it belongs to a hyperbolic component $\calh$. Let $f_0$ be its center, which exists and is unique by Rees's Theorem \ref{th:Rees}. Clearly, $f_0$ is critically finite and has no (super) attracting fixed points. Hence by Pilgrims's result all Fatou components of $f_0$ are Jordan domains. Since $f$ and $f_0$ belong to the same hyperbolic component, they are conjugate on a neighborhood of the Julia set and therefore $f$ has the same property.
\end{proof}

\section{Contact between boundaries of Fatou components: Proof of Theorems A and B} \label{sec:sufficient}

Throughout this section we assume that $f$ is a  hyperbolic quadratic  rational map  having a  (super)attracting period $n$ cycle with $n\geq 3$, or equivalently, $f$ is a hyperbolic map in $\per_n(\lambda)$ for some $n\geq 3$.  

A Sierpi\'{n}ski curve (Julia set) is any subset of the Riemann sphere homeomorphic to the Sierpi\'{n}ski carpet.  Consequently, due to Whyburn's Theorem (see the introduction), a Sierpi\'{n}ski curve Julia set  is  a Julia set which is compact, connected, locally connected, nowhere dense, and  such that any two complementary  Fatou domains are bounded by disjoint simple closed curves. The following lemma states that all but one of these properties are satisfied under the described hypotheses.

\begin{Lemma}\label{lemma:sierpins}
Let  $f\in \per_n(\lambda)$,  with $n\geq 3$, be hyperbolic. Then, the Julia set $\J$ is compact, connected, locally connected and nowhere dense.  Moreover, if $f$ has no (super) attracting fixed points (which is always the case for types B and C), then  $f$ is not of type A, and each Fatou component is a Jordan domain.
\end{Lemma}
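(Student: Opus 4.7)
The plan is to verify each topological property of $\J$ in turn, then deduce the moreover clause. Compactness is immediate, since $\J$ is by definition closed in the Riemann sphere. Nowhere density follows from the standard dichotomy (either the Julia set is nowhere dense or it equals $\hat{\bbC}$) together with the fact that the attracting cycle of period $n \geq 3$ makes the Fatou set non-empty, so $\J \neq \hat{\bbC}$. For connectedness, I would apply Milnor's dichotomy (Theorem \ref{th:Milnor}) and rule out both cases of total disconnectedness: case (a) would require both critical orbits to converge to a common attracting fixed point, leaving no critical orbit for the period $n \geq 3$ attracting cycle, which is impossible since each attracting cycle needs at least one critical point in its basin; case (b) requires a parabolic fixed point, which hyperbolicity forbids. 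Local connectedness of the now-connected $\J$ then follows from Theorem \ref{th:loco}.

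For the moreover clause, assume $f$ has no superattracting fixed points. The Jordan domain property for every Fatou component is a direct application of Corollary \ref{cor:useful}, whose hypotheses (hyperbolicity plus absence of superattracting fixed points) are precisely what is assumed. To exclude type $A$, suppose by contradiction that both critical points lie in a single Fatou component $U$, and set $V = f(U)$. Since $\J$ is connected, every Fatou component is simply connected, so $\chi(U) = \chi(V) = 1$. Applying Riemann--Hurwitz to the branched cover $f \colon U \to V$, whose degree $d$ satisfies $d \leq \deg f = 2$, with each of the two critical points contributing ramification $e_p - 1 = 1$, one obtains $1 = d - 2$, forcing $d = 3$, a contradiction. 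Hence $f$ is not of type $A$.

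There is no serious obstacle here: the only non-trivial computation is the one-line Riemann--Hurwitz check ruling out type $A$, and the rest is bookkeeping with the results already collected in Section \ref{sec:previous}. The principal care required is to invoke hyperbolicity at the right places---both to exclude parabolic cycles in Milnor's dichotomy and to apply Theorem \ref{th:loco} and Corollary \ref{cor:useful}---and to use $n \geq 3$ to argue that the attracting period-$n$ cycle must consume one of the two critical orbits.
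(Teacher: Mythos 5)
Your proof is correct and follows essentially the same route as the paper: standard facts give compactness and nowhere density, Milnor's dichotomy (Theorem \ref{th:Milnor}) plus hyperbolicity gives connectedness, Theorem \ref{th:loco} gives local connectedness, Corollary \ref{cor:useful} gives the Jordan domain property, and type $A$ is excluded by the same degree count (your Riemann--Hurwitz computation is just the paper's ``$f\colon U\to f(U)$ has degree three'' made explicit). If anything, your treatment of connectedness is slightly more careful than the paper's, since you rule out case (a) of the dichotomy directly from the existence of the period-$n$ attracting cycle rather than conditioning on the absence of attracting fixed points.
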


\begin{proof}
The Julia set of a hyperbolic rational map is always a compact, nowhere dense subset of the Riemann sphere. If there are no (super) attracting fixed points, Theorem \ref{th:Milnor} implies  $\J$ is connected and hence locally connected (Theorem \ref{th:loco}). 

If $f$ is of type A without attracting fixed points, both critical points belong to the same (super) attracting Fatou component $U$ of period higher than 1. Since $\J$ is connected, $U$ is simply connected and therefore $f:U \to f(U)$ is of degree three ($U$ has two critical points) which is a contradiction since $f$ has global degree 2. 

Finally Corollary \ref{cor:useful} implies that all  Fatou components of $f$ are Jordan domains. 
\end{proof}

\begin{Remark}
In view of Theorem \ref{theorem:why} and Lemma \ref{lemma:sierpins}, if $f$ is a hyperbolic map in $\per_n(\lambda) \ n\geq 3$  without (super) attracting fixed points we have that $\J$ is a Sierpi\'{n}ski curve if and only their Fatou components have disjoint closure. 
\end{Remark}

To prove the main result of this section, Proposition \ref{prop:fixed_point_boundary}, we first establish some technical topological and combinatorial results that simplifies the exposition.

\begin{Lemma}\label{lem:three_domains_1}
Let $U,V,W$ be three disjoint planar Jordan domains and let $\gamma: \br / \bz \to \partial U$ be a parametrization of $\partial U$.  
\begin{enumerate}
\item [(a)] Let $a,b,c,d\in[0,1)$ be such that $0\leq a<c< 1, \, 0 \leq b<d<1$ and $\{a,b\} \cap \{c,d\} =\emptyset$. Assume that $ \gamma(a)$ and $ \gamma(c)$ belong to $\partial U \cap \partial V$ and $\gamma(b)$ and $\gamma(d)$ belong to $\partial U \cap \partial W$. Then, either $\{b,d\} \subset (a,c)$ or $\{b,d\} \subset \br/\bz \setminus (a,c)$.  
\item[(b)] Let $z_1,z_2, \cdots, z_{k},\ k\geq 1$ be $k$ different points in $\partial U \cap \partial V \cap \partial W$. Then $k \leq 2$.
\end{enumerate}
\end{Lemma}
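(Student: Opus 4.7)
The plan is to handle the two parts with distinct topological ideas: part (a) by constructing a theta-graph out of $\partial U$ and a Jordan arc through $V$, and part (b) by exhibiting a planar embedding of $K_{3,3}$ which contradicts its well-known non-planarity.

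For part (a), I would first produce a Jordan arc $\alpha$ from $\gamma(a)$ to $\gamma(c)$ with $\alpha\setminus\{\gamma(a),\gamma(c)\}\subset V$; this is immediate from the fact that $\overline{V}$ is homeomorphic to the closed disc $\overline{\bd}$, so one can pull back a chord. Writing $A_1=\gamma([a,c])$ and $A_2=\gamma([c,1])\cup\gamma([0,a])$ for the two arcs of $\partial U$ cut out by $\gamma(a)$ and $\gamma(c)$, the set $\partial U\cup\alpha$ is a theta-graph whose complement in $\bs^2$ has exactly three components: $U$ itself and two Jordan domains $D_1,D_2$ with $\partial D_i=A_i\cup\alpha$ (three applications of the Jordan curve theorem). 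The key observation is that $W$ is disjoint from this theta-graph: $W$ is open and disjoint from $U$, so $W\cap\partial U=\emptyset$; and $W$ is open and disjoint from $V$, so $W$ cannot meet $\alpha\setminus\{\gamma(a),\gamma(c)\}\subset V$ (the endpoints of $\alpha$ lie in $\partial U$ and were already excluded). By connectedness $W\subset D_1$ or $W\subset D_2$, and taking closures then yields $\partial W\cap\partial U\subset\overline{D_i}\cap\partial U=A_i$, which gives the dichotomy in the statement.

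For part (b), I would argue by contradiction using the non-planarity of $K_{3,3}$. Assume $z_1,z_2,z_3\in\partial U\cap\partial V\cap\partial W$ are three distinct points, and fix an interior base point $p_X\in X$ for each $X\in\{U,V,W\}$. Using again the disc model $\overline{X}\cong\overline{\bd}$, pick three Jordan arcs $\sigma_i^X$ from $p_X$ to $z_i$ whose interiors lie in $X$ and which are pairwise disjoint except at $p_X$ (the pre-images of three disjoint radii of the disc). The resulting nine arcs realize the complete bipartite graph $K_{3,3}$ topologically on the vertex set $\{p_U,p_V,p_W\}\cup\{z_1,z_2,z_3\}$: arcs in the same $X$ are disjoint outside $p_X$ by construction, and arcs in different domains are disjoint outside the common endpoints $z_i$ because $U,V,W$ are pairwise disjoint. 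This is a topological embedding of $K_{3,3}$ in $\bs^2$, contradicting non-planarity; hence $k\le 2$.

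The only non-trivial topological input is the existence of the arcs used above, and both reduce to the disc model of a Jordan domain: inside $\overline{\bd}$ one simply takes a chord or three disjoint radii. A minor subtlety in part (a) is that the hypothesis $\{a,b\}\cap\{c,d\}=\emptyset$ still permits $a=b$, in which case $\gamma(a)$ is a triple intersection point sitting at the common endpoint of $A_1$ and $A_2$ and the strict open-interval inclusion $\{b,d\}\subset(a,c)$ should be interpreted in the appropriate closed sense; the topological separation argument nevertheless confines $\gamma(b)$ and $\gamma(d)$ to a single arc, which is all that part (b) will actually need.
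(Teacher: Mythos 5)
Your argument is correct, and it differs from the paper's in both parts even though the underlying topological input is identical (the disc model of a Jordan domain, hence accessibility of each boundary point by internal arcs, plus Jordan-curve separation on the sphere). In part (a) the paper does not form the theta-graph $\partial U\cup\alpha$: it joins $\gamma(a)$ to $\gamma(c)$ by two internal rays in $U$ through a base point $u$ and two internal rays in $V$ through a base point $v$, obtaining a Jordan quadrilateral that cuts $\hat\bc$ into exactly two pieces, one of which contains all of $W$. Your version, which keeps the whole of $\partial U$ inside the separating graph, buys a slightly cleaner conclusion — you get $\partial W\cap\partial U\subset A_i$ directly, rather than reading off where $b$ and $d$ may sit relative to the rays — but the two constructions are interchangeable and of the same difficulty. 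In part (b) the divergence is more substantial: the paper proves the bound by hand, first separating the sphere by the $U$--$V$ quadrilateral through $z_1,z_2$, then subdividing the component containing $W$ by the two internal rays from $w$ to $z_1,z_2$, and observing that $z_3$ falls into one of three faces from which it cannot be reached by internal rays from all of $u,v,w$. Your appeal to the non-planarity of $K_{3,3}$ packages exactly this subdivision argument into a citation of a classical theorem; it is shorter, and it makes transparent that the Jordan-domain hypothesis (which supplies the three pairwise disjoint accessing arcs in each domain) is what rules out phenomena like the Lakes of Wada. Your closing caveat about the degenerate case $a=b$ is well taken: the paper's proof tacitly assumes the four boundary points are distinct as well, and in the only place the lemma is invoked (Proposition \ref{prop:fixed_point_boundary}) they are, since they come from two distinct mixed orbits of the doubling map.
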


\begin{proof} 
We first choose  three marked points $u,v$ and $w$ in $U, V$ and $W$, respectively. Since $U$ (respectively $V$ and $W$) is a Jordan domain, every boundary point  is accessible from the interior to the marked point $u$ (respectively $v$ and $w$) by a unique internal ray.  

First we prove statement (a). We build a (topological) quadrilateral
formed by two internal rays in $U$, joining $u$ and $\gamma(a)$ and another one  joining $u$ and $\gamma(c)$, and two internal rays in $V$ joining $v$ and $\gamma(a)$ and $v$ and $\gamma(c)$. This divides the Riemann sphere into two connected components $C_1$ and $C_2$ only one of which, say $C_1$, contains $W$. Thus $b$ and $d$ either both belong to the interval $(a,c)$ or both belong to the complement of $(a,c)$.

Second we prove statement (b).  Assume  $k \geq 3$. As before we build a (pseudo) quadrilateral formed by two internal rays in $U$ joining $u$ and $z_1$ and another one  joining $u$ and $z_2$ and two internal rays in $V$ joining $v$ and $z_1$ and $v$ and $z_2$. The complement of 
those rays (plus the landing points) in the Riemann sphere are two connected domains $C_1$ and $C_2$ only one of which, say $C_1$, contains $W$. We now add to the picture the two internal rays in $C_1$ connecting the point $w$ with $z_1$ and $z_2$, respectively. These new edges subdivide the domain $C_1$  into two domains, say $D_1$ and $D_2$. By construction the points $\{z_3 \cdots, z_{k}\} \in\partial U \cap \partial V \cap \partial W$ belong to one and only one of the domains $C_2$, $D_1$ or $D_2$. Therefore they cannot be accessed through internal rays by the three marked points $u,v$ and $w$, a contradiction. So $k\leq 2$. 
\end{proof}

\begin{Lemma}\label{lemma:three_dom}
Let $f$ be a rational map of degree $d\geq 2$. Let $U,V$ and $W$ be three different Jordan domains such that $f(U)=U$, $f(V)=V$ and $f(W)=W$. If there exists $p \in \partial U \cap \partial V \cap \partial W$,  then either $f(p)=p$ or $f'(p)=0$.
\end{Lemma}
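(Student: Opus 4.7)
I would first set $S := \partial U \cap \partial V \cap \partial W$ and show that $f(S) \subseteq S$ (using pairwise disjointness of $U, V, W$, which holds in the application). For any $s \in S$, continuity and the invariance $f(\overline X) \subseteq \overline X$ give $f(s) \in \overline U \cap \overline V \cap \overline W$; on the other hand, if $f(s)$ lay in the open set $U$ then a small neighborhood $N$ of $s$ would satisfy $f(N) \subseteq U$, but $N$ meets $V$ (as $s \in \partial V$) and $f(V) \subseteq V$ would force $f(N\cap V) \subseteq U \cap V = \emptyset$, contradicting $N \cap V \neq \emptyset$. Symmetrically $f(s) \notin V \cup W$, so $f(s) \in S$. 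Since by Lemma~\ref{lem:three_domains_1}(b) we have $|S| \leq 2$ and $p \in S$, the only case not immediately yielding $f(p) = p$ is when $S = \{p, q\}$ with $q \neq p$ and $f(p) = q$; it is in this case that we must prove $f'(p) = 0$.

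Arguing by contradiction, I would suppose $f'(p) \neq 0$, so that $f$ is an orientation-preserving local biholomorphism from a neighborhood $N_p$ of $p$ onto a neighborhood $N_q$ of $q$. Since each of $U$, $V$, $W$ is a Jordan domain, Carath\'eodory's theorem shows that, for $N_p$ and $N_q$ small enough, each $X \in \{U, V, W\}$ meets $N_p$ (resp.\ $N_q$) in a single topological half-disk ``petal'', and the three petals at $p$ and at $q$ are arranged in well-defined cyclic orders. The invariance $f(X) \subseteq X$ together with the local injectivity of $f$ forces $f$ to send the $X$-petal at $p$ into the $X$-petal at $q$ for each $X$; orientation preservation then forces the cyclic order of the $(U,V,W)$-petals at $p$ to coincide with that at $q$.

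The sought contradiction is the purely topological fact that the two cyclic orders are in fact \emph{reversed}. To establish this, I would choose an arc $\alpha_X \subset X$ from $p$ to $q$ for each $X \in \{U, V, W\}$ (possible by the path-connectedness of Jordan domains); the three arcs are pairwise disjoint off $\{p,q\}$, so $\alpha_U \cup \alpha_V \cup \alpha_W$ is a theta graph embedded in $\hat{\bbC}$, whose complement, by Euler's formula, consists of exactly three topological-disk faces, each bounded by two of the arcs. The standard combinatorial fact that the cyclic order of edges at the two vertices of a theta graph embedded in the oriented $2$-sphere is reversed, combined with the observation that $\alpha_X$ lies inside the $X$-petal near either endpoint, then yields the claim. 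The main obstacle is precisely this orientation-reversal statement, which I would verify concretely by orienting each of the three bigon faces with the face on the left and reading off, at $p$ and at $q$, the cyclic positions of the two arcs bounding each face.
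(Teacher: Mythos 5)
Your proof is correct and follows essentially the same route as the paper's: assuming $f(p)\neq p$ and $f'(p)\neq 0$, both arguments derive a contradiction between the orientation-preserving local behaviour of $f$ at $p$ and the reversal of the cyclic order of $U,V,W$ around $p$ versus around $f(p)$, obtained by joining the two points through the invariant domains. The paper packages this as a winding-number count (one loop around $p$ forcing two loops around $f(p)$) using a bigon through $\overline{U}\cup\overline{V}$ together with the auxiliary point $w\in W$, whereas you package it as the cyclic-order reversal at the two vertices of an embedded theta graph; the underlying topological fact is the same.
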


\begin{proof}
By assumption the three Jordan domains $U$, $V$ and $W$ are Fatou components.
Let $p \in \partial U \cap \partial V \cap \partial W$ such that $f(p)\neq p$.  Notice that  $f(p) \in \partial U \cap \partial V \cap \partial W$ and  denote $\delta:=|f(p)-p|>0$. 

Take a circle $\gamma_\varepsilon$ around $p$ of radius $\varepsilon < \delta/3$. Since $f$ is holomorphic at $p$, if we assume $f'(p)\neq 0$ we can choose $\varepsilon$ small enough so that, if we go around $p$ counterclockwise once through $\gamma_\varepsilon$, then its image, $f(\gamma_\varepsilon)$, also gives one and only one turn  around $f(p)$  counterclockwise (in particular $f$ preserves orientation). Let $D_\varepsilon$ denote the disc of center $p$ and radius $\varepsilon$.

Denote by $u,v$ and $w$ three points in $U\cap \gamma_\varepsilon$, $V\cap \gamma_\varepsilon$ and $W\cap \gamma_\varepsilon$, respectively, which can be joined with $p$ through curves in $U\cap D_\eps$, $V\cap D_\eps$ and $W\cap D_\eps$ respectively.  Assume, without loss of generality, that starting at $u$, and turning counterclockwise, $\gamma_\varepsilon$ meets   $v$ and $w$ in this order. Let $\beta_U$ be a simple curve in $\overline{U}$ joining $p$,  $u$, $f(u)$ and $f(p)$. Similarly let $\beta_V$ be a simple curve in $\overline{V}$ joining $p$,  $v$, $f(v)$ and $f(p)$. Let $D_1$ and $D_2$ be the two connected components of $\hat\bc \setminus ( \beta_U \cup \beta_V)$. Choose  $D_1$ to be the region intersecting the arc of $\gamma_\eps$ going from $u$ to $v$ counterclockwise. Thus $D_1$ 
intersects the arc of $f\left(\gamma_\eps\right)$ going from $f(u)$ to $f(v)$ clockwise. It follows that  the points $w$ and $f(w)$ (and the whole domain $W$) should belong to $D_2$ which by construction intersects the piece of $f(\gamma_\eps)$ that goes from $f(u)$ to $f(v)$ counterclockwise. It is now immediate to see that one turn around $p$ implies two (or more) turns around $f(p)$, a contradiction with $f^{\prime}(p) \ne 0$. See Figure \ref{fig:one_two_turns}. Hence if $f'(p) \ne 0$ we should have $f(p)=p$ and the lemma follows.
\end{proof}

\begin{figure}[hbt!]
\def\svgwidth{0.8\textwidth}
\begin{center}
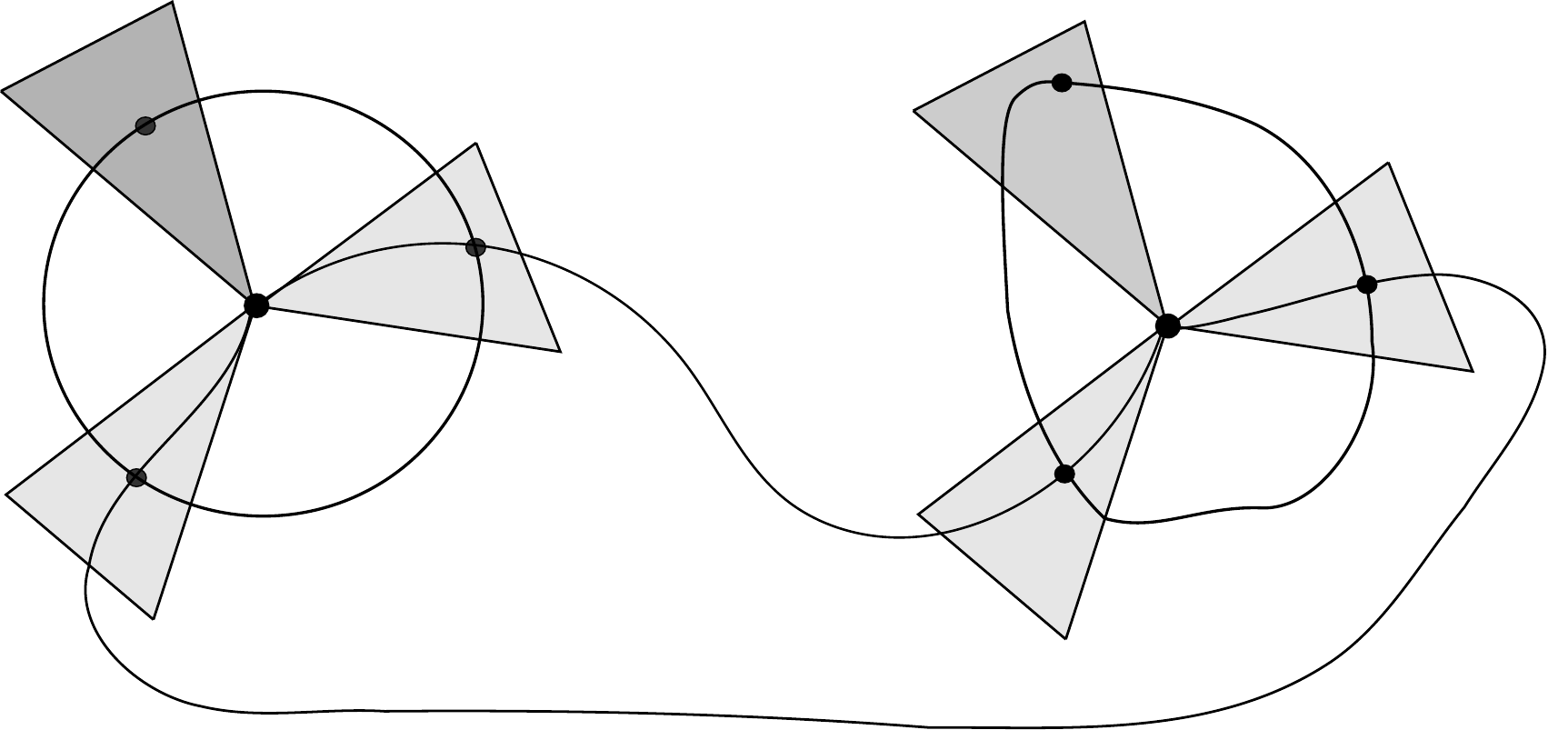
\end{center}
    \caption{\small{Sketch of the proof of Lemma \ref{lemma:three_dom}}}
\label{fig:one_two_turns}
\end{figure}

\begin{Remark}
The previous lemma only uses local properties of holomorphic maps. In particular it applies to rational maps of any degree.  In our case we will apply 
this lemma to a suitable iterate of a quadratic rational map.   
\end{Remark}

The above lemmas give some topological conditions on how the boundaries of the Fatou components may intersect. In what follows we will use frequently the fact that a certain map defined on the boundary of the Fatou components behaves like (more precisely it is conjugate to) the doubling map $\theta \to 2 \theta$ acting on the circle. The next lemma gives information on how the orbits of the doubling map distribute on the unit circle.

 \begin{Definition}
Let $\eta=\{\eta_0,\eta_1, \cdots, \eta_\ell, \cdots\}$ and $\tau=\{\tau_0,\tau_1, \cdots, \tau_\ell, \cdots\}$, with $\eta_i,\tau_i\in \br/\bz$ denote two different (finite or infinite) orbits under the doubling map. We say that $\eta$ and $\tau$ are {\em mixed} if  there exist four indexes $a,b,c$ and $d$ such that $\eta_a < \tau_b < \eta_c < \tau_d$  with the cyclic order of the circle.
\end{Definition}

\begin{Lemma}\label{lemma:notmix}
Consider the doubling map, $ \theta \to 2 \theta$, acting on the unit circle $\br/ \bz$. If $\tau$ and $\eta$ are two different orbits of the doubling map which are either finite and periodic of period $k\geq 3$ or infinite, then $\tau$ and $\eta$ are mixed. 

\end{Lemma}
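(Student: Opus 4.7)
The plan is to argue by contradiction. Suppose $\tau$ and $\eta$ are distinct orbits of the doubling map satisfying the hypotheses that are \emph{not} mixed. Then $\tau$ is contained in the open arc $I = (\eta_i, \eta_j)$ determined by two cyclically consecutive points of $\eta$; since two distinct orbits of the types considered are disjoint, $\tau \cap \eta = \emptyset$, so $\tau$ lies strictly inside $I$.

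I would first establish a \emph{spreading lemma}: any forward invariant subset of $\bbR/\bbZ$ with at least two points must meet both half-circles $[0,1/2)$ and $[1/2,1)$. Indeed, interpreting $f$ as the binary shift, a subset contained in $[0,1/2)$ would force every binary digit of every point to be $0$, so it reduces to $\{0\}$; the other half is analogous. Hence both $\tau$ and $\eta$ intersect each half-circle. In particular, if $\overline{I}$ is contained in a single half-circle, the spreading lemma applied to $\tau\subset I$ yields an immediate contradiction.

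Otherwise $I$ contains $0$ or $1/2$ in its interior. The crucial observation is that the forward invariance of $\eta$ constrains the doubling of the boundary points: $f(\eta_i),f(\eta_j)\in \eta\setminus I^\circ$. For example, if $\eta_i<1/2<\eta_j$, then $f(\eta_i)=2\eta_i$ together with $2\eta_i\notin I$ and $2\eta_i>\eta_i$ forces $2\eta_i\geq \eta_j$. The spreading lemma supplies a point $p\in \tau\cap(\eta_i,1/2)$, and the forward invariance of $\tau$ gives $2p\in\tau\subset I$, so $2p<\eta_j\leq 2\eta_i$; hence $p<\eta_i$, contradicting $p>\eta_i$. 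The wrap-around configuration $I=(\eta_i,1)\cup[0,\eta_j)$ with $\eta_j<1/2<\eta_i$ is symmetric: one derives $2\eta_j\leq\eta_i$ and iterates to show $\tau\cap[0,1/2)\subseteq [0,\eta_j/2^n)$ for every $n$, forcing this intersection to be $\{0\}$ (which would make $\tau$ a period-one orbit, contradicting the hypothesis) or empty (contradicting the spreading lemma). The remaining configurations in which $I$ contains both $0$ and $1/2$ are handled analogously, by checking directly that $f$ of a suitable endpoint would lie in $I^\circ$, a contradiction.

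The main subtlety is the coordinated use of the forward invariance of \emph{both} orbits: the arithmetic constraint on $\eta$'s boundary points (such as $2\eta_i\geq \eta_j$) is precisely what obstructs the spreading of $\tau$ demanded by the spreading lemma. This is where the hypothesis that the orbits are periodic of period $\geq 3$ or infinite is essential, because it is what makes the spreading lemma applicable to both $\tau$ and $\eta$.
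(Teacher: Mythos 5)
Your argument is correct for the periodic case and takes a genuinely different route from the paper's. The paper argues directly with binary expansions: any cycle of period $k\geq 3$ must visit both quadrants $(1/4,1/2)$ and $(1/2,3/4)$, and also at least one of $(0,1/4)$ and $(3/4,1)$ (only the $2$-cycle $\{1/3,2/3\}$ fails the latter), and the mixed quadruple is then exhibited explicitly by taking $\eta_c$ to be the third-quadrant point of largest argument among the two cycles. You instead argue by contradiction: an unmixed pair is separated by a complementary arc $I$ of $\eta$, your spreading lemma (whose proof is the same binary-digit observation the paper uses) forces $I$ to straddle $0$ or $1/2$, and the expansion of the doubling map at the endpoints of $I$ (the inequality $2\eta_i\geq\eta_j$, respectively the estimate $p<\eta_j/2^{n}$) kills each configuration. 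Your route is longer but proves slightly more: it shows the $2$-cycle is mixed with any orbit having at least two points, so, contrary to your closing remark, $k\geq 3$ is not what makes the spreading lemma applicable (two points suffice); in the paper's proof that hypothesis is used precisely to guarantee a visit to the first or fourth quadrant. The paper's route buys an explicit witness with essentially no case analysis.

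One step needs more care. Your opening reduction --- ``$\tau$ is contained in the open arc determined by two cyclically consecutive points of $\eta$'' --- only makes literal sense when $\eta$ is finite, whereas the lemma (and its application in Proposition \ref{prop:fixed_point_boundary}) also concerns the case where both orbits are infinite, possibly with dense closure. The repair is to take $I$ to be a connected component of the complement of $\overline{\eta}$, observe that $\overline{\eta}$ is still forward invariant so the endpoint inequalities survive (now possibly weak), and dispose of the degenerate situations in which an endpoint of $I$ equals $0$ or $1/2$ or is an accumulation point belonging to $\overline{\tau}$. Relatedly, two different infinite forward orbits need not be disjoint, as you assert; however, if they meet they share an entire infinite forward orbit, and four common points already yield a mixed quadruple, so that case is immediate. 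None of this is difficult, but it is where the actual content of the infinite case lives; the paper is equally terse there, dismissing it with ``follows similarly.''
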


\begin{proof}
Denote by $\Theta$ the doubling map. If the  binary expansion  of $\theta \in \br/\bz$ is $s(\theta)=s_0 s_1 s_2 \cdots ,\ s_j\in\{0,1\}$ then  $s_j=0$ if and only if $\Theta^j(\theta) \in [0,1/2)$. 
Consequently the four quadrants given by $(0,1/4), (1/4,1/2), (1/2,3/4), (3/4,1)$ correspond to  those angles whose binary expansion starts by  $00$, $01$, $10$ and $11$, respectively. Observe that the angles $0, 1/4, 1/2$ and $3/4$ are fixed or prefixed.

First suppose that the orbits are periodic of period $k \geq 3$. Then,  both cycles should have one angle in the second quadrant and one angle in the third quadrant. Moreover each orbit should have at least one angle  in the first quadrant (corresponding to two consecutive symbols 0), or one angle in the fourth quadrant (corresponding to two consecutive symbols 1).  Indeed, the only periodic orbit  touching neither the first nor the fourth quadrant is the unique 2-cycle $\{1/3,2/3\}$. 

Assume w.l.o.g. the $\eta$-cycle is the one having a point  in the third quadrant with largest argument (among the two cycles). Denote this point  by $\eta_c$.  Next we select one point of the $\tau$-cycle in the third quadrant, say $\tau_b$, and one point of the $\eta$-cycle in the second quadrant, say $\eta_a$. Finally we choose one point of the $\tau$-cycle belonging to either the fourth or the first quadrant; denoted by $\tau_d$. So by construction we have $\eta_a < \tau_b < \eta_c < \tau_d$, as we wanted to show.

The case of inifnite orbits follows similarly.
\end{proof}

We are ready to prove the main result of this section which implies Theorem A(a).

\begin{Proposition} \label{prop:fixed_point_boundary}
Let $n\geq 3$ and let $f \in \rat$ be a  hyperbolic map   of type $C$ or $D$ in $\per_n(\lambda)$ having no (super) 
attracting fixed points.  We denote by $U_0, U_1, \cdots, U_{n-1}$ the
Fatou components which form the immediate basin of attraction of an n-cycle. 
Then  for $i \neq m$, $\partial U_{i}  \cap \partial U_m$   is either empty or reduces to a 
unique point $p$ satisfying $f^{\ell}(p)=p$, for some $1\leq \ell<n$, dividing  $n$.
\end{Proposition}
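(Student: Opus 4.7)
I would begin by reducing to the critically finite center of the hyperbolic component containing $f$ via Rees's theorem (Theorem~\ref{th:Rees}), preserving all topological intersection patterns between $\partial U_i$ and $\partial U_m$. Each $U_k$ is then a Jordan domain by Corollary~\ref{cor:useful}, and since $f$ is of type $C$ or $D$, exactly one $U_j$ in the immediate basin cycle contains a critical point, so $f^n : U_k \to U_k$ has degree $2$. Böttcher's coordinates, extended by Carathéodory's theorem to the Jordan boundary, furnish a homeomorphism $\gamma_k : \mathbb{R}/\mathbb{Z} \to \partial U_k$ conjugating $f^n|_{\partial U_k}$ to the doubling map $\theta \mapsto 2\theta$. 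Hence there is a unique $f^n$-fixed point $q_k := \gamma_k(0)$ on each $\partial U_k$, and since $f(q_k)$ is the unique $f^n$-fixed point on $\partial U_{k+1}$, the points $q_0, q_1, \dots, q_{n-1}$ form a single $f$-cycle of period $\ell$ dividing $n$.

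Granted these preliminaries, the proposition reduces to the claim that every $p \in \partial U_i \cap \partial U_m$ with $i \ne m$ is fixed by $f^n$. Indeed, uniqueness of the doubling map's fixed angle then forces $p = q_i = q_m$; since $i \ne m$ the cycle $q_0, \dots, q_{n-1}$ must collapse, so $\ell < n$, giving $f^\ell(p) = p$ with $1 \le \ell < n$ and $\ell \mid n$. Uniqueness of $p$ in $\partial U_i \cap \partial U_m$ also follows from the uniqueness of $q_i$.

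To prove the key claim, I would argue by contradiction: suppose $p \in \partial U_i \cap \partial U_m$ is not fixed by $f^n$, so the forward $f^n$-orbit $\mathcal{O}$ of $p$ has at least two distinct points, all in $\partial U_i \cap \partial U_m$. Applied to the iterate $f^n$ (which fixes every $U_k$), Lemma~\ref{lemma:three_dom} ensures that any triple boundary intersection $\partial U_i \cap \partial U_m \cap \partial U_k$ with $k \notin \{i,m\}$ consists of $f^n$-fixed points: the critical-point alternative is excluded because critical points of $f$, and hence of $f^n$, lie in the Fatou set of the hyperbolic map $f$. Combined with the planar topological observation that finitely many shared boundary points of two disjoint Jordan domains appear in the same cyclic order on both boundaries (up to reversal, since any linked configuration would force an arc of $\partial U_m$ to enter $U_i$, contradicting disjointness), this should rule out the case when $\mathcal{O}$ is periodic of period $\ge 3$ or infinite: Lemma~\ref{lemma:notmix} provides mixed doubling orbits on the $i$-circle, but the cyclic-order compatibility with $\partial U_m$ forbids this mixing simultaneously on both circles.

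The hardest step will be the residual period-$2$ and strictly preperiodic cases, since Lemma~\ref{lemma:notmix} explicitly requires period $\ge 3$. My plan for these is to iterate under $f$ (not just $f^n$), so that the orbit gets distributed across the pairs $\partial U_{i+j} \cap \partial U_{m+j}$ for $j = 0, 1, \dots, n-1$, and then to locate a triple boundary intersection produced by this enlarged structure; invoking Lemma~\ref{lemma:three_dom} at such a triple point yields $f^n$-fixedness of some iterate of $p$, which propagated back along the orbit contradicts $\mathcal O \ne \{p\}$, completing the contradiction.
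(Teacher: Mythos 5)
Your setup --- reduction to the critically finite center via Rees's theorem, B\"ottcher/doubling coordinates on each $\partial U_k$, the reduction of the whole statement to showing that every $p\in\partial U_i\cap\partial U_m$ is fixed by $g=f^n$, and the use of Lemma~\ref{lemma:three_dom} with the critical-point alternative excluded by hyperbolicity --- matches the paper. The gap is in your main case. You claim that when the $g$-orbit $\mathcal O$ of $p$ is periodic of period $\geq 3$ or infinite, a contradiction follows by combining Lemma~\ref{lemma:notmix} with the fact that the points of $\mathcal O$ appear in reversed cyclic order on $\partial U_i$ and on $\partial U_m$. But you never exhibit \emph{two distinct} doubling orbits on a \emph{single} circle, which is what Lemma~\ref{lemma:notmix} requires, and the order-reversal constraint alone is combinatorially vacuous: for any doubling orbit $\{\eta_k\}$ the mirror orbit $\{-\eta_k\}$ is again a doubling orbit and the correspondence $\eta_k\mapsto-\eta_k$ reverses cyclic order (e.g.\ $\{1/7,2/7,4/7\}$ versus $\{3/7,5/7,6/7\}$), so the configuration you want to exclude is perfectly realizable at the level of circle combinatorics. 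The paper's argument needs a genuinely extra ingredient here: after normalizing $p\in\partial U_0\cap\partial U_j$, one applies $f^j$ to obtain a \emph{second} orbit $\mathcal O(f^j(p))\subset\partial U_j\cap\partial U_{2j}$ living on the same circle $\partial U_j$ as $\mathcal O(p)$; either the two orbits coincide, giving at least three points in the triple intersection $\partial U_0\cap\partial U_j\cap\partial U_{2j}$ (impossible by Lemma~\ref{lem:three_domains_1}(b)), or they are distinct, hence mixed by Lemma~\ref{lemma:notmix}, contradicting the separation statement of Lemma~\ref{lem:three_domains_1}(a) for the three disjoint domains $U_0,U_j,U_{2j}$. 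Without bringing in this third component the argument does not close.

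Two further steps of your plan would also fail as written. For strictly preperiodic orbits you propose to deduce that some $g^\ell(p)$ is $g$-fixed and then ``propagate back''; but $g$ is $2$-to-$1$ on $\partial U_i$, so fixedness of $g^\ell(p)$ does not pull back to $p$. The paper instead kills preperiodicity by observing that $g^\ell(p)$ and $g^{\ell-1}(p)$ would be two distinct preimages, inside $\partial U_j$, of one point under the degree-one map $f:\overline{U_j}\to\overline{U_{j+1}}$. Finally, the three-component strategy collapses in the symmetric case $2j=n$, where $U_{2j}=U_0$: there the paper needs a separate orientation argument (the $f^j$-images of the orbit angles form an infinite monotone sequence forced to converge to the repelling fixed angle $0$ of the doubling map), and your outline does not anticipate this case at all.
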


\begin{proof}
Without loss of generality we  assume that $f$ is critically finite, i.e.,  $f \in \per_n(0)$ (see Theorem \ref{th:Rees}),  
$U_0$ contains the unique critical point belonging to the superattracting cycle under consideration,  and 
$f(U_i)=U_{i+1}$ (so, $f(u_i)=u_{i+1}$) (here and from now on, all indices are taken mod $n$). Moveover, since $f$ is of type $C$ or $D$ and $n \geq 3$, 
every  Fatou component is a Jordan domain (Corollary  \ref{cor:useful}) and the dynamics of 
$g:=f^n:\overline{U_i} \to \overline{U_i}$ is conformally conjugate to $z \to z^2$ on the closed unit disc 
$\overline{\bd}$ by the B\"ottcher map $\phi_i : \overline{U_i} \to \overline{\bd}$ (which is uniquely defined on 
each $U_i,\ i=0,\ldots, n-1$).  

The inverse of the B\"ottcher map  defines internal rays in every $\overline{U_i}$; more precisely 
$R_i(\theta)=\phi_i^{-1} \{ r e^{2 \pi i \theta}, 0 \leq r \leq 1 \}$ gives the internal ray in $U_i$ of angle $\theta \in \br / \bz$. 
The unique point of $R_i(\theta)$ in the  boundary of $U_i$ (that is, $\phi_i^{-1}(e^{2 \pi i \theta})$) 
will be denoted by $\hat{R}_i(\theta)$. 

The map $f$ induces the following dynamics on the internal rays 
$$\,f(R_0(\theta))=R_1(2 \theta), \, f(R_1(2\theta))=R_2(2\theta), \, \cdots, \,f(R_{n-1}(2 \theta))= R_0(2 \theta),$$ 
for every $\theta \in \br / \bz$. Similarly the equipotential $E_i(s)$ is defined by $\phi_i^{-1} \{ s e^{2 \pi i \theta}, 0 \leq \theta \leq 1 \}$ 
which cuts each internal ray once.

Assume there is a point $p \in\partial U_i \cap \partial U_m$ for $i \neq m$. By taking a suitable  iterate of $f$ we 
can assume, without loss of generality, that $p \in \partial U_0 \cap \partial U_j$, for some $0 < j<n$.  
Our goal is to show, by contradiction, that $g(p)=p$. Observe that, once this has been proved, we will have that  $f^j(p)=p$ for some $1\leq j\leq n$ a divisor of $n$. To exclude the case $j=n$, and conclude the statement of the proposition, we notice that this would imply that $g$ 
has a fixed point in each boundary of the $U_j$'s, a contradiction since $g|{\overline{U_j}},\ j=0,\ldots, n-1$ 
is conformally conjugate to  $z \to z^2$ on $\overline{\bd}$ and this map has $z=1$ as its unique fixed point on the 
unit circle.

We first show that the orbit of $p$ under $g$ cannot be pre-fixed, that is  
we cannot have $g(g^{\ell}(p))=g^{\ell}(p)$ for some $\ell >0$. Assume otherwise. Since the doubling map has a unique fixed point and a unique 
preimage of it (different from itself) we conclude that $g^{\ell}(p)=\hat{R}_0(0)$ and $g^{\ell-1}(p)=\hat{R}_0(1/2)$.  
Applying $f$ we have that $f(g^{\ell}(p))=\hat{R}_1(0)$ and 
$f(g^{\ell-1}(p))=f(\hat{R}_0(1/2))=\hat{R}_1(0)$ which implies that $f(g^{\ell}(p))=f(g^{\ell -1}(p))$. 
On the other hand we have that $f(g^{\ell}(p))=f(g^{\ell -1}(p)) \in \partial U_1 \cap \partial U_{j+1}$ 
and has two different preimages $g^{\ell}(p)$ and $g^{\ell-1}(p)$ in $\partial U_{j}$ while
$f:\overline{U}_j \to \overline{U}_{j+1}$ has degree one.
To deal with the finite periodic case or the infinite or non-preperiodic case we split the proof in two cases.

\vglue 0.1truecm
\noindent {\bf Case 1 ($2j\ne n$)} 
\vglue 0.1truecm
If the orbit of $p$ under $g$ is periodic of period $2$, that is $\{p,g(p)\} \in \partial U_0\cap \partial U_j$ with $p\ne g(p)$ and
$g^2(p)=p$, according to the previous notation, it
corresponds to the preimage by the B\"ottcher map of the periodic orbit $\{1/3,2/3\}$ under the doubling map (there is a unique periodic orbit of period two). Applying $f^j$ we have that  the orbit 
$\{f^j(p),f^j\left(g(p)\right)\} \in \partial U_j\cap \partial U_{2j}$ also corresponds to the preimage by the B\"ottcher map  of the same 
periodic orbit $\{1/3,2/3\}$ under the doubling map. Hence these 
two cycles (lying in $\partial U_j$) $\{ p, g(p)\}$ and $\{f^j(p),f^j(g(p))\}$, are the same cycle. We remark that we do not know if $p=f^j(p)$ or 
$p=f^j(g(p))$, we only claim that, as a cycle, it is the same one.
By construction we know that $p \in \partial U_0 \cap \partial U_j \cap \partial U_{2j}$. Therefore from Lemma  \ref{lemma:three_dom} 
we obtain either $g'(p)=0$ or $g(p)=p$, a contradiction either way.

If the orbit of $p$ is either periodic of period higher than $2$ or infinite (hence non-preperiodic) we denote by  $\orb(p)=\{p,g(p),g^2(p),\ldots\}$ and $\orb(q)=\{q=f^j(p), g(q), g^2(q),\ldots\}$  the orbits of $p$ and $f^j(p)$ under $g$, respectively, in  $\partial U_0 \cap \partial U_j$. 
By assumption their cardinality is at least $3$. If $\calo(p) = \calo(q)$,  applying $f^j$, we would have that all points in  $\orb(p)$
would be points in the common boundary  of $U_0, U_j$ and $ U_{2j}$, which is a contradiction to 
Lemma \ref{lem:three_domains_1}(b), since the cardinality of the orbits is at least three. 

Thus  $\orb(p)\ne \orb(q)$  should be two different orbits in $\partial U_j$.  Let $\eta=\{\eta_1,\eta_2,\ldots\}$ and $\tau=\{\tau_1,\tau_2,\ldots\}$ be the projection of the two orbits to the unit circle using the B\"ottcher coordinates of $U_j$.  Using the combinatorial result given by 
Lemma \ref{lemma:notmix}  we conclude than these two orbits are mixed, i.e. there exists four indexes $a, b, c$ and 
$d$ such that $\eta_a<\tau_b<\eta_c<\tau_d$. Applying Lemma \ref{lem:three_domains_1} (a)-(b), respectively, to the points 
$\hat{R}_0(\eta_a)$ and $\hat{R}_0(\eta_c)$ in $\partial U_0 \cap \partial U_j$ and $\hat{R}_0(\tau_c)$ and
$\hat{R}_0(\tau_d)$ in $\partial U_0 \cap \partial U_{2j}$ we arrive at a contradiction. 
 
From the arguments above the preperiodic case is also not possible. So, if $2j\neq n$ then the only possible case is $g(p)=p$.

\vglue 0.1truecm
\noindent {\bf Case 2 ($2j=n$)} 
\vglue 0.1truecm

For the {\it symmetric} case $2j=n$ the arguments above do not hold since $U_{2j} = U_0$.
However there are two main ingredients that provide a contradiction.

On the one hand if  we walk along the boundary of $U_0$ starting at $p$, say counterclockwise, and we  find the points on  the orbit of $p$ in a certain order, then when we walk clockwise along  the boundary of $U_j$ starting at $p$  we  should find the points of its orbit in the same order. 
On the other hand, the map $f^j:U_j \to U_0$ is 1-to-1, extends to the boundary of $U_j$, and it sends $\overline{U_j}$ to $\overline{U_0}$ preserving orientation, that is,  it sends the arc of $U_j$ joining clockwise (respectively counterclockwise)  $a, b \in \partial U_0 \cap \partial U_j$  to the corresponding arc of $U_0$ joining clockwise (respectively counterclockwise)  $f^j(a), f^j(b) \in \partial U_0 \cap \partial U_j$. 
The latter condition  follows  since $f^j$ is a  holomorphic map such that $f^j(u_j)=u_0$ and sends rays and equipotentials defined in $U_j$ to rays and equipotentials defined in $U_0$. The following arguments which finish the proof of the proposition are direct 
consequences of these two remarks.

As before let  $\orb(p)=\{p,f^{2j}(p),f^{4j}(p)\ldots \}$ and $\orb(q)=\{q=f^{j}(p),f^{3j}(p),f^{5j}(p)\ldots \}$ be the orbits of $p$ and $f^j(p)$ under $g=f^n$, respectively. Notice that 
for all $\ell\geq 1$ we have $f^{2 j \ell}=g^\ell$. We assume $p\neq f^{2j}(p)$ and whish to arrive to a contradiction.
We first show that $f^{j}(p)\ne \{p,f^{2j}(p)\}$. If $f^j(p)=p$ we apply $f^j$ to both sides and we get $f^{2j}(p)=f^j(p)=p$, a contradiction. 
If $f^j(p)=f^{2j}(p)$ it is easy to get $f^{4j}(p)=f^{2j}(p)$, or equivalently, $g(g(p))=g(p)$. This would imply that $p$ is prefixed under $g$, 
a contradiction. Summarizing we have that  $\{p,f^{j}(p),f^{2j}(p)\}$ are three different points in $\partial U_0 \cap \partial U_j$.

Take $p \in \partial U_0 \cap \partial U_j$ and denote by $\theta_0^\ell:=\varphi_0(f^{j\ell}(p))\in \mathbb S^1$ and 
$\theta_j^\ell:=\varphi_j(f^{j\ell}(p))\in \mathbb S^1$, $\ell \geq 0$,
the angles projected  by the B\"{o}ttcher coordinates of $U_0$ and $U_j$ respectively. 
 For $i\geq 0$ and $s\in \{0,j\}$ let $\gamma_{s_\pm}^{i,i+1}$ denote the arc in the unit circle going from $\theta_s^i$ to $\theta_s^{i+1}$ clockwise $(+)$ or counterclockwise $(-)$. 
Without loss of generality we assume that 
$\theta_j^{2} \in \gamma_{j_+}^{0,1}$. If  $\theta_j^{2} \in \gamma_{j_-}^{0,1}$, the arguments are similar. 
Consequently  $\theta_0^{2} \in \gamma_{0_-}^{0,1}$.  The image under  $(\varphi_0 \circ f^j \circ \varphi_j^{-1})$  of $\gamma_{j_+}^{0,1}$ is $\gamma_{0_+}^{1,2}$. So 
$\theta_0^3\in \gamma_{0_+}^{1,2}$ and therefore  $\theta_j^3\in \gamma_{j_-}^{1,2}$. The image under $(\varphi_0 \circ f^j \circ \varphi_j^{-1})$ of 
$\gamma_{j_-}^{1,2}$ is $\gamma_{0_-}^{2,3}$. So 
$\theta_0^4\in  \gamma_{0_-}^{2,3}$ and therefore  $\theta_j^4\in \gamma_{j_+}^{2,3}$. Applying successively this process it follows  that $\{\theta_0^0,\theta_0^2,\theta_0^4,\ldots\}$ is an infinite monotone sequence of 
points in $\gamma_{0_-}^{0,1}$. Since these points correspond to an orbit under the doubling map, their limit can only be a fixed point. But the only fixed point of $\Theta$ is $\theta=0$ which is repelling, a contradiction.
\end{proof}

We have studied, in the previous proposition,  the intersections between  the boundaries of the Fatou components of a (super) attracting cycle for types C and D quadratic rational maps. For type C maps there is just one  such cycle; so the proposition above  already covers all possible intersections among boundaries of Fatou components. Indeed, any Fatou domain eventually maps to the cycle and hyperbolicity implies there are no critical points in the Julia set, hence if there are no intersections among the boundaries of Fatou components of the superattracting cycle, there are no intersections whatsoever. 

For type D maps the situation is quite different since the above arguments only apply to both superattracting cycles separately, but do not to possible intersections among components of different cycles. In fact the parameter plane contains open sets of parameters (small Mandelbrot sets) for which these contacts occur. The next result deals with these cases.
\begin{Proposition}\label{prop:typeD}
Let  $f \in \rat$  be a hyperbolic map of type $D$ and let $ m \geq n \geq 3  $.  We denote by $U_0, U_1, \cdots, U_{n-1}$ and $V_0, V_1, \cdots, V_{m-1}$  the Fatou components which form the two immediate basins of attraction of the two cycles. Assume that  $\partial U_{i_1}  \cap \partial U_{i_2}= \emptyset$ for $0\leq i_1 < i_2 \leq n-1$ and $\partial V_{j_1}  \cap \partial V_{j_2}= \emptyset$ for $0 \leq j_1 < j_2 \leq m-1$.   If $p \in \partial U_i \cap \partial V_j$ then
$n \mid m$. 
\end{Proposition}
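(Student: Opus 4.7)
The plan is a proof by contradiction modeled on Case 2 of Proposition \ref{prop:fixed_point_boundary}. Assume $n \nmid m$ and set $d := \gcd(n,m)$, $m' := m/d$, $n' := n/d$; then $m' \geq 2$ and $n' \geq 2$. After relabelling the two cycles we may take $p \in \partial U_0 \cap \partial V_0$. By Corollary \ref{cor:useful} every Fatou component is a Jordan domain, and the B\"ottcher maps $\varphi_0 : \overline{U_0} \to \overline{\bd}$ and $\psi_0 : \overline{V_0} \to \overline{\bd}$ conjugate $g := f^n$ on $\partial U_0$ and $h := f^m$ on $\partial V_0$ to the doubling map $\theta \mapsto 2\theta$.

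Next I would iterate to propagate the contact point. Since $g$ fixes $U_0$ setwise while shifting the $V$-cycle by $n \bmod m$, we have $g^k(p) \in \partial U_0 \cap \partial V_{kn \bmod m}$, and the disjointness hypothesis on the $\partial V_j$'s forces these iterates to be pairwise distinct and to exhaust $m'$ distinct $V$-components. Symmetrically, $h^k(p) \in \partial V_0 \cap \partial U_{km \bmod n}$ visits $n'$ distinct $U$-components; so both $\partial U_0$ meets more than one $V$-component, and $\partial V_0$ meets more than one $U$-component, through the orbit of $p$. The next step is to adapt the orientation bookkeeping of Proposition \ref{prop:fixed_point_boundary} Case 2: because $U_0$ and $V_0$ are disjoint Jordan domains touching $p$ from opposite sides, a counterclockwise walk along $\partial U_0$ from $p$ corresponds to a clockwise walk along $\partial V_0$ from $p$. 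Joining the (super)attracting periodic points inside $U_0$ and $V_0$ to successive orbit points by internal rays and invoking Lemma \ref{lem:three_domains_1}, one builds topological quadrilaterals whose complementary regions confine each new orbit point in a strictly shrinking subarc; alternating the roles of $g$ and $h$ produces an infinite strictly monotone sequence of B\"ottcher angles on $\partial U_0$, whose limit must then be a fixed point of doubling. But the unique such fixed point is repelling, a contradiction.

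The main obstacle is the two-sided bookkeeping. In Proposition \ref{prop:fixed_point_boundary} the two compared boundaries lie in a single Fatou cycle and are linked by the single orientation-preserving conformal isomorphism $f^j$, so all index shifts are uniform. Here the two boundaries lie in different cycles with periods $n$ and $m$, and the maps $g, h$ shift the opposite cycle by the mismatched amounts $n \bmod m$ and $m \bmod n$; one must carefully verify that the nested-arc construction survives both shifts simultaneously. In addition, the small-period exceptional orbits (period $1$ or $2$ of doubling on either $\partial U_0$ or $\partial V_0$, handled via Lemma \ref{lemma:notmix} and Lemma \ref{lemma:three_dom} applied to a suitable iterate) require separate treatment mirroring Case 1 of Proposition \ref{prop:fixed_point_boundary}, and it is precisely in treating these degenerate configurations that the divisibility $n \mid m$ emerges as the sharp compatibility condition.
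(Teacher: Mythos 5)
Your opening move --- propagating the contact point by iterating $f^{n}$ and using the hypothesis $\partial V_{j_1}\cap\partial V_{j_2}=\emptyset$ to force $\partial U_0$ to meet several distinct $\partial V_j$'s --- is exactly how the paper begins. The endgame, however, is where the proposal breaks down. The monotone-angle argument of Case 2 of Proposition \ref{prop:fixed_point_boundary} rests on two structural facts that are absent here: (i) all the relevant orbit points lie on the common boundary of one \emph{fixed} pair of Jordan domains ($U_0$ and $U_j$), and (ii) there is an iterate $f^{j}$ carrying $\overline{U_j}$ conformally onto $\overline{U_0}$, which is what forces ``counterclockwise order on one boundary equals clockwise order on the other'' and produces the nested arcs. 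In the type $D$ setting neither holds: no iterate of $f$ maps $V_0$ onto $U_0$, since each immediate basin is forward invariant under $f$, and the orbit $g^{k}(p)$ lands in $\partial U_0\cap\partial V_{kn \bmod m}$, i.e.\ on a \emph{different} $V$-component at each step, so there is no single second domain against which to run the two-sided bookkeeping. You flag this obstacle yourself but do not overcome it, and your closing claim that the divisibility $n\mid m$ ``emerges'' from the degenerate cases is asserted rather than shown. As it stands the proof is a plan whose decisive step is missing.

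For comparison, the paper's endgame is purely topological rather than dynamical: once the propagation step shows that \emph{every} $\partial U_i$ meets \emph{every} $\partial V_j$, it picks contact points $z_{ij}$, joins $u_0$ to $u_1$ by internal rays passing through $V_0$, $V_1$ and $V_2$, and observes that the two resulting quadrilaterals cut the sphere into three regions; $u_2$ lies in one of them and so cannot be joined by internal rays to all of $v_0,v_1,v_2$ --- in effect the non-planarity of $K_{3,3}$, using only the Jordan-domain accessibility already exploited in Lemma \ref{lem:three_domains_1}. Note that this construction needs $\partial U_i$ to meet at least three distinct $\partial V_j$'s, which the propagation yields when $nk\not\equiv 0\ (\bmod\ m)$ for all $1\le k\le m-1$, i.e.\ when $\gcd(n,m)=1$; your own (correct) observation that with $d=\gcd(n,m)>1$ the orbit only reaches $m/d$ of the $V$-components, possibly just two, shows that the weaker hypothesis $n\nmid m$ does not feed this construction either --- consistent with the fact that the application in Theorem A(b) assumes $\gcd(n,m)=1$. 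If you want to complete your argument, the honest routes are either to restrict to the coprime case and adopt the planarity contradiction, or to find a genuinely new mechanism for the case $1<d<n$.
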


\begin{proof}

We can assume that $f$ is critically finite, i.e.,  $f \in \per_n(0) \cap \per_m(0)$ (see Theorem \ref{th:Rees}). We label the Fatou components so that $U_0$ and $V_0$ contain the two critical points of $f$.  Assume  $f(U_i)=U_{i+1 \, (\bmod n) } $ and $f(V_i)=V_{i+1\, (\bmod m)}$. Since $f$ is of type $D$ and $n,m \geq 3$ we know that every  Fatou component is a Jordan domain (see Corollary \ref{cor:useful}) and the dynamics of $f^n:\overline{U_i} \to \overline{U_i}$  and $f^m:\overline{V}_j \to \overline{V}_j$ is conformally conjugate to $z \to z^2$ on the closed unit disc $\overline{\bd}$. We also denote by $u_0, u_1, \cdots, u_{n-1}$ and $v_0, v_1, \cdots, v_{m-1}$ the two superattracting cycles.

We suppose that $n \nmid m$, or equivalently we assume that $nk \neq 0 \, (\bmod \,m)$, for all $1 \leq k \leq m-1$. Let $p$ be a point in $\partial U_{i} \cap \partial V_{j}$ then 
\[ 
\begin{array}{rl}
p \in & \partial U_i \cap \partial V_{Êj \, (\bmod m)} \\
f^n(p) \in & \partial U_i \cap \partial V_{j+n \, (\bmod m) } \\
f^{2n}(p) \in & \partial U_i \cap \partial V_{j+2n \, (\bmod m) }\\
\cdots & \cdots \\
f^{n(m-1)}(p) \in & \partial U_i \cap \partial V_{j+(m-1)n \, (\bmod m)}. 
\end{array}
\]
\noindent On the one hand we have that if  $ \ell_1,\ell_2 \leq m-1$ then  $V_{j+\ell_1n \, ( \bmod m)} \neq V_{j+\ell_2n \, ( \bmod m)}$  since $nk \neq 0 \,( \bmod \, m)$ and on the other hand $f^{\ell_1n}(p) \neq f^{\ell_2n}(p)$ since by assumption $\partial V_{j+\ell_1n \, ( \bmod m) } \cap \partial V_{j+\ell_2n \, ( \bmod m)} =\emptyset$. From these two facts we have that  $\partial U_i$ has non-empty intersection with  $\partial V_0, \partial V_1,  \cdots, \hbox{Êand }\partial V_{m-1}$.  The same happens for the rest of  $\partial U_{i}$ for $0 \leq i \leq n-1$. In summary every Fatou component $U_i$  intersects all Fatou components $V_j$. We denote by $z_{ij}$ a point in the common boundary of $\partial U_i \cap \partial V_j$. We build a domain $\Omega_0$ such that the boundary of $\Omega_0$ is formed by several internal rays. The first one joins $u_0$ and $u_1$ passing through $V_0$ in the following way:  we connect  $u_0$, $z_{00}$, $v_0$, $z_{10}$ and $u_1$ using internal rays. The second one joins   $u_0$ to $u_1$ passing trough $V_1$ in the same fashion. We construct another domain $\Omega_1$. In this case the boundary of $\Omega_1$ is formed by two curves, the first one joining $u_0$ and $u_1$ passing trough $V_1$ and the second one joining $u_0$ to $u_1$ passing trough $V_2$. These divide the Riemann sphere into three domains $\Omega_0$, $\Omega_1$ and the complement of $\overline{\Omega_0 \cup \Omega_1}$. Now $u_2$ must belong to one of these three regions. Therefore it cannot be accessed through  internal rays by the three marked points $v_0$, $v_1$ and $v_2$.
\end{proof}

\subsection{Proofs of Theorems A and  B}\label{sec: proof_A_B}

The proof of Theorem A is a direct consequence of the results above. 

\begin{proof}[Conclusion of the proof of Theorem A]
Statement (a) follows directly from Proposition \ref{prop:fixed_point_boundary} while statement (b) follows from Proposition \ref{prop:typeD}.
\end{proof}

We finish this section with the proof of Theorem B.

\begin{proof}[Proof of Theorem B]
If  $f\in \per_n(\lambda)$ is of type A then, from Lemma \ref{lemma:sierpins}, $f$ has an attracting fixed point (the only attracting cycle). Hence 
Theorem \ref{th:Milnor} implies that the Julia set is totally disconnected. This proves (a). 

Observe that statement (b) is trivial for $n=1$ and it is a particular case of \cite{matings} for $n=2$. Hence we assume $n\geq 3$ and $f\in \per_n(0)$, (see Theorem \ref{th:Rees}). If $f$ is of type $B$ the free critical point  must belong to $U_i$ for some $i\ne 0$. So, $f$ has no superattracting fixed points and therefore each $U_i$ is a Jordan domain (see Corollary \ref{cor:useful}).
Observe that    $f^n:\overline{U_i} \to \overline{U_i},\ i =0,\ldots, n-1$ is
a  degree 4 map conjugate to $z\mapsto z^4$. Consequently $f^{n}\,|\,{\partial
  U_i}$ is conjugate to $\theta \mapsto 4 \theta$ on the unit circle $\bs^1
=\br / \bz$. Since the map is critically finite, every internal ray in $U_i$
lands at a well--defined point on $\partial U_i$, $i=0,\ldots n-1$.  It
follows that there are three fixed points of $f^n$ on $\partial U_i$, namely
$\gamma_i(0)$, $\gamma_i(1/3)$ and $\gamma_i(2/3)$, $i=0,\ldots, n-1$. By
construction each of these points is fixed under $f^n$, and so they are
periodic points of period $d$ for $f$ with  $d|n$. If one of them is periodic
of period $d<n$ then such a point must belong to $\partial U_i \cap \partial
U_j$ for some $i\neq j$ and so $\J$ cannot be a Sierpi\'{n}ski curve. So,
let us assume $d=n$ (the only case compatible with $\J$ being a Sierpi\'nski curve), 
and show that this is not possible if $n=3$ or $n=4$. 

If $d=n$, the $3n$ points involved in the construction form $3$ different cycles of period $n$ for $f$.  So $f$ would have,  globally, at least  $4$ cycles of period $n$ since each $f\in \per_n(0)$ has one (further) superattracting $n$ cycle. However a quadratic rational map has at most  $2$  cycles of period $n=3$, and  $3$ cycles of period  $n=4$ respectively, a contradiction. 

From Lemma \ref{lemma:sierpins} statement (c) reduces to consider the possible contact points among boundaries of Fatou components. From Theorem A(a), we immediately conclude that there are no contacts among the Fatou components of the unique attracting cycle of $f$. Finally, if $f$ is of type $C$, then any other Fatou component is a preimage of one of the components of the attracting cycle and since $f$ is hyperbolic there are no critical points in the Julia set. So, there are no possible contacts among boundaries whatsoever.

The proof of statement (d) follows immediately  from Theorem A(b).
\end{proof}

\section{The period three  slice. Proof of Theorem C }\label{sec:slice3}

In this section we restrict our attention to rational maps in $Per_3(0)$. This slice contains all the conformal conjugacy classes of maps in $\mathcal{M}_2$  with a periodic critical orbit of period three. Using a  suitable M\"obius transformation we can assume that one critical point is located at the origin, and the critical cycle is $0 \mapsto \infty \mapsto 1 \mapsto 0$. Such maps can
be written as  $(z-1)(z-a)/z^2$, and using this expression the other critical point is now located at $2a/ (a+1)$. We may change this parametrization of $Per_3(0)$ so that the critical point is  located at $a$, obtaining the following expression 
\begin{equation}\label{eq:function}
f_a(z)=\frac{(z-1)\left(z-\frac{a}{2-a} \right)}{z^2} \qquad \hbox{ where } a \in \bc \setminus \{ 0,2\}.
\end{equation}

We exclude the values $a=0$ ($f_0$ has degree 1) and $a=2$ ($f_2$ is not well defined).
As we mentioned before, $f_a$, for $a \in \bc \setminus \{0,2\}$, has a superattracting cycle $0 \mapsto \infty \mapsto 1 \mapsto 0$ and we denote by $U_0=U_0(a), U_{\infty}=U_{\infty}(a), U_{1}=U_{1}(a)$ the Fatou components containing the corresponding points of this superattracting cycle.  This map has two critical points, located at $c_1=0$ and $c_2(a)=a$, and the corresponding critical values are $v_1=\infty$ and $v_2(a)=-\frac{(1-a)^2}{a(2-a)}$.  Thus, the dynamical behaviour of  $f_a$ is determined by the orbit of the free critical point $c_2(a)=a$. The parameter $a-$plane has been thoroughly studied by M. Rees (\cite{Re3}) and we recall briefly some of its main properties. We parametrize the hyperbolic components of $Per_3(0)$ by the unit disc in the natural way.  For the Bitransitive and Capture components we use the well defined B\"ottcher map in a small neighbourhood of  each point of the critical cycle $\{0,\infty,1\}$ and for the Disjoint type components the multiplier of the attracting cycle different from $\{0,\infty,1\}$.

The first known result is the existence of  only two Bitranstitve components (\cite{Re3}) denoted by $B_1$ and $B_{\infty}$ and defined by 
\[
B_1 = \{ a \in \bc \, | \, a \in U_1(a)\} \, \hbox{ and } B_{\infty} = \{ a \in \bc \, | \, a \in U_{\infty}(a)\}.
\]

\noindent  $B_1$ is open, bounded, connected and simply connected and $B_{\infty}$ is open, unbounded, connected and simply connected in $\hat\bbC$.  In the next result we collect these and other  main known properties of the parameter plane (see Figures \ref{fig:slice3_dos} and \ref{fig:slice3_poly_rat}). 

\begin{Proposition}[\bf{Rees, \cite{Re3}}]  \label{Pro:Rees}
For $f_a(z)$ with $a \in \bc \setminus \{0,2\}$, the following conditions hold:
\begin{enumerate}
\item[(a)] The boundaries of $B_1$ and $B_{\infty}$ meet at three parameters $0, x$ and $ \bar{x}$ and the set $\bc \setminus (B_1 \cup B_{\infty} \cup \{0,x,\bar{x}\} ) $ has exactly three connected components: $\Omega_1, \Omega_2$ and $\Omega_3$.
\item[(b)] Each connected component $\Omega_i$, for $i=1,2,3$, contains a unique value $a_i$ such that 
$f_{a_i}$ is conformally conjugate to a polynomial map of degree 2. Moreover, each one of the three parameters $a_i$ is the center of a hyperbolic component $\Delta_i$ of period one. 
\item[(c)] Each parameter value, $0, x$ and $\bar{x}$, is the landing point of two fixed parameter rays, one  in $B_1$ and one in $B_{\infty}$.
\item[(d)] The parameter values $x$ and $\bar{x}$ correspond to parabolic maps having a fixed point with multiplier $e^{2 \pi i /3}$ and $e^{-2 \pi i /3}$, respectively.
\end{enumerate}
\end{Proposition}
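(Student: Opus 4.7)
Proposition \ref{Pro:Rees} is attributed to Rees \cite{Re3}; the authors invoke it without proof, so I will sketch the route I would follow. The plan is to combine explicit computation in this specific one-parameter family with the standard parameter-space machinery (B\"ottcher parametrizations, parameter rays, parabolic implosion) borrowed from the Douady--Hubbard study of the Mandelbrot set $\per_1(0)$ and Milnor's analysis of $\per_2(0)$. I would tackle the four items in dependency order: first locate the polynomial parameters, then parametrize $B_1, B_\infty$, then identify their shared boundary points via parameter rays, and finally assemble the topological picture.

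For (b), a short computation reduces the equation $f_a(a) = a$ to the cubic $a^3 - 3a^2 + 2a - 1 = 0$, whose discriminant is $-23 < 0$, giving one real and two complex conjugate roots $a_1, a_2, a_3$. At each such $a_i$, the free critical point is now a superattracting fixed point; after M\"obius conjugacy sending $a_i$ to $\infty$, the map becomes a quadratic \emph{polynomial} (since $\infty$ is then the only pole of $f_{a_i}$ and is a critical fixed point). The residual superattracting 3-cycle must then be the critical 3-cycle of some $z^2 + c_i$, where $c_i$ runs over the three centers of period-3 hyperbolic components of the Mandelbrot set (the ``airplane'' and its two complex conjugate rotates). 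By Theorem \ref{th:Rees} each $a_i$ is then the unique center of a hyperbolic component $\Delta_i$ of period one, uniformized by $\bd$ via the multiplier of the attracting fixed point.

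For (a) and (c), (d), I would parametrize $B_1$ (resp.\ $B_\infty$) by the B\"ottcher coordinate of the free critical point inside $U_1(a)$ (resp.\ $U_\infty(a)$), obtaining conformal isomorphisms with $\bd$ and hence simple connectedness. Boundedness of $B_1$ follows from the fact that $a \to \infty$ forces $a \notin U_1(a)$; simple connectedness of $B_\infty$ in $\hat\bc$ is analogous centered at $a=\infty$. Pulling back radial rays gives parameter rays in each bitransitive component, and a slice-adapted Douady--Hubbard landing theorem shows that the fixed parameter ray $\theta = 0$ and the two rays $\theta = 1/3,\ 2/3$ land at parabolic boundary parameters, yielding (c). The key combinatorial observation for (d) is that the internal first-return map at any common boundary point of $\partial B_1 \cap \partial B_\infty$ inherits a cyclic action of order dividing $3$ from the period of the attracting cycle, which forces its parabolic multiplier to be a cube root of unity; compatibility of co-landing from both components picks out exactly the three parameters $\{0, x, \bar x\}$ with multipliers $1, e^{2\pi i/3}, e^{-2\pi i/3}$ as claimed.

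To conclude (a), once $\partial B_1 \cap \partial B_\infty = \{0, x, \bar x\}$ is established, the closures $\overline{B_1}, \overline{B_\infty}$ form two closed topological disks meeting at exactly these three points, so their union is homeomorphic to a theta-graph in $\hat\bc$ whose complement has exactly three connected components $\Omega_1, \Omega_2, \Omega_3$; a winding/index argument tracking the position of the free critical orbit relative to the three arcs of the theta then distributes the polynomial parameters $a_i$ one per $\Omega_i$. The hardest step is precisely the identification $\partial B_1 \cap \partial B_\infty = \{0, x, \bar x\}$ with no extra coincidences: the upper bound requires combinatorial control over parameter rays, while the lower bound requires a quasiconformal surgery constructing each parabolic implosion from the prescribed combinatorics. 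This is the technical heart of Rees's argument, and any alternative proof must confront it through a detailed slice-adapted version of parameter-ray landing theorems together with puzzle techniques.
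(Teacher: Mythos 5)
The paper offers no proof of this statement: it is quoted from Rees \cite{Re3} and used as a black box, so there is no internal argument to measure your sketch against. Judged on its own terms, your proposal is a reasonable reconstruction of the standard route, and part (b) is essentially complete: the reduction of $f_a(a)=a$ to $a^3-3a^2+2a-1=0$, the discriminant $-23$, and the identification of the three roots with the airplane, rabbit and co-rabbit after a M\"obius change of variables are exactly the computations the authors themselves carry out later in Section 5 (where they exhibit $\tau$, $P_i$ and $Q_i(z)=z^2-a_i^2(a_i-2)$ explicitly). Your choice of the fixed angles $0,1/3,2/3$ for the parameter rays is also consistent with the dynamical picture the paper uses elsewhere: for a bitransitive map the first return to $U_1$ has degree $4$ on the boundary, so the fixed rays are precisely those of angles $0,1/3,2/3$.

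That said, for (a), (c) and (d) what you have written is a programme, not a proof, and you say so yourself. The statements that $B_1$ and $B_\infty$ are conformally parametrized by the B\"ottcher position of the free critical point, that the three fixed parameter rays in each component land, that they co-land in pairs, and above all that $\partial B_1\cap\partial B_\infty$ contains \emph{no} points other than $0,x,\bar x$, are each nontrivial theorems in Rees's memoir; your sentence ``compatibility of co-landing from both components picks out exactly the three parameters'' is a placeholder for the entire combinatorial argument rather than an argument. One further small inaccuracy: you assign multiplier $1$ to the parameter $a=0$, but $a=0$ is a degenerate parameter at which $f_a$ drops to degree one (the paper excludes it from the family for exactly this reason), and item (d) deliberately makes a multiplier claim only for $x$ and $\bar x$; the paper's own numerics treat $0$ as a degenerate solution of the $e^{\pm 2\pi i/3}$ parabolic equation, not as a multiplier-$1$ parabolic. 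None of this undermines the paper, which legitimately cites Rees, but your proposal should not be read as a self-contained proof of the proposition.
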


In Figure \ref{fig:slice3_dos} we plot the $a-$parameter plane. In this picture we label the two hyperbolic components $B_1$ and $B_{\infty}$ of  {\it Bitransitive} type and the cutting points $0,x $ and $\bar{x}$ that separate this parameter plane into three different zones: $\Omega_1, \Omega_2$ and $\Omega_3$. Each zone contains a unique parameter $a$ such that $f_a$  is conformally conjugate to a quadratic polynomial. We will show that  these three parameter values are $a_1, a_2$ and $\overline{a_2}$ (plotted with a small black circle), which correspond to the airplane, the rabbit and the co-rabbit, respectively. 

\begin{figure}[ht]
	\centering
	\includegraphics[width=0.5\textwidth]{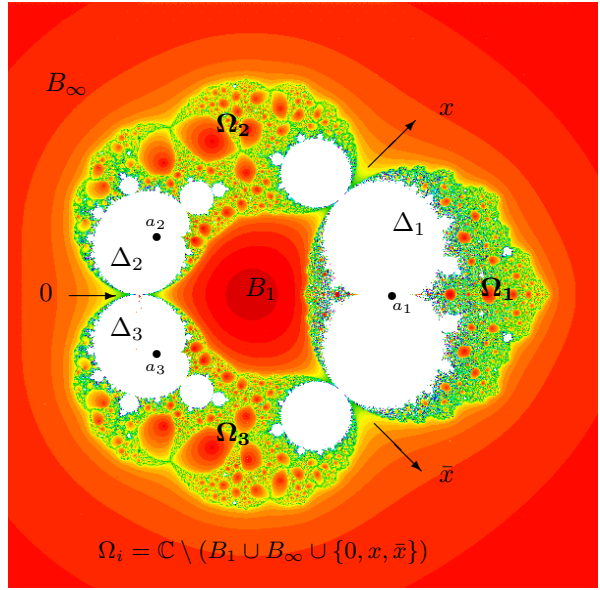}
	\setlength{\unitlength}{220pt}
         \put(-0.75,0.6){\circle*{0.015}}
         \put(-0.75,0.4){\circle*{0.015}}
         \put(-0.35,0.5){\circle*{0.015}}
         \put(-0.83,0.55){\small $\Delta_2$}
         \put(-0.83,0.43){\small $\Delta_3$}
         \put(-0.35,0.61){\small $\Delta_1$}
         \put(-0.35,0.475){\tiny $a_1$}
         \put(-0.77,0.37){\tiny $a_3$}
         \put(-0.77,0.62){\tiny $a_2$}
	\put(-0.6,0.5){\small $B_1$}
	\put(-0.94,0.85){\small $B_\infty$}
	\put(-0.65,0.78){\small $\bf\Omega_2$}
	\put(-0.65,0.25){\small $\bf\Omega_3$}
	\put(-0.2,0.5){\small $\bf \Omega_1$}
	\put(-0.90,0.5){\vector(1,0){0.08}}
	\put(-0.95,0.49){\small $0$}
	\put(-0.39,0.72){\vector(1,1){0.08}}
	\put(-0.27,0.81){\small $x$}
         \put(-0.38,0.28){\vector(1,-1){0.08}}
	\put(-0.27,0.18){\small $\bar{x}$}
	\put(-0.85,0.05){\footnotesize $\Omega_i=\bc \setminus \left( B_1\cup B_\infty \cup \{ 0,x,\bar{x} \}  \right)$}	
	  \caption{\small{The slice $Per_3(0)$.}} 
	\label{fig:slice3_dos}
\end{figure}

\begin{figure}[hbt]
    \centering
    \subfigure[\scriptsize{The Douady rabbit. The Julia set  of $z^2-0.122561+0.744861i$.}  ]{
     \includegraphics[width=0.32\textwidth]{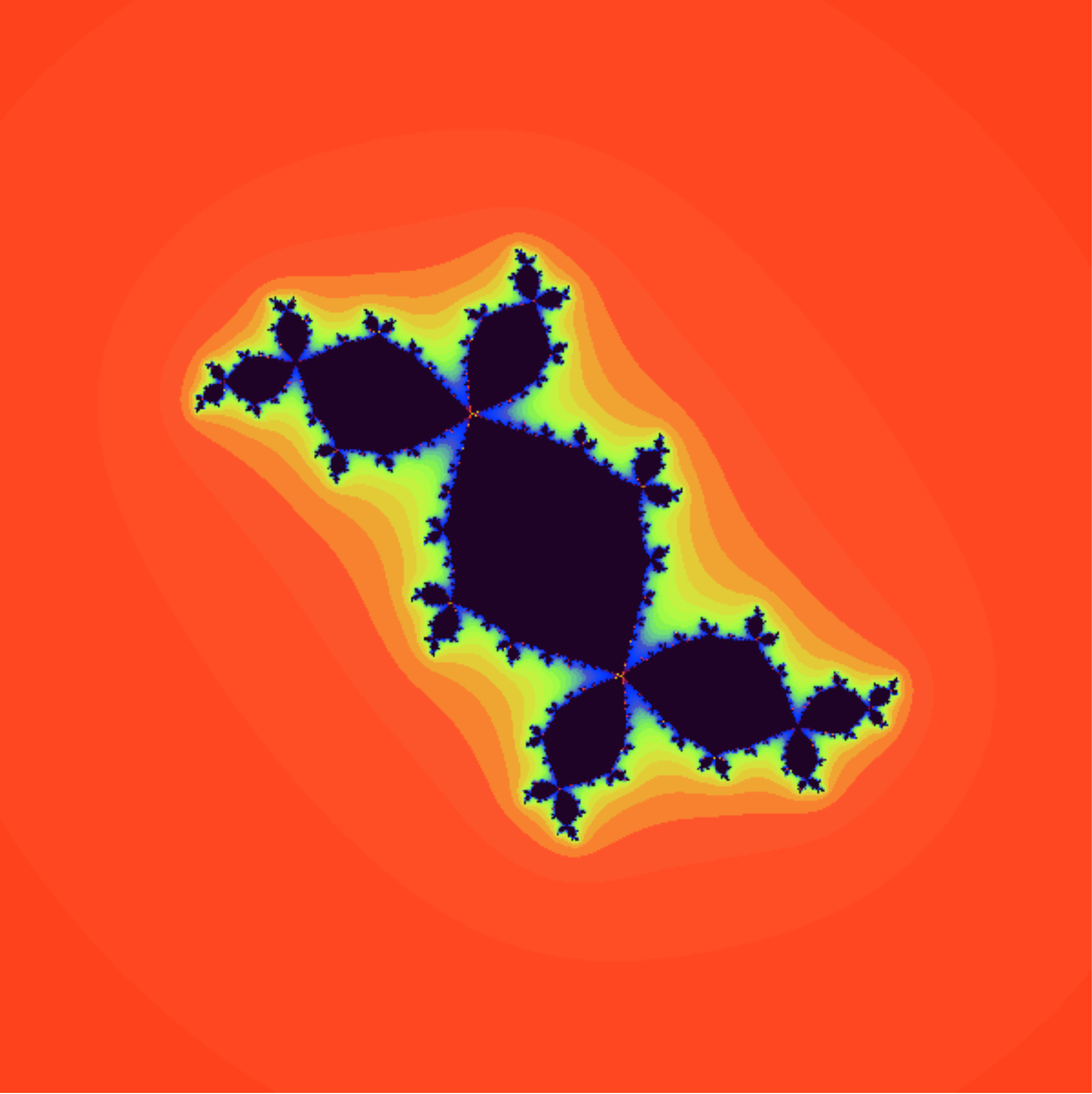}}
    \subfigure[\scriptsize{The Douady co--rabbit. The Julia set  of $z^2-0.122561-0.744861i$.}  ]{
        \includegraphics[width=0.32\textwidth]{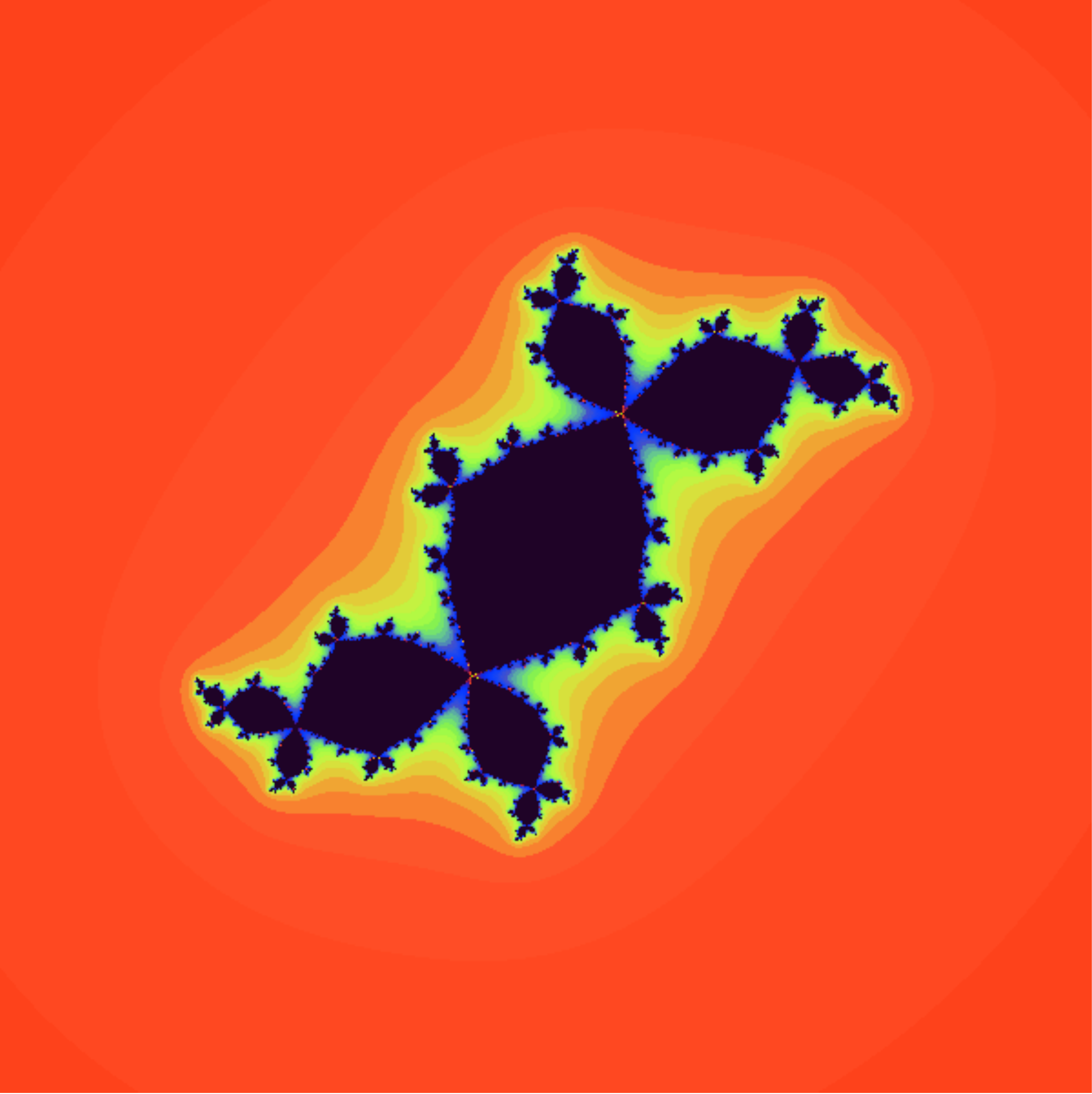}}
    \subfigure[\scriptsize{The airplane. The Julia set of $z^2-1.75488$.}  ]{
     \includegraphics[width=0.32\textwidth]{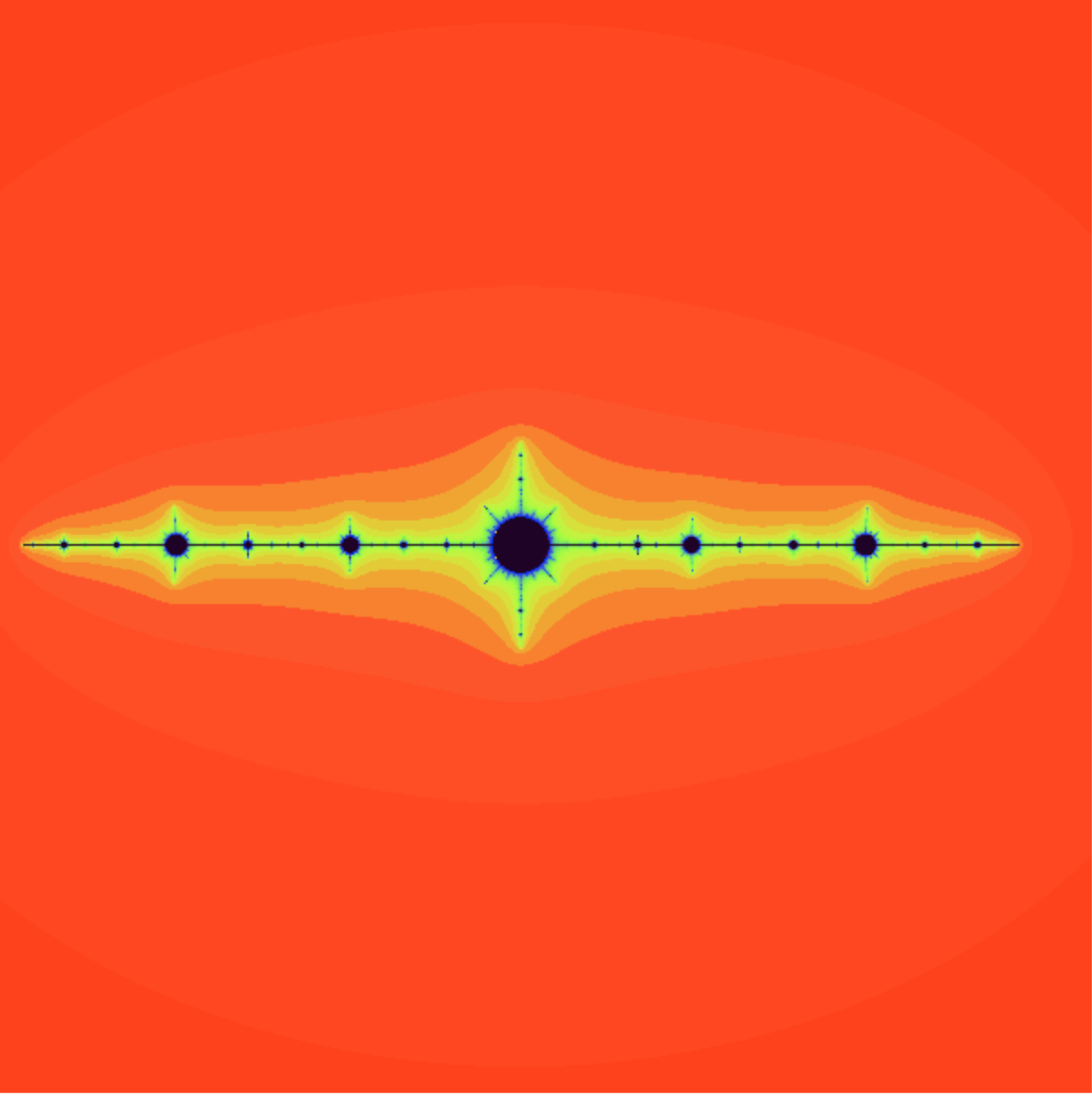}}
    \subfigure[\scriptsize{Julia set of  $f_{0.33764+0.56228i}$, in $Per_3(0)$,   conjugate to the Douady rabbit.}  ]{
    \includegraphics[width=0.32\textwidth]{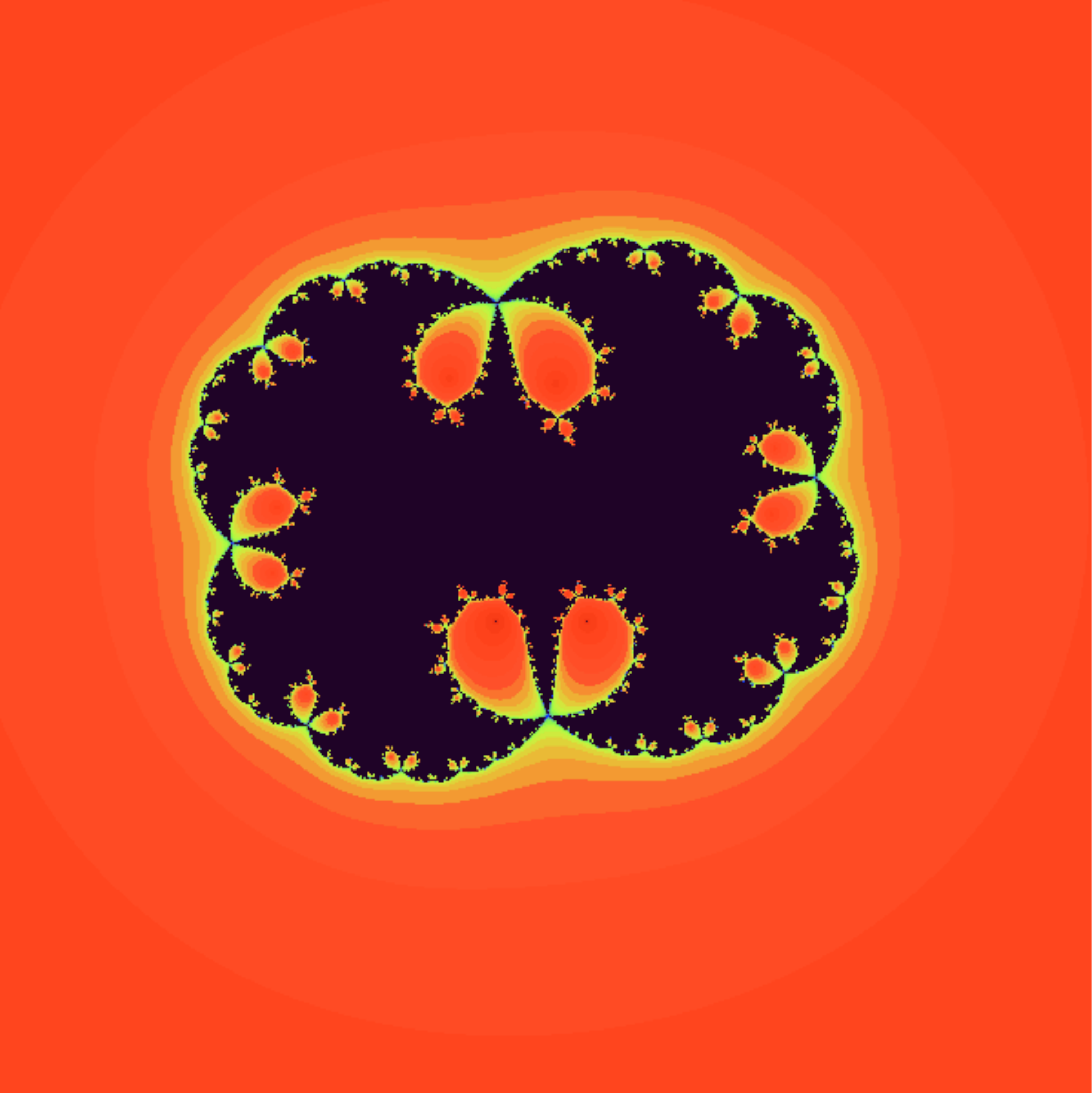}}
    \subfigure[\scriptsize{Julia set of $f_{0.33764-0.56228i}$, in $Per_3(0)$, conjugate to the Douady co-rabbit.}  ]{
        \includegraphics[width=0.32\textwidth]{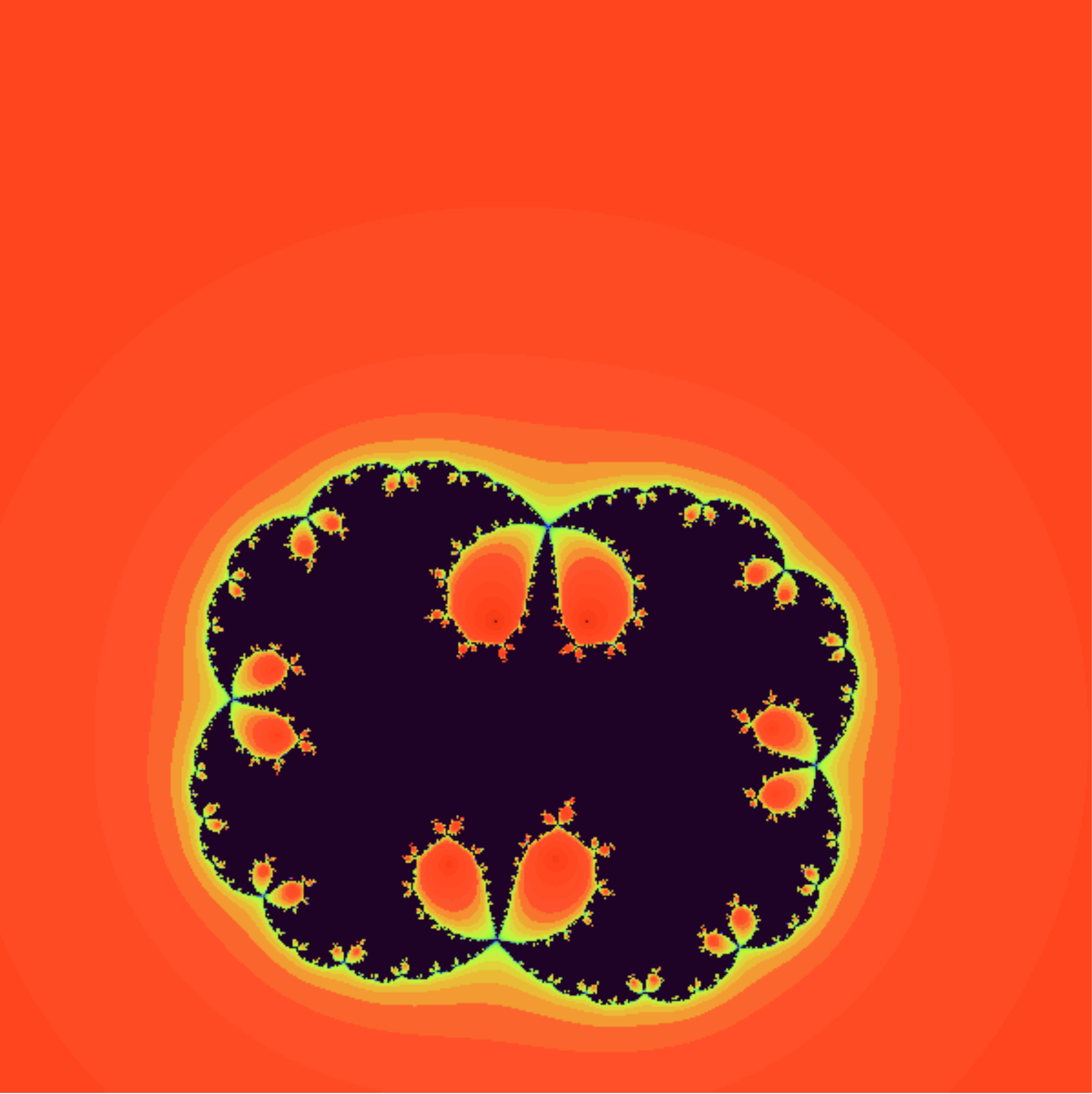}}
    \subfigure[\scriptsize{Julia set of $f_{2.32472}$, in $Per_3(0)$, conjugate to the airplane.}  ]{
   \includegraphics[width=0.32\textwidth]{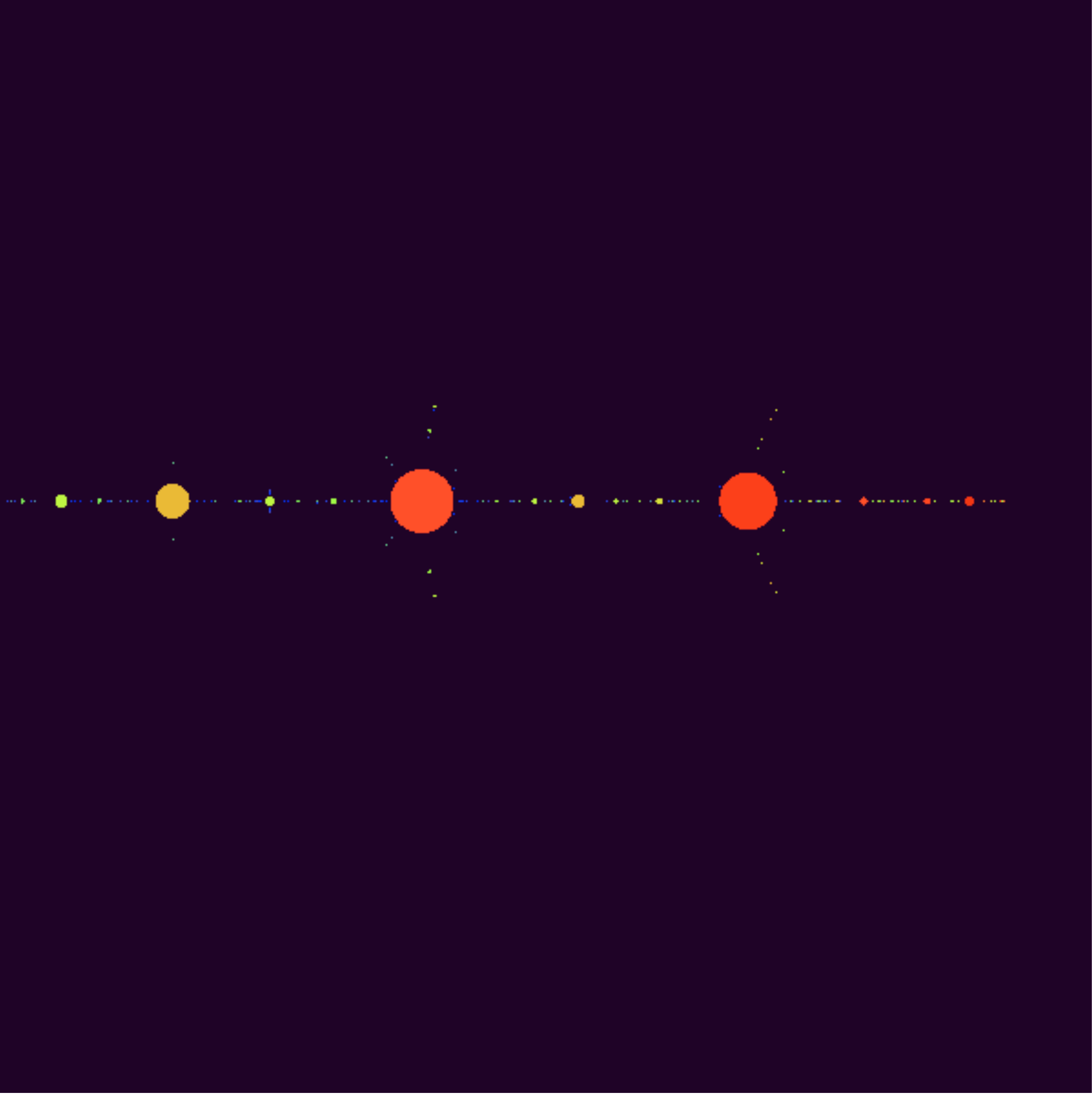}}
    \caption{\small{ We plot the three unique monic, quadratic,  centered polynomial  having a superattracting 3-cycle:  the  rabbit,  the co-rabbit and the airplane, and the three corresponding rational maps $f_a$ that are conformally conjugate to a quadratic polynomial.}}    
    \label{fig:slice3_poly_rat}
\end{figure}

We can find explicitly   the values  of $x$ and $\bar{x}$ and the quadratic polynomial $f_{a_i}$, for $i=1,2, 3$. 
First, we calculate  the three parameters $a_1, a_2$ and $a_3$ such that the corresponding quadratic rational map $f_{a_i}$ is conformally conjugate to a quadratic polynomial. This can happen if and only if the free critical point $c_2(a)=a$ is a superattracting fixed point. This superattracting fixed point plays the role of $\infty$ for the quadratic polynomial.  This condition says that the corresponding critical value $v_2(a)$ coincides with the critical point $c_2(a)$, or equivalently
\[
v_2(a)=-\frac{(1-a)^2}{a(2-a)} = a
\]
\noindent which yields
\[
a^3-3a^2+2a-1=0.
\]
The above equation has one real solution $a_1 \approx 2.32472$ and two complex conjugate solutions $a_2 \approx 0.33764+0.56228i $ and $a_3\approx 0.33764-0.56228i$. Notice that there are only three monic and centered quadratic polynomials of the form $z^2 + c$ with a 3-critical cycle. These three polynomials are the airplane $z^2-1.7588$, the rabbit 
$z^2  -0.122561+0.744861i$ and the co-rabbit $z^2  -0.122561-0.744861i$. We claim that $f_{a_1}$  is conformally conjugate  to the airplane, $f_{a_2}$ to the rabbit and $f_{\overline{a_2}}$ to the co-rabbit. To see this we define the map 
\[
\tau(z)=\frac{ 1 }{z-a_i} + \frac{1}{a_i}
\]
\noindent and then $P_i:=\tau \circ f_{a_i} \circ \tau^{-1}$ is a centered  quadratic polynomial, since $\infty$ is a superattracing fixed point and $z=0$ is the unique finite critical point. Easy computations show that
\[
P_i(z)=\frac{1}{a_i}-a_i^3(a_i-2)z^2.
\]
Finally, after conjugation with the affine map $\sigma(z)=-a_i^3(a_i-2)z$, the corresponding quadratic polynomial $Q_i:=\sigma \circ P_i \circ \sigma^{-1}$ is given by 
\[
Q_i(z)=z^2-a_i^2(a_i-2),
\]
\noindent which coincides with the airplane for $i=1$, the rabbit for $i=2$
and the co-rabbit for $i=3$.  We call $a_1$ the {\it airplane parameter}, $a_2$ the {\it rabbit parameter} and $\overline{a_2}$ the {\it co-rabbit parameter}.  Likewise, we call $\Omega_1$  the {\it airplane piece} since it contains the airplane parameter $a_1$,  $\Omega_2$ the {\it rabbit piece} since it contains the rabbit parameter and $\Omega_3$ the {\it co-rabbit piece} since it contains the  co-rabbit parameter.

In the next proposition we show another  property of the cutting parameter values $x$ and $\bar{x}$, that will be important in order 
to determine their values.
 
 \begin{Proposition}\label{lem:delta1}
Let $\Delta_i$ be the hyperbolic  component containing $a_i$ (so that $\Delta_i\subset \Omega_i$), $i=1,2,3$. Then, the cutting parameter values $x$ and $\bar{x}$ in Proposition \ref{Pro:Rees} belong to the boundary of $\Delta_1$,  and not to  the boundary of $\Delta_2$ and $\Delta_3$.
 \end{Proposition}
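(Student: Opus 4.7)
My plan combines conjugation symmetry, the topology of the partition into the $\Omega_i$, and the multiplier map on the period-$1$ hyperbolic components $\Delta_i$.

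First, I would observe that $\overline{f_a(z)} = f_{\bar a}(\bar z)$, so that complex conjugation $a \mapsto \bar a$ is a global symmetry of the parameter plane of $\per_3(0)$. Since $a_1 \in \bbR$ while $a_3 = \overline{a_2}$, this gives $\Delta_1 = \overline{\Delta_1}$ and $\Delta_3 = \overline{\Delta_2}$. By Proposition \ref{Pro:Rees}(d), the multipliers $e^{\pm 2\pi i/3}$ at $x$ and $\bar x$ are complex conjugate, so $x$ and $\bar x$ are themselves complex-conjugate parameters. It therefore suffices to prove $x \in \partial \Delta_1$ and $x \notin \partial \Delta_2$; the remaining assertions follow by conjugation.

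Next, I would analyze the topology of the partition. The two open Jordan regions $B_1, B_\infty$ meet precisely at $\{0, x, \bar x\}$, and a simple Euler-characteristic count shows that each $\Omega_i$ is bounded by one arc of $\partial B_1$, one arc of $\partial B_\infty$, and exactly two of the three cutting points. Since $\Omega_1$ must be conjugation-symmetric (it contains $a_1 \in \bbR$), the two cutting points on $\partial \Omega_1$ must be conjugate to each other, forcing $\partial \Omega_1 \cap \{0, x, \bar x\} = \{x, \bar x\}$. Symmetrically, $\partial \Omega_2 \cap \{0, x, \bar x\} = \{0, x\}$ and $\partial \Omega_3 \cap \{0, x, \bar x\} = \{0, \bar x\}$. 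Because $\Delta_i \subset \Omega_i$, this immediately yields $\bar x \notin \partial \Delta_2$ and $x \notin \partial \Delta_3$, leaving only the case $x \in \partial \Delta_2$ to be excluded.

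The heart of the argument is the multiplier map. Each $\Delta_i$ is a period-$1$ hyperbolic component, so the multiplier of the continuation of the superattracting fixed point $a_i$ defines a biholomorphism $\mu_i : \Delta_i \to \bd$ that extends continuously to $\partial \Delta_i \to \bs^1$. This yields parabolic parameters $y_i \in \partial \Delta_i$ with $\mu_i(y_i) = e^{2\pi i/3}$, and by the symmetry of $\Delta_1$, $\overline{y_1} \in \partial \Delta_1$ is also parabolic. I would then study the parabolic implosion at each $y_i$: the parabolic fixed point has three petals cyclically permuted by $f_{y_i}$, with the free critical point $c_2 = y_i$ in the immediate basin. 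Perturbing out of $\Delta_i$, the fixed point becomes repelling and a new attracting $3$-cycle is born from the petals. For $i = 1$, the geometric proximity of $\Delta_1 \subset \Omega_1$ to both $B_1$ and $B_\infty$ (which share $x$ and $\bar x$ on $\overline{\Omega_1}$) forces the new cycle to coincide with the persistent superattracting cycle $\{0, \infty, 1\}$, capturing $c_2$ into $U_1$ or $U_\infty$; consequently $y_1 \in \partial B_1 \cap \partial B_\infty$ is a cutting parameter, and since $a = 0$ is degenerate, $y_1 = x$. For $i = 2$, the implosion at $y_2$ instead produces a new disjoint $(3,3)$-type hyperbolic component (since $\Omega_2$ meets $B_1, B_\infty$ at only the single cutting point $x$, there is no symmetric room to capture $c_2$ into the persistent $3$-cycle); hence $y_2 \notin \partial B_1 \cap \partial B_\infty$, so $y_2$ is not a cutting parameter and $y_2 \neq x$.

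The main obstacle will be rigorously justifying this implosion dichotomy, namely showing that the new $3$-cycle at $\partial \Delta_1$ \emph{must} coincide with $\{0,\infty,1\}$ while the one at $\partial \Delta_2$ \emph{must not}. A cleaner computational fallback, which I would adopt if the implosion analysis becomes delicate, is to solve the system $f_a(z) = z$, $f_a'(z) = e^{2\pi i/3}$ directly: after clearing denominators and eliminating $z$ by resultants, one obtains a low-degree polynomial equation in $a$ whose roots give explicit candidate values for $x$ and $\bar x$, which can then be matched against the fixed-point continuations from $a_1$ and separated from those emanating from $a_2, a_3$ either analytically, by monodromy along a path in $\Delta_i$, or numerically.
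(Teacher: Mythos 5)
Your setup --- the conjugation symmetry $\overline{f_a(\bar z)}=f_{\bar a}(z)$, the reduction to showing $x\in\partial\Delta_1$ and $x\notin\partial\Delta_2$, and the identification of the relevant boundary points as the $1/3$-bifurcation parameters $y_i\in\partial\Delta_i$ --- is sound and broadly consistent with the paper. But the heart of the argument, your ``implosion dichotomy,'' is asserted rather than proved, and you acknowledge as much. ``Geometric proximity of $\Delta_1$ to $B_1$ and $B_\infty$'' and ``no symmetric room'' are not arguments, and the mechanism you describe is not correct as stated: the attracting $3$-cycle born at a $1/3$-bifurcation can never ``coincide with the persistent superattracting cycle $\{0,\infty,1\}$,'' since that cycle is superattracting for every $a$ and is never created or destroyed. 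What actually has to be verified is which of the $y_i$ satisfy the dynamical characterization of the cutting points, namely that the parabolic fixed point (of multiplier $e^{\pm 2\pi i/3}$) lies on $\partial U_0\cap\partial U_1\cap\partial U_\infty$; this property holds at $x,\bar x$ because they lie on $\partial B_1\cap\partial B_\infty$.

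The paper closes exactly this gap by locating the unique non-critical period-$3$ cycle, i.e.\ the cycle that collides with the fixed point $p(a)$ at the $1/3$-bifurcation. For $a\in\Delta_i$ the map $f_a^3\colon\overline{U_0}\to\overline{U_0}$ is conjugate to $z\mapsto z^2$, so $\partial U_0$ carries a unique fixed point $z_0(a)$ of $f_a^3$. For the airplane parameter $a_1$ one has $\partial U_0\cap\partial U_1\cap\partial U_\infty=\emptyset$, so $z_0(a_1)$ cannot be fixed by $f_{a_1}$ and is therefore a genuine $3$-cycle lying on $\partial U_0\cup\partial U_1\cup\partial U_\infty$; this configuration persists throughout $\Delta_1$, and when that cycle merges with $p(a)$ the limiting parabolic point lies on all three boundaries, so the $1/3$-bifurcation parameters of $\Delta_1$ are precisely $x$ and $\bar x$. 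For the rabbit and co-rabbit, $z_0(a_i)$ is instead the $\alpha$-fixed point, so the non-critical $3$-cycle lies off $\partial U_0\cup\partial U_1\cup\partial U_\infty$, the merger produces a parabolic point off the triple intersection, and the bifurcation parameters of $\Delta_2,\Delta_3$ cannot be $x$ or $\bar x$. Your computational fallback (resultants plus numerics or monodromy) could in principle substitute for this, but the ``matching'' step you defer is precisely the content of the proposition, so as written the proposal does not contain a proof.
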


\begin{proof}
When a parameter $a$ belongs to any of the $\Delta_i, \ i=1,2,3$, the corresponding dynamical plane exhibits a fixed basin of an attracting fixed  point denoted, in what follows, by $p(a)$. From Proposition \ref{Pro:Rees} we know that  $f_{x}$ (respectively $f_{\bar{x}}$) has a parabolic fixed point, $p(x)$ (respectively $p(\bar{x})$), with multiplier $e^{2 \pi i /3}$ (respectively $e^{-2 \pi i /3}$). Thus $x$ and $\bar{x}$ must belong to 
  $\partial \Delta_1$, $\partial \Delta_2$, or $\partial \Delta_3$. Moreover since $x$ and $\bar{x}$ also belong to  $\partial  B_1$ (and $\partial  B_{\infty}$), the dynamical planes for $f_{x}$ and $f_{\bar{x}}$ are such that  $p(x)$ and $p(\bar{x})$  belong to $\partial U_0 \cap \partial  U_1  \cap \partial U_{\infty}$. These are the two conditions defining the parameters $x$ and $\bar{x}$ (see Figure \ref{fig:slice3_dos}). 
  
When the parameter $a$,  belonging to any of the $\Delta_i, \ i=1,2,3$, crosses the  boundary of its hyperbolic component  through its $1/3$--bifurcation point, the attracting fixed point $p(a)$ becomes a parabolic fixed point  of multiplier either $e^{2 \pi i /3}$ or $e^{-2 \pi i /3}$ since, at this precise parameter value, the attracting fixed point coalesces with a repelling periodic orbit of period three. 

Since $f_a, \ a\in\bc$, is a quadratic rational map, it has only two 3-cycles and, because we are in $Per_3(0)$, one of them is the critical cycle  $\{0,\infty,1\}$. So, the repelling periodic orbit which coalesces with $p(a)$ at the 1/3--bifurcation parameter  must be the unique repelling 3-cycle existing for this parameter. 

We now investigate the location of this repelling 3-cycle for parameters in each of the hyperbolic components  $\Delta_1, \Delta_2$ and $\Delta_3$. To do so, we note that if $a$ is any parameter
in $\Delta_i$,  we have that $f_a^3: \overline{U_0} \mapsto \overline{U_0}$ is conjugate to the map $z \mapsto z^2$ in the closed  unit disc. Thus, there exists a unique point $z_0(a)\in \partial U_0$ such that $f_a^3\left(z_0(a)\right)=z_0(a)$. This fixed point could be either  a (repelling) fixed point for $f_a$ or a (repelling) 3-cycle of  $f_a$.

It is clear that for $a=a_1$ the point  $z_0\left(a_1\right)$ is a repelling 3-cycle, since, for the airplane,  $\partial U_0 \cap \partial U_\infty \cap \partial U_1$ is empty. So, this configuration remains for all parameters in $\Delta_1$ (the hyperbolic component  containing the airplane parameter). At the 
1/3--bifurcation points of $\Delta_1$, the repelling periodic orbit $\{z_0(a),f\left(z_0(a)\right),f^2\left(z_0(a)\right) \}$) coalesces with $p(a)$ (the attracting fixed point), and this collision must happen  in $\partial U_0 \cap \partial U_\infty \cap \partial U_1$.  So the 1/3--bifurcation parameters of $\Delta_1$ are precisely the parameter values $a=x$ and $a=\bar{x}$, and so, $p(a)$ becomes either $p(x)$ or $p(\bar{x})$, respectively.

On the other hand for $a=a_i,\ i=2,3$ the point  $z_0\left(a_i\right)$ is a fixed point (since for the rabbit and co-rabbit  $\partial U_0 \cap \partial U_\infty \cap \partial U_1$ is precisely  $z_0\left(a_i\right)$). As before this configuration remains for all parameters in $\Delta_i, \ i=2,3$ (the hyperbolic components  containing the rabbit and co-rabbit, respectively). Therefore,  at the 1/3--bifurcation point of $\Delta_i,\ i=2,3$, the fixed point $p(a)$ coalesces with the repelling periodic orbit but this collision does not happen  in  $\partial U_0 \cup \partial U_\infty \cup \partial U_{1}$ since the repelling periodic orbit of period three does not belong to $\partial U_0 \cup \partial U_\infty \cup \partial U_{1}$. Consequently the resulting parabolic point is not in $\partial U_0 \cap \partial U_\infty \cap \partial U_{1}$ and the 1/3--bifurcation parameter can neither be $x$ nor $\bar{x}$.
\end{proof}

Doing easy numerical computations we get that there are five parameter values having a parabolic fixed point with multiplier  $e^{2 \pi i /3}$ or $e^{-2 \pi i /3}$. These are
\[
0, \quad  1.84445\ldots \pm i 0.893455\ldots,   \quad   0.441264\ldots \pm i 0.59116\ldots.    
\]
It is easy to show that $x \approx 1.84445 + 0.893455i$ (and so, $\bar{x} \approx 1.84445 - 0.893455i$). Thus the parameters $0.441264 \pm 0.59116i$ corresponds to the 1/3--bifurcations of $\Delta_2$ and $\Delta_3$, respectively.

Now we are ready to  prove Theorem C.

\begin{proof}[Proof of Theorem C]\

\begin{itemize}
\item[(a)] Assume $a\in \left( B_1 \cup B_\infty \right)$. From Theorem \ref{th:Rees} we know that $B_1$ has a unique
center at $a=1$. Likewise,  $a=\infty$ is the unique center of $B_\infty$.  In either case the corresponding map $f_{a_0}$ is a critically finite hyperbolic map in $Per_3(0)$ of type $B$. Thus, from Theorem B (b) $\mathcal J\left(f_{a_0}\right)$ is not a Sierpi\'{n}ski curve. We conclude that $\mathcal J\left(f_a\right)$ is not a Sierpi\'{n}ski curve either, since all Julia sets 
in the same hyperbolic component are homeomorphic.

\item[(b)] Assume $a\in \Omega_2$ (here we do not restrict to a hyperbolic
  parameter). From the previous proposition we know that there exists a fixed
  point $z_0(a)\in \partial U_0 \cap \partial U_\infty \cap \partial U_{1}$ and this fixed point is the natural continuation of  $z_0\left(a_2\right)$ which cannot bifurcate until $a=x \in \Delta_1$. The case $a \in \Omega_3$ is similar.

\item [(c)]Finally we  assume $a \in \mathcal H$ where $\mathcal H$ is a hyperbolic component of type $C$ in  $\Omega_1$. We know that $\Omega_1$ contains the airplane polynomial for which $\partial U_0 \cap \partial U_\infty \cap \partial U_{1}=\emptyset$. This configuration cannot change unless the period 3 repelling cycle coalesces with a fixed point, which 
only happens at $a=x$ or $a=\bar{x}$. Hence the intersection is empty for all parameters in $\Omega_1$. It follows from
Theorem B (c) that this is the precise condition for $\mathcal J(f_a)$ to be a Sierpi\'{n}ski curve.

\item[(d)] This case a direct application of Theorem B (d).
\end{itemize}
\end{proof}

\bibliography{sierp}

\end{document}